\newcommand\reallywidehat[1]{%
\savestack{\tmpbox}{\stretchto{%
  \scaleto{%
    \scalerel*[\widthof{\ensuremath{#1}}]{\kern-.6pt\bigwedge\kern-.6pt}%
    {\rule[-\textheight/2]{1ex}{\textheight}}
  }{\textheight}%
}{0.5ex}}%
\stackon[1pt]{#1}{\tmpbox}%
}
\def\tilde{\widetilde}
\def\hat{\widehat}
\renewcommand\bar{\overline}
\newcommand*{\da@rightarrow}{\mathchar"0\hexnumber@\symAMSa 4B }
\newcommand*{\da@leftarrow}{\mathchar"0\hexnumber@\symAMSa 4C }
\newcommand*{\xdashrightarrow}[2][]{%
  \mathrel{%
    \mathpalette{\da@xarrow{#1}{#2}{}\da@rightarrow{\,}{}}{}%
  }%
}
\newcommand{\xdashleftarrow}[2][]{%
  \mathrel{%
    \mathpalette{\da@xarrow{#1}{#2}\da@leftarrow{}{}{\,}}{}%
  }%
}
\newcommand*{\da@xarrow}[7]{%
  \sbox0{$\ifx#7\scriptstyle\scriptscriptstyle\else\scriptstyle\fi#5#1#6\m@th$}%
  \sbox2{$\ifx#7\scriptstyle\scriptscriptstyle\else\scriptstyle\fi#5#2#6\m@th$}%
  \sbox4{$#7\dabar@\m@th$}%
  \dimen@=\wd0 %
  \ifdim\wd2 >\dimen@
    \dimen@=\wd2 %
  \fi
  \count@=2 %
  \def\da@bars{\dabar@\dabar@}%
  \@whiledim\count@\wd4<\dimen@\do{%
    \advance\count@\@ne
    \expandafter\def\expandafter\da@bars\expandafter{%
      \da@bars
      \dabar@ 
    }%
  }%
  \mathrel{#3}%
  \mathrel{%
    \mathop{\da@bars}\limits
    \ifx\\#1\\%
    \else
      _{\copy0}%
    \fi
    \ifx\\#2\\%
    \else
      ^{\copy2}%
    \fi
  }%
  \mathrel{#4}%
}
\def\fm{{\mathfrak m}}
\def\CC{{\mathbb C}}
\def\NN{{\mathbb N}}
\def\PP{{\mathbb P}}
\def\ra{\rightarrow}
\def\hat{\widehat}
\def\reg{\mathop{\rm reg}\nolimits}
\def\im{\mathop{\rm im}\nolimits}
\def\GL{\mathop{\rm GL}\nolimits}
\def\SL{\mathop{\rm SL}\nolimits}
\def\SU{\mathop{\rm SU}\nolimits}
\def\U{\mathop{\rm U}\nolimits}
\def\Cat{\mathop{\rm Cat}\nolimits}
\def\s{\mathop{\rm s}\nolimits}
\def\ss{\mathop{\rm ss}\nolimits}
\def\ps{\mathop{\rm ps}\nolimits}
\def\Res{\mathop{\rm Res}\nolimits}
\def\Soc{\mathop{\rm Soc}\nolimits}
\def\Jac{\mathop{\rm Jac}\nolimits}
\def\Spec{\mathop{\rm Spec}\nolimits}
\def\Proj{\mathop{\rm Proj}\nolimits}
\def\Gr{\mathop{\rm Gr}\nolimits}
\def\gitq{/\hspace{-0.1cm}/}
\def\Hess{\mathop{\rm Hess}\nolimits}
\newcommand{\bbf}{\mathfrak{f}}
\newcommand\co{\colon} 
\newtheorem{theorem}{THEOREM}[section]
\newtheorem{corollary}[theorem]{Corollary}
\newtheorem{proposition}[theorem]{Proposition}
\newtheorem{conjecture}[theorem]{Conjecture}
\theoremstyle{definition}
\newtheorem{example}[theorem]{Example}
\newtheorem{prop}[theorem]{Proposition}
\newtheorem{claim}[theorem]{Claim}
\newtheorem{lemma}[theorem]{Lemma}
\newtheorem{openprob}[theorem]{Open Problem}
\theoremstyle{remark}
\newtheorem{remark}[theorem]{Remark}
\def\blfootnote{\xdef\@thefnmark{}\@footnotetext}
\begin{document}

\title[Associated forms: current progress and open problems]{Associated forms:
\vspace{0.1cm}\\ 
current progress and open problems}\blfootnote{{\bf Mathematics Subject Classification:} 13A50, 14L24, 32S25.}\blfootnote{{\bf Keywords:} associated forms, isolated hypersurface singularities, the Mather-Yau theorem, classical invariant theory, Geometric Invariant Theory, contravariants of homogeneous forms.}

\author[Isaev]{Alexander Isaev}

\address{Mathematical Sciences Institute\\
Australian National University\\
Canberra, Acton, ACT 2601, Australia}
\email{alexander.isaev@anu.edu.au}

\maketitle

\thispagestyle{empty}

\pagestyle{myheadings}

\begin{abstract}  
Let $d\ge 3$, $n\ge 2$. The object of our study is the morphism $\Phi$, introduced in earlier articles by J. Alper, M. Eastwood and the author, that assigns to every homogeneous form of degree $d$ on $\CC^n$ for which the discriminant $\Delta$ does not vanish a form of degree $n(d-2)$ on the dual space called the associated form. This morphism is $\SL_n$-equivariant and is of interest in connection with the well-known Mather-Yau theorem, specifically, with the problem of explicit reconstruction of an isolated hypersurface singularity from its Tjurina algebra. Letting $p$ be the smallest integer such that the product $\Delta^p\Phi$ extends to the entire affine space of degree $d$ forms, one observes that the extended map defines a contravariant. In the present paper we survey known results on the morphism $\Phi$, as well as the contravariant $\Delta^p\Phi$, and state several open problems. Our goal is to draw the attention of complex analysts and geometers to the concept of the associated form and the intriguing connection between complex singularity theory and invariant theory revealed through it.
\end{abstract}

\section{Introduction}\label{intro}
\setcounter{equation}{0}

In this paper we discuss a curious connection between complex singularity theory and classical invariant theory proposed in \cite{EI} and further explored in \cite{AI1}, \cite{AI2}, \cite{AIK}, \cite{F}, \cite{FI}. What follows is a survey of known results and open problems. It is written as a substantially extended version of our recent paper \cite{I3} and is intended mainly for complex analysts and geometers. Thus, some of our expositional and notational choices may not be up to the taste of a reader with background in algebra, for which we apologize.

Consider the vector space $\CC[z_1,\dots,z_n]_d$ of homogeneous forms of degree $d$ on $\CC^n$, where $n\ge 2$, $d\ge 3$. Fix $f\in\CC[z_1,\dots,z_n]_d$ and look at the hypersurface $V_f:=\{z\in\CC^n: f(z)=0\}$. We will be interested in the situation when the singularity of $f$ at the origin is isolated, or, equivalently, when the discriminant $\Delta(f)$ of $f$ does not vanish. In this case, define $M_f:=\CC[[z_1,\dots,z_n]]/(f_{z_{{}_1}},\dots,f_{z_{{}_n}})$ to be the {\it Milnor algebra}\, of the singularity. By the Mather-Yau theorem (see \cite{MY} and also \cite{Be}, \cite{Sh}, \cite[Theorem 2.26]{GLS}, \cite{GP}), the isomorphism class of $M_f$ determines the germ of the hypersurface $V_f$ at the origin up to biholomorphism, hence the form $f$ up to linear equivalence.

In fact, for a general isolated hypersurface singularity in $\CC^n$ defined by (the germ of) a holomorphic function $F$, the Mather-Yau theorem states that, remarkably, the singularity is determined, up to biholomorphism, by $n$ and the isomorphism class of the {\it Tjurina algebra}\, $T_F:=\CC[[z_1,\dots,z_n]]/(F,F_{z_{{}_1}},\dots,F_{z_{{}_n}})$. The proof of the Mather-Yau theorem is not constructive, and it is an important open problem---called {\it the reconstruction problem}---to understand explicitly how the singularity is encoded in the corresponding Tjurina algebra. In this paper we concentrate on the homogeneous case as set out in the previous paragraph (notice that $T_f=M_f$). In this situation, the reconstruction problem was solved in \cite{IK}, where we proposed a simple algorithm for extracting the linear equivalence class of the form $f$ from the isomorphism class of $M_f$. An alternative (invariant-theoretic) approach to the reconstruction problem---which applies to the more general class of quasihomogeneous isolated hypersurface singularities---was initiated in article \cite{EI}, where we proposed a method for extracting certain numerical invariants of the singularity from its Milnor algebra (see \cite{I2} for a comparison of the two techniques). Already in the case of homogeneous singularities this approach leads to a curious concept that deserves attention regardless of the reconstruction problem and that is interesting from the purely classical invariant theory viewpoint. This concept is the focus of the present paper.

We will now briefly describe the idea behind it with details postponed until Section \ref{setup}. Let $\fm$ be the (unique) maximal ideal of $M_f$ and $\Soc(M_f)$ the socle of $M_f$, defined as $\Soc(M_f):=\{x\in M_f: x\,\fm=0\}$. It turns out that $M_f$ is a Gorenstein algebra, i.e., $\dim_{\CC}\Soc(M_f)=1$, and, moreover, that $\Soc(M_f)$ is spanned by the image $\reallywidehat{\Hess(f)}$ of the Hessian $\Hess(f)$ of $f$ in $M_f$. Observing that $\Hess(f)$ has degree $n(d-2)$, one can then introduce a form defined on the $n$-dimensional quotient $\fm/\fm^2$ with values in $\Soc(M_f)$ as follows:
$$
\fm/\fm^2 	 \to \Soc(M_f), \quad x  \mapsto y^{\,n(d-2)},
$$
where $y$ is any element of ${\mathfrak m}$ that projects to $x\in{\mathfrak m}/{\mathfrak m}^2$.  There is a canonical isomorphism ${\mathfrak m}/{\mathfrak m}^2\cong \CC^{n*}$ and, since $\reallywidehat{\Hess(f)}$ spans the socle, there is also a canonical isomorphism $\Soc(M_f) \cong \CC$. Hence, one obtains a form ${\mathbf f}$ of degree $n(d-2)$ on  $\CC^{n*}$ (i.e., an element of $\CC[e_1,\dots,e_n]_{n(d-2)}$, where $e_1,\dots,e_n$ is the standard basis of $\CC^n$), called the {\it associated form}\, of $f$.

The principal object of our study is the morphism
$$
\Phi:X_n^d\to \CC[e_1,\dots,e_n]_{n(d-2)},\quad f\mapsto{\mathbf f}
$$
of affine algebraic varieties, where $X_n^d$ is the variety of forms in $\CC[z_1,\dots,z_n]_d$ with nonzero discriminant. This map has a $\GL_n$-equivariance property (see Proposition \ref{equivariance}), and one of the reasons for our interest in $\Phi$ is the following intriguing conjecture proposed in \cite{EI}, \cite{AI1}:

\begin{conjecture}\label{conj2} For every regular $\GL_n$-invariant function $S$ on $X_n^d$ there exists a rational $\GL_n$-invariant function $R$ on $\CC[e_1,\dots,e_n]_{n(d-2)}$ defined at all points of the set\, $\Phi(X_n^d)\subset \CC[e_1,\dots,e_n]_{n(d-2)}$ such that $R\circ\Phi=S$.
\end{conjecture}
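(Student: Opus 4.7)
My plan is to reduce the conjecture to an orbit-separation property of $\Phi$ and then use categorical quotients to descend $S$ and extend. The key input is that the associated form $\mathbf{f}$ is (up to normalization) the Macaulay inverse system of the graded Gorenstein algebra $M_f$: since $\Soc(M_f)$ is spanned by $\reallywidehat{\Hess(f)}$ and $\mathbf{f}$ encodes precisely this socle-valued power map, apolarity theory yields $M_f\cong\CC[\partial_1,\dots,\partial_n]/\Ann(\mathbf{f})$, so $\mathbf{f}$ recovers $M_f$ as a graded $\CC$-algebra. The Mather--Yau theorem then recovers $f$ from $M_f$ up to $\GL_n$-equivalence. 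Combined with the equivariance of $\Phi$ (Proposition~\ref{equivariance}), this shows that $\Phi(f)$ and $\Phi(g)$ lying in the same $\GL_n$-orbit in the target forces $f$ and $g$ to lie in the same $\GL_n$-orbit in $X_n^d$; in other words, $\Phi$ is injective on orbit spaces.

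Granted this, let $\pi\co X_n^d\to X_n^d\gitq\GL_n$ be the categorical quotient, so that $S=\pi^*\tilde S$ for a unique regular $\tilde S$ on the quotient, and let $\bar\Phi\co X_n^d\gitq\GL_n\to \overline{\Phi(X_n^d)}\gitq\GL_n$ be the morphism induced by $\Phi$. Orbit injectivity makes $\bar\Phi$ injective on closed points, dominant onto its image, and (since we work in characteristic zero, with both varieties irreducible of the same dimension) birational onto its image. Hence $\tilde S$ corresponds through $\bar\Phi$ to a rational function $\tilde R$ on $\overline{\Phi(X_n^d)}\gitq\GL_n$ that is regular on $\bar\Phi(X_n^d\gitq\GL_n)$; pulling $\tilde R$ back through the categorical quotient on the target and then writing the result as a quotient of two $\GL_n$-invariant polynomials on $\CC[e_1,\dots,e_n]_{n(d-2)}$ whose denominator does not vanish on $\Phi(X_n^d)$ delivers the required rational $\GL_n$-invariant $R$ with $R\circ\Phi=S$.

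The main obstacle, and likely the reason this has remained a conjecture, is upgrading set-theoretic orbit injectivity to the \emph{birationality} of $\bar\Phi$, and in particular extending $\bar\Phi^{-1}$ regularly across the whole image $\bar\Phi(X_n^d\gitq\GL_n)$. Injectivity on closed points alone does not force birationality without a generic \'etaleness statement for $\bar\Phi$, which amounts to the differential $d\Phi$ being transverse to $\GL_n$-orbits at generic $f$. Equally delicate is controlling the prospective $R$ over strata where $M_f$ acquires extra symmetry and the $\GL_n$-stabilizer jumps: on those loci $\bar\Phi$ may fail to be a local embedding, and ruling out indeterminacies of $R$ there would require an explicit local analysis via the Hessian of $f$ and the detailed structure of $M_f$.
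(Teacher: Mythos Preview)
You should first note that Conjecture~\ref{conj2} is genuinely open in the paper: it is established only for $n=2$ and for $n=d=3$ (Corollary~\ref{positivecor}), and the paper explicitly distinguishes it from the weaker Conjecture~\ref{conj3}, which drops the requirement that $R$ be defined on all of $\Phi(X_n^d)$ and which \emph{is} proved in general (Theorem~\ref{weakerconjsettled}). Your outline, to the extent it works, proves only the weaker statement.

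The concrete gap is the sentence ``$\tilde S$ corresponds through $\bar\Phi$ to a rational function $\tilde R$ \dots\ that is regular on $\bar\Phi(X_n^d\gitq\GL_n)$.'' Orbit injectivity of $\Phi$ (which does follow, as you say, from Proposition~\ref{P:inverse-system} and Mather--Yau, and is recorded as Corollary~\ref{barphiinj}) plus characteristic zero gives birationality of $\bar\Phi$ onto its image, but birationality only makes $(\bar\Phi)^{-1}$, and hence $\tilde R$, regular on a dense open subset of the image---not on all of it. To get regularity everywhere on the image you need $\bar\Phi$ to be an \emph{isomorphism onto its image}, and to then lift $\tilde R$ to a rational invariant on the ambient space with no poles on $\Phi(X_n^d)$ you further need that image to be \emph{closed in an affine open} subset of the target quotient. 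These are exactly the hypotheses of Claim~\ref{claimconj}, which the paper reduces to conditions~(C1) and~(C2) (Open Problem~\ref{prob1}); neither is known in general. Your final paragraph correctly senses trouble, but slightly misdiagnoses it: generic \'etaleness is not the issue (injectivity already forces birationality over~$\CC$); the issue is normality of the image (Open Problem~\ref{probnormality}) and the existence of an invariant cutting out an affine open containing the image as a closed set (Open Problem~\ref{openprobexistinvari}).

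A smaller technical point: taking $\GL_n$-quotients directly on the target is problematic because scalars act with nonzero weight on $\CC[e_1,\dots,e_n]_{n(d-2)}$, so the affine GIT quotient collapses. The paper avoids this by passing to $\PP\Phi$ and $\SL_n$-quotients; you would need to do the same to make $\bar\Phi$ well-posed.
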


Observe that, if settled, Conjecture \ref{conj2} would imply an invariant-theoretic solution to the reconstruction problem in the homogeneous case. Indeed, on the one hand, it is well-known that the regular $\GL_n$-invariant functions on $X_n^d$ separate the $\GL_n$-orbits, and, on the other hand, the result of the evaluation of any rational $\GL_n$-invariant function at the associated form ${\mathbf f}$ depends only on the isomorphism class of $M_f$. Thus, the conjecture would yield a complete system of biholomorphic invariants of homogeneous isolated hypersurface singularities constructed from the algebra $M_f$ alone. So far, Conjecture \ref{conj2} has been confirmed for binary forms (see \cite{EI}, \cite{AI2}), and its weaker variant (which does not require that the function $R$ be defined on the entire image of $\Phi$) has been established for all $n$ and $d$ (see \cite{AI1}).

The conjecture is also rather interesting from the purely invariant-theoretic point of view. Indeed, if settled, it would imply that the invariant theory of forms in $\CC[z_1,\dots,z_n]_d$ can be extracted, by way of the morphism $\Phi$, from that of forms in $\CC[e_1,\dots,e_n]_{n(d-2)}$ at least at the level of rational invariant functions, or {\it absolute invariants}. Indeed, every absolute invariant of forms in $\CC[z_1,\dots,z_n]_d$ can be represented as the ratio of two $\GL_n$-invariant regular functions on $X_n^d$ (see \cite[Corollary 5.24 and Proposition 6.2]{Mu}).

The goal of the present survey is to draw the attention of the complex-analytic audience to the concept of the associated form and the curious connection between complex geometry and invariant theory manifested through it. In the paper, we focus on two groups of problems concerning associated forms. The first one is related to establishing Conjecture \ref{conj2} and is discussed in Sections \ref{results} and \ref{S:binaryquarticternarycubics}. The other one is also relevant to classical invariant theory but in a different way. Namely, letting $p$ be the smallest positive integer such that the product $\Delta^p\Phi$ extends to a morphism from $\CC[z_1,\dots,z_n]_d$ to $\CC[e_1,\dots,e_n]_{n(d-2)}$, by utilizing the equivariance property of $\Phi$ one observes that this product defines a contravariant of degree $np(d-1)^{n-1}-n$ of forms in $\CC[z_1,\dots,z_n]_d$. While it can be expressed via known contravariants for small values of $n$ and $d$ (see \cite{AIK} and Subsection \ref{contravarsmallnd} below), it appears to be new in general (cf.~\cite{D2}). We discuss this contravariant in Section \ref{S:contravariant} focussing on the problem of estimating the\linebreak integer $p$. Note that some of the details included in the survey have not been previously published.

For simplicity we have chosen to work over the field $\CC$ although everything that follows applies verbatim to any algebraically closed field of characteristic zero, and many of the results do not in fact require algebraic closedness. We also note that all algebraic geometry in the paper is done for complex varieties (i.e., reduced separated schemes of finite type over $\CC$) hence in the proofs it suffices to argue at the level of closed points, and this is what we do. In particular, when we speak about affine (resp.~projective) varieties, we only deal with the maximal spectra (resp.~maximal projective spectra) of the corresponding rings.

{\bf Acknowledgement.} We are grateful to M. Fedorchuk for many very helpful discussions. 

\section{Preliminaries on associated forms}\label{setup}
\setcounter{equation}{0}

In this section we provide an introduction to associated forms and their properties.

\subsection{The associated form of a nondegenerate form} 
For any finite collection of symbols $t_1,\dots,t_m$ we denote by $\CC[t_1,\dots,t_m]$ the algebra of polynomials in these symbols with complex coefficients and by $\CC[t_1,\dots,t_m]_k\subset\CC[t_1,\dots,t_m]$ the vector space of homogeneous forms in $t_1,\dots,t_m$ of degree $k\ge 0$. 
Clearly, we have
$$
\CC[t_1,\dots,t_m]=\bigoplus_{k=0}^{\infty}\CC[t_1,\dots,t_m]_k.
$$

We now fix $n\ge 2$ and let $e_1,\dots,e_n$ be the standard basis of $\CC^n$. The group $\GL_n:=\GL_n(\CC)$ (hence the group $\SL_n:=\SL_n(\CC)$) acts on $\CC^n$ via   
$$
(e_1,\dots,e_n)\mapsto (e_1,\dots,e_n)C,\,C\in\GL_n,
$$
or, equivalently, as
\begin{equation}
Cz=C(z_1,\dots,z_n):=(z_1,\dots,z_n)C^T,\,\,z=(z_1,\dots,z_n)\in\CC^n,\,\,C\in\GL_n.\label{actiononcn}
\end{equation}
This action induces an action on the space $\CC[e_1,\dots,e_n]_k$:
\begin{equation}
(CF)(e_1,\dots,e_n):=F((e_1,\dots,e_n)C),\,F\in \CC[e_1,\dots,e_n]_k,\,C\in\GL_n.\label{dualactionfe}
\end{equation}

Next, let us think of the coordinates $z_1,\dots,z_n$ on $\CC^n$ with respect to the basis $e_1,\dots,e_n$ as the elements of the basis of $\CC^{n*}$ dual to $e_1,\dots,e_n$. Then the dual action of $\GL_n$ on $\CC^{n*}$ is given by
$$
(z_1,\dots,z_n)\mapsto (z_1,\dots,z_n)C^{-T},\,C\in\GL_n.
$$
Equivalently, if we identify a point $z^*\in\CC^{n*}$ with its coordinate vector $(z_1^*,\dots,z_n^*)$ with respect to the basis $z_1,\dots,z_n$, this action is written as  
\begin{equation}
Cz^*=C(z_1^*,\dots,z_n^*)=(z_1^*,\dots,z_n^*)C^{-1},\,z^*=(z_1^*,\dots,z_n^*)\in\CC^{n*},\,C\in\GL_n.\label{actiononcn*}
\end{equation}
It leads to an action on $\CC[z_1,\dots,z_n]_k$:
\begin{equation}
(Cf)(z_1,\dots,z_n):=f\left((z_1,\dots,z_n)C^{-T}\right),\,f\in \CC[z_1,\dots,z_n]_k,\,C\in\GL_n.\label{actionfz}
\end{equation}
Two forms in either $\CC[e_1,\dots,e_n]_k$ or $\CC[z_1,\dots,z_n]_k$ that lie in the same $\GL_n$-orbit are called {\it linearly equivalent}. 

Clearly, every element of $\CC[z_1,\dots,z_n]_k$ can be thought of as a function on $\CC^n$, so to every nonzero $f\in\CC[z_1,\dots,z_n]_k$ we associate the hypersurface
$$
V_f:=\{z\in\CC^n: f(z)=0\}
$$
and consider it as a complex space with the structure sheaf induced by $f$. The singular set of $V_f$ is then the critical set of $f$. In particular, if $k\ge 2$ the hypersurface $V_f$ has a singularity at the origin. We are interested in the situation when this singularity is isolated, or, equivalently, when $V_f$ is smooth away from 0. This occurs if and only if $f$ is {\it nondegenerate}, i.e., $\Delta(f)\ne 0$, where $\Delta$ is the {\it discriminant}\, (see \cite[Chapter 13]{GKZ}). 

Fix $d\ge 3$ and define
$$
X^d_n:=\{f\in\CC[z_1,\dots,z_n]_d: \Delta(f)\ne 0\}.
$$
Observe that $\GL_n$ acts on the affine variety $X_n^d$ and note that every $f\in X_n^d$ is {\it stable}\, with respect to this action, i.e., the orbit of $f$ is closed in $X_n^d$ and has dimension $n^2$  (see \cite[Proposition 4.2]{MFK}, \cite[Corollary 5.24, Lemma 5.40]{Mu} and cf.~Subsection \ref{reviewGIT} below). It then follows by standard Geometric Invariant Theory arguments (see, e.g., \cite[Proposition 3.1]{EI}) that regular invariant functions on $X_n^d$ separate the $\GL_n$-orbits. As explained in the introduction, this is one of the facts that link Conjecture \ref{conj2} with the reconstruction problem arising from the Mather-Yau theorem.  

Fix $f\in X^d_n$ and consider the {\it Milnor algebra}\, of the singularity\ of $V_f$, which is the complex local algebra
$$
M_f:=\CC[[z_1,\dots,z_n]]/(f_{z_{{}_1}},\dots,f_{z_{{}_n}}),
$$
where $\CC[[z_1,\dots,z_n]]$ is the algebra of formal power series in $z_1,\dots,z_n$ with complex coefficients. Since the singularity of $V_f$ is isolated, it follows from the Nullstellensatz 
that the algebra $M_f$ is Artinian, i.e., $\dim_{\CC}M_f<\infty$. Therefore, $f_{z_{{}_1}},\dots,f_{z_{{}_n}}$ is a system of parameters in $\CC[[z_1,\dots,z_n]]$, and, since $\CC[[z_1,\dots,z_n]]$ is a regular local ring, $f_{z_{{}_1}},\dots,f_{z_{{}_n}}$ is a regular sequence in $\CC[[z_1,\dots,z_n]]$. This yields that $M_f$ is a complete intersection (see \cite[\S 21]{Ma}).

It is convenient to utilize another realization of the Milnor algebra. Namely, it is easy to see that $M_f$ is isomorphic to the algebra $\CC[z_1,\dots,z_n]/(f_{z_{{}_1}},\dots,f_{z_{{}_n}})$, so we write  
$$
M_f=\CC[z_1,\dots,z_n]/(f_{z_{{}_1}},\dots,f_{z_{{}_n}}).
$$
Let ${\mathfrak m}$ denote the maximal ideal of $M_f$, which consists of all elements represented by polynomials in $\CC[z_1,\dots,z_n]$ vanishing at the origin. By Nakayama's lemma, the maximal ideal is nilpotent and we let $\nu:=\max\{\eta\in\NN: {\mathfrak m}^{\eta}\ne 0\}$ be the socle degree of $M_f$.

Since $M_f$ is a complete intersection, by \cite{Ba} it is a {\it Gorenstein algebra}. This means that the {\it socle}\, of $M_f$, defined as
$$
\Soc(M_f):=\{x\in M_f : x\,{\mathfrak m}=0\},
$$
is a one-dimensional vector space over $\CC$  (see, e.g., \cite[Theorem 5.3]{Hu}). We then have $\Soc(M_f)={\mathfrak m}^{\nu}$. Furthermore, $\Soc(M_f)$ is spanned by the projection $\reallywidehat{\Hess(f)}$ to $M_f$ of the Hessian $\Hess(f)$ of $f$ (see, e.g., \cite[Lemma 3.3]{Sai}). Since $\Hess(f)$ has degree $n(d-2)$, it follows that $\nu=n(d-2)$. Thus, the subspace 
\begin{equation}
\begin{array}{l}
W_f:=\CC[z_1,\dots,z_n]_{n(d-2)-(d-1)}f_{z_{{}_1}}+\dots+\\
\vspace{-0.3cm}\\
\hspace{3cm}\CC[z_1,\dots,z_n]_{n(d-2)-(d-1)}f_{z_{{}_n}}\subset\CC[z_1,\dots,z_n]_{n(d-2)}\end{array}\label{subspace}
\end{equation}
has codimension 1, with the line spanned by $\Hess(f)$ being complementary to it.

Denote by $\omega_f \co \Soc(M_f)\ra\CC$ the linear isomorphism given by the condition $\omega_f(\reallywidehat{\Hess(f)})=1$. Define a form ${\mathbf f}$ on $\CC^{n*}$ as follows. Fix $z^*\in\CC^{n*}$, let, as before, $z_1^*,\dots,z_n^*$ be the coordinates of $z^*$ with respect to the basis $z_1,\dots,z_n$, and set
\begin{equation}
{\mathbf f}(z^*):=\omega_f\left((z_1^*\widehat{z}_1+\dots+z_n^*\widehat{z}_n)^{n(d-2)}\right),\label{assocformdef}
\end{equation}
where $\widehat{z}_j$ is the projection to $M_f$ of the coordinate function $z_j\in\CC[z_1,\dots,z_n]$.

Notice that if  $i_1,\dots,i_n$ are nonnegative integers such that $i_1+\dots+i_n=n(d-2)$, the product $\widehat{z}_1^{i_1}\cdots \widehat{z}_n^{i_n}$ lies in $\Soc(M_f)$, hence we have 
\begin{equation}
\widehat{z}_1^{i_1}\cdots \widehat{z}_n^{i_n}=\mu_{i_1,\dots,i_n}(f) \reallywidehat{\Hess(f)}\label{assocformexpppp}
\end{equation}
for some $\mu_{i_1,\dots,i_n}(f)\in\CC$. In terms of the coefficients $\mu_{i_1,\dots,i_n}(f)$ the form ${\mathbf f}$ is written as 
\begin{equation}
{\mathbf f}(z^*)=\sum_{i_1+\cdots+i_n=n(d-2)}\frac{(n(d-2))!}{i_1!\cdots i_n!}\mu_{i_1,\dots,i_n}(f)
z_1^{* i_1}\cdots z_n^{* i_n}.\label{assocformexpp}
\end{equation}
One can view the expression in the right-hand side of (\ref{assocformexpp}) as an element of $\CC[z_1^*,\dots,z_n^*]_{n(d-2)}$, where we regard $z_1^*,\dots,z_n^*$ as the basis of $\CC^{n**}$ dual to the basis $z_1,\dots,z_n$ of $\CC^{n*}$. Identifying $z^*_j\in\CC^{n**}$ with $e_j\in\CC^n$, we will think of ${\mathbf f}$ as the following element of $\CC[e_1,\dots,e_n]_{n(d-2)}$:
\begin{equation}
{\mathbf f}(e_1,\dots,e_n)=\sum_{i_1+\cdots+i_n=n(d-2)}\frac{(n(d-2))!}{i_1!\cdots i_n!}\mu_{i_1,\dots,i_n}(f)
e_1^{i_1}\cdots e_n^{i_n}.\label{assocformexpp1}
\end{equation}
We call ${\mathbf f}$ given by expression (\ref{assocformexpp1}) the {\it associated form}\, of $f$.

\begin{example}\label{E:example1} \rm If $f = a_1 z_1^d + \cdots + a_n z_n^d$ for nonzero $a_i \in \CC$, then one computes $\Hess(f) = a_1 \cdots a_n(d(d-1))^n (z_1 \cdots z_n)^{d-2}$ and
$$
{\mathbf f}(e_1,\dots,e_n) = \frac{1}{a_1 \cdots a_n} \frac{(n(d-2))!}{(d!)^n} e_1^{d-2} \cdots e_n^{d-2}.
$$
\end{example}

\noindent More examples of calculating associated forms will be given in Section \ref{S:binaryquarticternarycubics}. 

It is not hard to show that each $\mu_{i_1,\dots,i_n}$ is a regular function on the affine variety $X_n^d$ (see, e.g., \cite[Proposition 2.1]{I3}). Hence, we have 
\begin{equation}
\mu_{i_1,\dots,i_n}=\frac{P_{i_1,\dots,i_n}}{\Delta^{p_{i_1,\dots,i_n}}}\label{formulaformus}
\end{equation}
for some $P_{i_1,\dots,i_n}\in\CC[\CC[z_1,\dots,z_n]_d]$ and nonnegative integer $p_{i_1,\dots,i_n}$. Here and in what follows for any affine variety $X$ over $\CC$ we denote by $\CC[X]$ its coordinate ring, which coincides with the ring ${\mathcal O}_X(X)$ of all regular functions on $X$. For example, $\CC[z_1,\dots,z_n]=\CC[\CC^n]$ and $\CC[z_1^*,\dots,z_n^*]=\CC[\CC^{n*}]$. 

Thus, we have arrived at the morphism
$$
\Phi \co X_n^d\ra \CC[e_1,\dots,e_n]_{n(d-2)},\quad f\mapsto {\mathbf f}
$$
of affine algebraic varieties. Notice that by Example \ref{E:example1} this morphism is not injective. 

Next, recall that for any $k\ge 0$ the {\it polar pairing}\, between the spaces $\CC[z_1,\dots,z_n]_{k}$ and $\CC[e_1,\dots,e_n]_{k}$ is given as follows:\begin{equation}
\begin{array}{l}
\CC[z_1,\dots,z_n]_{k}\times\CC[e_1,\dots,e_n]_{k}\to\CC,\\
\vspace{-0.1cm}\\
(g(z_1,\dots,z_n),F(e_1,\dots,e_n))\mapsto g\diamond F:=\\
\vspace{-0.3cm}\\
\hspace{5cm}g\left(\partial/\partial e_1,\dots,\partial/\partial e_n\right) F(e_1,\dots,e_n).
\end{array}\label{polarpairing}
\end{equation}
This pairing is nondegenerate and therefore yields a canonical identification between $\CC[e_1,\dots,e_n]_{k}$ and $\CC[z_1,\dots,z_n]_{k}^*$ (see, e.g., \cite[Subsection 1.1.1]{D1} for details). Using this identification, one may regard the associated form as an element of $\CC[z_1,\dots,z_n]_{n(d-2)}^*$, in which case the morphism $\Phi$ turns into a morphism from $X_n^d$ to $\CC[z_1,\dots,z_n]_{n(d-2)}^*$; we denote it by $\tilde\Phi$.  

The morphism $\tilde\Phi$ admits a rather simple description. For $f\in X_n^d$, let $\tilde\omega_f$ be the element of $\CC[z_1,\dots,z_n]_{n(d-2)}^*$ such that: 
\begin{itemize}

\item[(i)] $\ker\tilde\omega_f=W_f$ with $W_f$ introduced in (\ref{subspace}), and 

\vspace{0.1cm}
\item[(ii)]  $\tilde\omega_f(\Hess(f))=1$. 

\end{itemize}

\noindent Clearly, $\mu_{i_1,\dots,i_n}(f)=\tilde\omega_f(z_1^{i_1}\cdots z_n^{i_n})$ for $i_1+\dots+i_n=n(d-2)$. A straightforward calculation yields:

\begin{proposition}\label{newmap}
The morphism
$$
\tilde\Phi: X_n^d\to \CC[z_1,\dots,z_n]_{n(d-2)}^*
$$
sends a form $f$ to $(n(d-2))!\,\tilde\omega_f$.
 \end{proposition}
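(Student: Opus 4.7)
The statement asserts equality of two elements of $\CC[z_1,\dots,z_n]_{n(d-2)}^{*}$, so the natural plan is to evaluate both sides on the spanning set of monomials $z_1^{j_1}\cdots z_n^{j_n}$ with $j_1+\dots+j_n=n(d-2)$ and verify agreement coefficient by coefficient. Once this is done, the conclusion follows by linearity.

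First I would make the polar-pairing identification explicit. Under (\ref{polarpairing}), the form $\mathbf{f}\in\CC[e_1,\dots,e_n]_{n(d-2)}$ corresponds to the functional $g\mapsto g\diamond\mathbf{f}$, so
$$
\tilde\Phi(f)(z_1^{j_1}\cdots z_n^{j_n})=\frac{\partial^{n(d-2)}\mathbf{f}}{\partial e_1^{j_1}\cdots\partial e_n^{j_n}}.
$$
Because $\mathbf{f}$ is homogeneous of degree $n(d-2)=j_1+\dots+j_n$, the right-hand side is a constant. Plugging in the expansion (\ref{assocformexpp1}), only the summand indexed by $(i_1,\dots,i_n)=(j_1,\dots,j_n)$ survives the differentiation (any other multi-index with the same total degree must have some $i_k<j_k$, so its contribution vanishes). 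A direct count of multinomial factorials produces the value $(n(d-2))!\,\mu_{j_1,\dots,j_n}(f)$.

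To finish, I would invoke the identity $\mu_{j_1,\dots,j_n}(f)=\tilde\omega_f(z_1^{j_1}\cdots z_n^{j_n})$ recorded immediately before the statement: relation (\ref{assocformexpppp}) exhibits $z_1^{j_1}\cdots z_n^{j_n}-\mu_{j_1,\dots,j_n}(f)\Hess(f)$ as an element of the Jacobian ideal in degree $n(d-2)$, which equals $W_f=\ker\tilde\omega_f$, so $\tilde\omega_f$ simply reads off the coefficient of $\Hess(f)$. Thus both $\tilde\Phi(f)$ and $(n(d-2))!\,\tilde\omega_f$ take the value $(n(d-2))!\,\mu_{j_1,\dots,j_n}(f)$ on every monomial, and linearity yields the proposition. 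There is no real obstacle here: the argument is a careful bookkeeping between the polar pairing and the socle-splitting $\CC[z_1,\dots,z_n]_{n(d-2)}=W_f\oplus\CC\cdot\Hess(f)$ already established in the preceding subsection, which is why the result is announced as a straightforward computation.
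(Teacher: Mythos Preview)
Your argument is correct and is precisely the ``straightforward calculation'' the paper alludes to but omits: evaluating both functionals on the monomial basis via the polar pairing and the identity $\mu_{j_1,\dots,j_n}(f)=\tilde\omega_f(z_1^{j_1}\cdots z_n^{j_n})$. There is nothing to add.
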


The maps $\Phi$ and $\tilde\Phi$ are rather natural; in particular, \cite[Proposition 2.1]{AI1} implies equivariance properties for them. Recall that the actions of $\GL_n$ on $\CC[z_1,\dots,z_n]_d$, $\CC[e_1,\dots,e_n]_{n(d-2)}$ are given by formulas (\ref{actionfz}), (\ref{dualactionfe}), respectively, 
and on the dual space $\CC[z_1,\dots,z_n]_{n(d-2)}^*$ by
\begin{equation}
\begin{array}{l}
(Ch)(g):=h(C^{-1}g),\\
\vspace{-0.3cm}\\
\hspace{2cm}h\in \CC[z_1,\dots,z_n]_{n(d-2)}^*,\, g\in \CC[z_1,\dots,z_n]_{n(d-2)},\,C\in\GL_n.
\end{array}\label{dualdualhg}
\end{equation}
The isomorphism $\CC[e_1,\dots,e_n]_{n(d-2)}\simeq  \CC[z_1,\dots,z_n]_{n(d-2)}^*$ induced by the polar pairing is equivariant with respect to actions (\ref{dualactionfe}) and (\ref{dualdualhg}). 

We will now state the equivariance properties of $\Phi$ and $\tilde\Phi$:
  
\begin{proposition}\label{equivariance} For every $f\in X_n^d$ and $C\in\GL_n$ one has
\begin{equation}
\Phi(Cf)=(\det C)^2\,\Bigl(C\Phi(f)\Bigr)\,\,\hbox{and}\,\,\,\tilde\Phi(Cf)=(\det C)^2\,\Bigl(C\tilde\Phi(f)\Bigr).\label{equivarphitildephi}
\end{equation}
In particular, the morphisms $\Phi$, $\tilde\Phi$ are $\SL_n$-equivariant. 
\end{proposition}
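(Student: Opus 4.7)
The plan is to verify the equivariance identities by direct calculation, tracking how each ingredient in the definition of ${\mathbf f}$ behaves under the substitution $z\mapsto C^{-1}z$. I set $g:=Cf$ and introduce the algebra automorphism $\sigma_C$ of $\CC[z_1,\dots,z_n]$ given by $\sigma_C(p)(z):=p(C^{-1}z)$. Everything will reduce to how $\sigma_C$ interacts with the Jacobian ideal, the Hessian, and the socle normalizations $\omega_f$, $\omega_g$.

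First, by the chain rule, $g_{z_i}=\sum_j(C^{-1})_{ji}\,\sigma_C(f_{z_j})$, which exhibits the Jacobian ideal of $g$ as the $\sigma_C$-image of that of $f$; hence $\sigma_C$ descends to a canonical $\CC$-algebra isomorphism $\sigma_C\co M_f\iso M_g$. Differentiating twice and taking determinants gives the standard transformation $\Hess(g)=(\det C)^{-2}\sigma_C(\Hess(f))$ in $\CC[z_1,\dots,z_n]_{n(d-2)}$, so passing to Milnor algebras yields $\sigma_C(\reallywidehat{\Hess(f)})=(\det C)^2\,\reallywidehat{\Hess(g)}$ in $M_g$. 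Combined with the normalizations $\omega_f(\reallywidehat{\Hess(f)})=\omega_g(\reallywidehat{\Hess(g)})=1$ and the fact that $\sigma_C$ carries $\Soc(M_f)$ isomorphically onto $\Soc(M_g)$, this supplies the key identity $\omega_g\circ\sigma_C=(\det C)^2\omega_f$ on socles.

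Next, I would expand ${\mathbf g}(z^*)$ from (\ref{assocformdef}) and transport the argument of $\omega_g$ back into $M_f$ using $\sigma_C^{-1}=\sigma_{C^{-1}}$. Since $\sigma_C^{-1}$ sends the class of $z_i$ in $M_g$ to $\sum_k C_{ik}\,\widehat{z}_k$ in $M_f$, the sum $\sum_i z_i^*\,\widehat{z}_i$ is converted to $\sum_k(C^T z^*)_k\,\widehat{z}_k$; the identity from the previous paragraph then yields ${\mathbf g}(z^*)=(\det C)^2\,{\mathbf f}(C^T z^*)$. A quick check against (\ref{dualactionfe}), written in the coordinates $z_j^*$ dual to $e_j$, shows that the contragredient action on $\CC[e_1,\dots,e_n]_{n(d-2)}$ is exactly $(CF)(z^*)=F(C^T z^*)$, so the displayed identity is the first equation of (\ref{equivarphitildephi}). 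The second equation follows at once because the polar pairing identification $\CC[e_1,\dots,e_n]_{n(d-2)}\simeq \CC[z_1,\dots,z_n]_{n(d-2)}^*$ was already noted to be $\GL_n$-equivariant, and $\SL_n$-equivariance is the case $\det C=1$.

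The main obstacle I anticipate is purely bookkeeping: matrix conventions (row versus column vectors, and the transposes appearing in (\ref{actionfz}), (\ref{dualactionfe}), (\ref{actiononcn*}), and in the chain rule) must be tracked with care so that the $C^T$ emerging inside $M_f$ matches the contragredient action on $\CC[e_1,\dots,e_n]_{n(d-2)}$ rather than its inverse or transpose. No conceptual difficulty lies beyond this; the decisive factor $(\det C)^2$ has a clean origin in the determinantal transformation law of the Hessian under a linear substitution.
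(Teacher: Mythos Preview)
Your proof is correct and is the natural direct verification: track the Jacobian ideal, the Hessian, and the socle normalization under the substitution $z\mapsto C^{-1}z$, and the factor $(\det C)^2$ drops out of the determinantal transformation law for $\Hess$. The paper does not actually prove this proposition in-text; it simply records the statement and refers to \cite[Proposition 2.1]{AI1} for the argument, so there is nothing to compare against here beyond noting that your computation is exactly the expected one. Your caveat about bookkeeping is well-placed: the paper uses row vectors throughout (so $Cz=zC^T$ and $Cz^*=z^*C^{-1}$), and with that convention your ``$C^Tz^*$'' is the row vector $z^*C$, which matches $(CF)(z^*)=F(z^*C)$ obtained from (\ref{dualactionfe}) under the identification $e_j\leftrightarrow z_j^*$.
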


Note that the associated form of $f\in X_n^d$ arises from the following invariantly defined map
$$
\fm/\fm^2 \to \Soc(M_f),\quad x \mapsto y^{n(d-2)},\label{coordinatefree}
$$
with $y\in\fm$ being any element that projects to $x\in\fm/\fm^2$. Indeed, ${\mathbf f}$ is derived from this map by identifying the target with $\CC$ via $\omega_f$ and the source with $\CC^{n*}$ by mapping the image of $\widehat{z}_j$ in $\fm/\fm^2$ to the element $z_j$ of the basis $z_1,\dots,z_n$ of $\CC^{n*}$. It then follows that for any rational $\GL_n$-invariant function $R$ on $\CC[e_1,\dots,e_n]_{n(d-2)}$ the value $R({\mathbf f})$ depends only on the isomorphism class of the algebra $M_f$. As stated in the introduction, this is another fact that links Conjecture \ref{conj2} with the reconstruction problem. 

\subsection{The associated form of a finite morphism} As before, let $n\ge 2$ and $d\ge 3$. We will now generalize the above construction from forms $f\in X_n^d$ to finite morphisms ${\mathfrak f}=(f_1,\dots,f_n):\CC^n\to\CC^n$ defined by $n$ forms of degree $d-1$. 

Consider the vector space $(\CC[z_1,\dots,z_n]_{d-1})^{\oplus n}$ of $n$-tuples ${\mathfrak f}=(f_1, \ldots, f_n)$ of forms of degree $d-1$.  Recall that the {\it resultant}\, $\Res$ on the space $(\CC[z_1,\dots,z_n]_{d-1})^{\oplus n}$ is a form with the property that $\Res({\mathfrak f}) \neq 0$ if and only if $f_1, \ldots, f_n$ have no common zeroes away from the origin (see, e.g., \cite[Chapter 13]{GKZ}). For an element ${\mathfrak f} = (f_1, \ldots, f_n) \in (\CC[z_1,\dots,z_n]_{d-1})^{\oplus n}$, we now introduce the algebra
$$
M_{\mathfrak f} := \CC[z_1,\dots,z_n]/ (f_1, \ldots, f_n)
$$
and recall a well-known lemma (see, e.g., \cite[Lemma 2.4]{AI2} and \cite[p.~187]{SS}):

\begin{lemma}\label{fourconds} \it The following statements are equivalent:
\begin{enumerate}
	\item[\rm (1)] the resultant $\Res(\bbf)$ is nonzero;
	\item[\rm (2)] the algebra $M_\bbf$ has finite vector space dimension;
	\item[\rm (3)] the morphism $\bbf \co \CC^n \to \CC^n$ is finite;
	\item[\rm (4)] the $n$-tuple $\bbf$ is a homogeneous system of parameters of $\CC[z_1,\dots,z_n]$, i.e., the Krull dimension of $M_\bbf$ is $0$.
\end{enumerate}
If the above conditions are satisfied, then $M_\bbf$ is a local complete intersection {\rm (}hence Gorenstein{\rm )} algebra whose socle $\Soc(M_{\mathfrak f})$ is generated in degree $n(d-2)$ by the image $\widehat{\Jac(\bbf)}$ in $M_\bbf$ of the Jacobian $\Jac(\bbf)$ of\, $\bbf$.\end{lemma}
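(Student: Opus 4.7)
The plan is to prove the four equivalences via a short cycle and then handle the structural statement about $M_\bbf$ and its socle.

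For the equivalences, I would argue as follows. The equivalence of $(1)$ with the condition ``$f_1,\dots,f_n$ have no common zero away from the origin'' is the classical defining property of the resultant recorded in \cite[Chapter 13]{GKZ}; by the homogeneous Nullstellensatz this condition is in turn equivalent to $(f_1,\dots,f_n)$ containing a power of the irrelevant ideal $(z_1,\dots,z_n)$, i.e., to $M_\bbf$ being finite-dimensional over $\CC$, which is $(2)$. The equivalence $(2)\Leftrightarrow(4)$ is the standard fact that a finitely generated graded $\CC$-algebra is Artinian exactly when it has Krull dimension zero. For $(2)\Leftrightarrow(3)$, finiteness of $\bbf$ as a morphism means $\CC[z_1,\dots,z_n]$ is a finite module over the subalgebra $\CC[f_1,\dots,f_n]$, which by the graded Nakayama lemma is equivalent to finite-dimensionality of the quotient $M_\bbf=\CC[z_1,\dots,z_n]/(f_1,\dots,f_n)$.

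Now assume these conditions hold. Then $f_1,\dots,f_n$ is a homogeneous system of parameters in the Cohen-Macaulay ring $\CC[z_1,\dots,z_n]$, hence a regular sequence, so $M_\bbf$ is a graded complete intersection and therefore Gorenstein by \cite{Ba}. The socle of a graded complete intersection in $\CC[z_1,\dots,z_n]$ generated in degrees $d-1,\dots,d-1$ is one-dimensional and concentrated in the single degree $n(d-1)-n=n(d-2)$; this matches the degree of $\Jac(\bbf)$, since each entry $\partial f_j/\partial z_i$ has degree $d-2$ so that the determinant is a form of degree $n(d-2)$. The inclusion $\widehat{\Jac(\bbf)}\in\Soc(M_\bbf)$ follows from Euler's identity $(d-1)f_j=\sum_i z_i(\partial f_j/\partial z_i)$: writing this as a matrix equation and multiplying by the adjugate of the Jacobian matrix yields $z_k\Jac(\bbf)\in(f_1,\dots,f_n)$ for every $k$, so $\widehat{\Jac(\bbf)}$ is annihilated by the maximal ideal.

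The main technical step, and the genuine obstacle, is to show that $\widehat{\Jac(\bbf)}$ is actually nonzero in $M_\bbf$, so that it generates the one-dimensional socle rather than merely lying in it. Here I would invoke the Scheja-Storch construction \cite{SS}, which for any graded complete intersection produces an explicit nonzero socle generator and identifies it, up to a nonzero scalar, with the image of the Jacobian of the defining sequence; this yields the required nonvanishing uniformly over the whole locus $\{\Res(\bbf)\ne 0\}$. As a sanity check one can verify the statement by hand in the diagonal case $\bbf_0=(z_1^{d-1},\dots,z_n^{d-1})$, where $\Jac(\bbf_0)=(d-1)^n z_1^{d-2}\cdots z_n^{d-2}$ manifestly generates the socle of the monomial complete intersection.
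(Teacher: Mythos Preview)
Your argument is correct. The paper itself does not give a proof of this lemma; it simply records it as well-known and points to \cite[Lemma 2.4]{AI2} and \cite[p.~187]{SS}. Your sketch is exactly the standard route those references encode: the equivalences via the resultant/Nullstellensatz and graded Nakayama, the complete intersection/Gorenstein conclusion via the regular-sequence argument and \cite{Ba}, the Euler--adjugate computation placing $\widehat{\Jac(\bbf)}$ in the socle, and the Scheja--Storch construction for its nonvanishing. There is nothing to compare beyond noting that you have unpacked what the paper leaves to citation.
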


\noindent In the above lemma
$
\Soc(M_\bbf):=\{x\in M_\bbf : x\,{\mathfrak m}=0\},
$
where the (unique) maximal ideal ${\mathfrak m}$ of $M_\bbf$ consists of all elements represented by polynomials in $\CC[z_1,\dots,z_n]$ vanishing at the origin.

Next, let $Y_n^{d-1}$ be the affine open subset of $(\CC[z_1,\dots,z_n]_{d-1})^{\oplus n}$ of all $n$-tuples of forms with nonzero resultant. Lemma \ref{fourconds} implies that for $\bbf\in Y_n^{d-1}$ the subspace 
\begin{equation}
\begin{array}{l}
W_\bbf:=\CC[z_1,\dots,z_n]_{n(d-2)-(d-1)}f_1+\dots+\\
\vspace{-0.3cm}\\
\hspace{3cm}\CC[z_1,\dots,z_n]_{n(d-2)-(d-1)}f_n\subset\CC[z_1,\dots,z_n]_{n(d-2)}
\end{array}\label{subspace1}
\end{equation}
has codimension 1, with the line spanned by $\Jac(\bbf)$ being complementary to it.

Fix $\bbf\in Y_n^{d-1}$ and denote by $\omega_\bbf \co \Soc(M_\bbf)\ra\CC$ the linear isomorphism given by the condition $\omega_\bbf(\widehat{\Jac(\bbf)})=1$. Define a form ${\mathbf f}$ on $\CC^{n*}$ as follows. Fix $z^*\in\CC^{n*}$, let, as before, $z_1^*,\dots,z_n^*$ be the coordinates of $z^*$ with respect to the basis $z_1,\dots,z_n$, and set
$$
{\mathbf f}(z^*):=\omega_\bbf\left((z_1^*\widehat{z}_1+\dots+z_n^*\widehat{z}_n)^{n(d-2)}\right),\label{assocformdef1}
$$
where $\widehat{z}_j$ is the projection to $M_\bbf$ of the coordinate function $z_j\in\CC[z_1,\dots,z_n]$.

If  $i_1,\dots,i_n$ are nonnegative integers such that $i_1+\dots+i_n=n(d-2)$, the product $\widehat{z}_1^{i_1}\cdots \widehat{z}_n^{i_n}$ lies in $\Soc(M_\bbf)$, hence we have 
$$
\widehat{z}_1^{i_1}\cdots \widehat{z}_n^{i_n}=\mu_{i_1,\dots,i_n}(\bbf) \widehat{\Jac(\bbf)}\label{assocformexpppp1}
$$
for some $\mu_{i_1,\dots,i_n}(\bbf)\in\CC$. In terms of the coefficients $\mu_{i_1,\dots,i_n}(\bbf)$ the form ${\mathbf f}$ is written as 
\begin{equation}
{\mathbf f}(z^*)=\sum_{i_1+\cdots+i_n=n(d-2)}\frac{(n(d-2))!}{i_1!\cdots i_n!}\mu_{i_1,\dots,i_n}(\bbf)
z_1^{* i_1}\cdots z_n^{* i_n}.\label{assocformexpp2}
\end{equation}
One can view the expression in the right-hand side of (\ref{assocformexpp2}) as an element of $\CC[z_1^*,\dots,z_n^*]_{n(d-2)}$, where we regard $z_1^*,\dots,z_n^*$ as the basis of $\CC^{n**}$ dual to the basis $z_1,\dots,z_n$ of $\CC^{n*}$. Identifying $z^*_j\in\CC^{n**}$ with $e_j\in\CC^n$, we will think of ${\mathbf f}$ as the following element of $\CC[e_1,\dots,e_n]_{n(d-2)}$:
\begin{equation}
\sum_{i_1+\cdots+i_n=n(d-2)}\frac{(n(d-2))!}{i_1!\cdots i_n!}\mu_{i_1,\dots,i_n}(\bbf)
e_1^{i_1}\cdots e_n^{i_n}.\label{assocformexpp3}
\end{equation}
We call ${\mathbf f}$ given by expression (\ref{assocformexpp3}) the {\it associated form}\, of $\bbf$. Clearly, the associated form of $f\in X_n^d$ is the associated form of the gradient $(f_{z_{{}_1}},\dots,f_{z_{{}_n}})\in Y_n^{d-1}$.

We note that the associated form of $\bbf \in Y_n^{d-1}$ arises from the following invariantly defined map
$$
\fm/\fm^2 \to \Soc(M_\bbf),\quad x \mapsto y^{n(d-2)},\label{coordinatefree1}
$$
with $y\in\fm$ being any element that projects to $x\in\fm/\fm^2$. Indeed, ${\mathbf f}$ is derived from this map by identifying the target with $\CC$ via $\omega_{\bbf}$ and the source with $\CC^{n*}$ by mapping the image of $\widehat{z}_j$ in $\fm/\fm^2$ to the element $z_j$ of the basis $z_1,\dots,z_n$ of $\CC^{n*}$.

Again, it is not hard to show that each $\mu_{i_1,\dots,i_n}$ is a regular function on the affine variety $Y_n^{d-1}$ (cf.~\cite[the proof of Proposition 2.1]{I3}). Hence, we have 
$$
\mu_{i_1,\dots,i_n}=\frac{P_{i_1,\dots,i_n}}{\Res^{p_{i_1,\dots,i_n}}}\label{formulaformus1}
$$
for some $P_{i_1,\dots,i_n}\in\CC[(\CC[z_1,\dots,z_n]_{d-1})^{\oplus n}]$ and nonnegative integer $p_{i_1,\dots,i_n}$. Thus, we arrive at the morphism
$$
\Psi \co Y_n^{d-1}\ra \CC[e_1,\dots,e_n]_{n(d-2)},\quad \bbf\mapsto {\mathbf f}
$$
of affine algebraic varieties. Using the polar pairing, we may regard the associated form as an element of $\CC[z_1,\dots,z_n]_{n(d-2)}^*$, in which case $\Psi$ turns into a morphism from $Y_n^{d-1}$ to $\CC[z_1,\dots,z_n]_{n(d-2)}^*$; we call it $\tilde\Psi$.

The morphism $\tilde\Psi$ is easy to describe. For $\bbf\in Y_n^{d-1}$, denote by $\tilde\omega_\bbf$ the element of $\CC[z_1,\dots,z_n]_{n(d-2)}^*$ such that: 
\begin{itemize}

\item[(i)] $\ker\tilde\omega_\bbf=W_\bbf$ with $W_\bbf$ introduced in (\ref{subspace1}), and 

\vspace{0.1cm}
\item[(ii)]  $\tilde\omega_\bbf(\Jac(\bbf))=1$. 

\end{itemize}

\noindent Clearly, $\mu_{i_1,\dots,i_n}(\bbf)=\tilde\omega_\bbf(z_1^{i_1}\cdots z_n^{i_n})$ for $i_1+\dots+i_n=n(d-2)$. We have a fact analogous to Proposition \ref{newmap}:

\begin{proposition}\label{newmap1}
The morphism
$$
\tilde\Psi: Y_n^{d-1}\to \CC[z_1,\dots,z_n]_{n(d-2)}^*
$$
sends an $n$-tuple $\bbf$ to $(n(d-2))!\,\tilde\omega_\bbf$.
\end{proposition}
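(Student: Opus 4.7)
My plan is to prove Proposition \ref{newmap1} by direct computation, mirroring the proof of Proposition \ref{newmap}. The key observation is that the result reduces to evaluating both sides of the claimed equality on the standard monomial basis $\{z_1^{i_1}\cdots z_n^{i_n}: i_1+\cdots+i_n = n(d-2)\}$ of $\CC[z_1,\dots,z_n]_{n(d-2)}$.

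First I would verify the identity
$$
\tilde\omega_\bbf(z_1^{i_1}\cdots z_n^{i_n}) = \mu_{i_1,\dots,i_n}(\bbf)
$$
for every multi-index with $i_1+\cdots+i_n = n(d-2)$. Since the subspace $W_\bbf$ together with the line $\CC\cdot\Jac(\bbf)$ span $\CC[z_1,\dots,z_n]_{n(d-2)}$ as a direct sum, and since by the defining relation (the analog of (\ref{assocformexpppp})) we have $\widehat{z}_1^{i_1}\cdots \widehat{z}_n^{i_n} = \mu_{i_1,\dots,i_n}(\bbf)\widehat{\Jac(\bbf)}$ in $M_\bbf$, there exists $w \in W_\bbf$ with $z_1^{i_1}\cdots z_n^{i_n} = \mu_{i_1,\dots,i_n}(\bbf)\Jac(\bbf) + w$. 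Applying $\tilde\omega_\bbf$ and using the two defining conditions (i), (ii) then gives the claim.

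Next I would unwind the identification $\CC[e_1,\dots,e_n]_{n(d-2)} \simeq \CC[z_1,\dots,z_n]_{n(d-2)}^*$ induced by the polar pairing (\ref{polarpairing}). By definition, $\tilde\Psi(\bbf)$ is the functional $g \mapsto g \diamond {\mathbf f}$, where ${\mathbf f}$ is given by (\ref{assocformexpp3}). For $g = z_1^{i_1}\cdots z_n^{i_n}$, the only monomial in ${\mathbf f}$ surviving the differential operator $\partial^{i_1+\cdots+i_n}/\partial e_1^{i_1}\cdots\partial e_n^{i_n}$ is $e_1^{i_1}\cdots e_n^{i_n}$, yielding
$$
z_1^{i_1}\cdots z_n^{i_n}\diamond {\mathbf f} = \frac{(n(d-2))!}{i_1!\cdots i_n!}\mu_{i_1,\dots,i_n}(\bbf)\cdot i_1!\cdots i_n! = (n(d-2))!\,\mu_{i_1,\dots,i_n}(\bbf).
$$
Combining this with the first step and extending by linearity over the monomial basis completes the proof.

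The argument is essentially a bookkeeping exercise; I do not anticipate a genuine obstacle. The only point that requires care is the consistent tracking of the identifications between $e_j$, the dual basis element $z_j^*$ of $\CC^{n**}$, and the coefficient extraction via the polar pairing, so that the combinatorial factor $(n(d-2))!/(i_1!\cdots i_n!)$ in (\ref{assocformexpp3}) correctly cancels against the factorials produced by differentiating $e_1^{i_1}\cdots e_n^{i_n}$, leaving the overall factor $(n(d-2))!$. Since Lemma \ref{fourconds} guarantees that $W_\bbf$ is indeed a hyperplane complementary to $\CC\cdot\Jac(\bbf)$ (so $\tilde\omega_\bbf$ is well defined), no Gorenstein-type input beyond what has already been invoked is needed.
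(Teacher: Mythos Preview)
Your proposal is correct and carries out precisely the ``straightforward calculation'' that the paper alludes to but omits; the paper states Proposition \ref{newmap1} without proof, noting only that it is analogous to Proposition \ref{newmap} (itself asserted without details). Your two steps---the identity $\tilde\omega_\bbf(z_1^{i_1}\cdots z_n^{i_n})=\mu_{i_1,\dots,i_n}(\bbf)$, which the paper in fact records explicitly just before the proposition, and the monomial-by-monomial evaluation of the polar pairing---are exactly what is needed.
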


We will now state the equivariance property of the morphisms $\Psi$, $\tilde\Psi$. First, notice that for any $k$ the group $\GL_n \times \GL_n$ acts on the vector space $(\CC[z_1,\dots,z_n]_k)^{\oplus n}$ via
$$
((C_1, C_2)  \bbf) (z_1,\dots,z_n) := \bbf ((z_1,\dots,z_n)C_1^{-T})C_2^{-1}\label{doubleaction}
$$
for $\bbf\in (\CC[z_1,\dots,z_n]_k)^{\oplus n}$ and $C_1,C_2 \in \GL_n$. We then have (see \cite[Lemma 2.7]{AI2}):

\begin{proposition}\label{L:equiv2} For every $\bbf\in Y_n^{d-1}$ and $C_1,C_2 \in \GL_n$ the following holds:
\begin{equation}
\begin{array}{l}
\displaystyle\Psi((C_1,C_2) \bbf)=\det(C_1 C_2)\Bigl (C_1 \Psi(\bbf )\Bigr)\,\,\hbox{and}\\
\vspace{-0.3cm}\\
\hspace{4cm}\tilde\Psi((C_1,C_2) \bbf)=\det(C_1 C_2)\Bigl(C_1 \tilde\Psi(\bbf)\Bigr). 
\end{array}\label{E:equiv2}
\end{equation}
\end{proposition}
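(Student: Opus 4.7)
The plan is to prove the statement for $\tilde\Psi$ first, via the characterization of $\tilde\omega_{\bbf}$ given just before Proposition \ref{newmap1}, and then deduce the statement for $\Psi$ using the fact that the polar pairing identification $\CC[e_1,\dots,e_n]_{n(d-2)}\simeq \CC[z_1,\dots,z_n]_{n(d-2)}^*$ is $\GL_n$-equivariant with respect to (\ref{dualactionfe}) and (\ref{dualdualhg}). Write $\bbf' := (C_1,C_2)\bbf$, so componentwise $\bbf'(z)=\bbf(zC_1^{-T})C_2^{-1}$, and let $C_1$ act on $\CC[z_1,\dots,z_n]$ by formula (\ref{actionfz}).

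The first key observation is that multiplication by the invertible matrix $C_2^{-1}$ does not alter the ideal generated, so $(f_1',\dots,f_n') = (C_1 f_1,\dots,C_1 f_n)$ in $\CC[z_1,\dots,z_n]$. Since $C_1$ preserves each graded component $\CC[z_1,\dots,z_n]_k$, this immediately gives $W_{\bbf'} = C_1 W_{\bbf}$ for the codimension-one subspaces defined in (\ref{subspace1}). Combining this with the definition $(C_1\tilde\omega_\bbf)(g) := \tilde\omega_\bbf(C_1^{-1}g)$ from (\ref{dualdualhg}), one checks that $\ker(C_1\tilde\omega_\bbf) = C_1 W_\bbf = W_{\bbf'} = \ker\tilde\omega_{\bbf'}$. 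Hence $\tilde\omega_{\bbf'}$ and $C_1\tilde\omega_\bbf$ are linear functionals on $\CC[z_1,\dots,z_n]_{n(d-2)}$ with the same codimension-one kernel, so there exists a nonzero scalar $\lambda\in\CC^*$ with $\tilde\omega_{\bbf'} = \lambda\,(C_1\tilde\omega_\bbf)$.

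The second key step is the transformation law for the Jacobian. A direct chain-rule computation yields
\[
\Jac(\bbf')(z) = \det(C_1^{-T})\,\Jac(\bbf)(zC_1^{-T})\,\det(C_2^{-1}) = \det(C_1 C_2)^{-1}\bigl(C_1\Jac(\bbf)\bigr)(z),
\]
so $C_1^{-1}\Jac(\bbf') = \det(C_1 C_2)^{-1}\Jac(\bbf)$. Evaluating the identity $\tilde\omega_{\bbf'} = \lambda(C_1\tilde\omega_\bbf)$ on $\Jac(\bbf')$ and using normalisation (ii) on both $\bbf$ and $\bbf'$ gives
\[
1 = \tilde\omega_{\bbf'}(\Jac(\bbf')) = \lambda\,\tilde\omega_\bbf(C_1^{-1}\Jac(\bbf')) = \lambda\det(C_1C_2)^{-1},
\]
so $\lambda = \det(C_1C_2)$. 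Multiplying by $(n(d-2))!$ and invoking Proposition \ref{newmap1} yields the second equation in (\ref{E:equiv2}), and passing through the equivariant polar-pairing identification yields the first.

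The main obstacle is simply bookkeeping: one must be careful with the order in which $C_1$ and $C_2$ act on polynomials versus on column vectors (the $C_1^{-T}$ in the substitution versus the right multiplication by $C_2^{-1}$), and with the fact that $C_1$ acts as an automorphism of $\CC[z_1,\dots,z_n]$ preserving each graded piece (so that $W_{\bbf'} = C_1 W_\bbf$), while $C_2$ enters only through its determinant by virtue of the multilinearity of $\Jac$. Once these conventions are fixed, the characterization of $\tilde\omega_\bbf$ by kernel plus normalisation reduces everything to a one-parameter scalar computation, and Lemma \ref{fourconds} guarantees that $\bbf'\in Y_n^{d-1}$ whenever $\bbf\in Y_n^{d-1}$ (since the resultant is a relative invariant, nonvanishing at $\bbf'$ iff at $\bbf$), so both sides are well defined.
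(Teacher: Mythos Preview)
The paper does not give its own proof of this proposition; it simply cites \cite[Lemma 2.7]{AI2}. Your argument is correct and complete: the characterization of $\tilde\omega_{\bbf}$ by its kernel $W_{\bbf}$ together with the normalization $\tilde\omega_{\bbf}(\Jac(\bbf))=1$ reduces the question to tracking how $W_{\bbf}$ and $\Jac(\bbf)$ transform under $(C_1,C_2)$, and you handle both correctly --- the ideal (hence $W_{\bbf}$) depends only on $C_1$, while $C_2$ enters the Jacobian only through its determinant. The passage from $\tilde\Psi$ to $\Psi$ via the equivariance of the polar pairing is also legitimate. This is essentially the standard way to prove such a statement, and one would expect the argument in \cite{AI2} to proceed along the same lines.
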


We conclude this subsection by observing that the morphisms $\Phi$, $\tilde\Phi$ can be factored as
\begin{equation}
\Phi=\Psi\circ\nabla|_{X_n^d},\quad\tilde\Phi=\tilde\Psi\circ\nabla|_{X_n^d},\label{decomposition}
\end{equation}
where $\nabla$ is the {\it gradient morphism}:
\begin{equation}
\nabla: \CC[z_1,\dots,z_n]_d\to (\CC[z_1,\dots,z_n]_{d-1})^{\oplus n},\quad f\mapsto (f_{z_{{}_1}},\dots,f_{z_{{}_n}}).\label{gradientmorph}
\end{equation}

\noindent Later on, this factorization will prove rather useful.

\subsection{Macaulay inverse systems and the image of $\Psi$}

We will now interpret the morphism $\Psi$ in different terms. Recall that the algebra $\CC[e_1,\dots,e_n]$ is a $\CC[z_1,\dots,z_n]$-module via differentiation:  
$$
\begin{array}{l}
\displaystyle(g \diamond F) (e_1, \ldots, e_n) := g\left(\frac{\partial}{\partial e_1}, \ldots, \frac{\partial}{\partial e_n}\right)F(e_1, \ldots, e_n),\\
\vspace{-0.3cm}\\
\hspace{7cm}g\in\CC[z_1,\dots,z_n],\, F\in\CC[e_1,\dots,e_n]. \label{pp}
\end{array}
$$
Restricting this module structure to $\CC[z_1,\dots,z_n]_k\times\CC[e_1,\dots,e_n]_k$, we obtain the perfect polar pairing described in (\ref{polarpairing}).  

For any $F\in\CC[e_1,\dots,e_n]_k$, we now introduce a homogeneous ideal, called the {\it annihilator}\, of $F$, as follows:
$$
F^{\perp} := \{g\in \CC[z_1,\dots,z_n]: g \diamond F = 0 \},
$$
which is clearly independent of scaling and thus is well-defined for $F$ in the projective space $\PP\,\CC[e_1,\dots,e_n]_k$ (from now on, we will sometimes think of forms as elements of the corresponding projective spaces, which will be clear from the context). It is well-known that the quotient $\CC[z_1,\dots,z_n]/F^{\perp}$ is a standard graded local Artinian Gorenstein algebra of socle degree $k$ and the following holds (cf.~\cite[Lemmas 2.12, 2.14]{IK}, \cite[Proposition 4]{Em}):

\begin{proposition} \label{prop-correspondence}
The correspondence $F \mapsto \CC[z_1,\dots,z_n]/F^{\perp}$ induces a bijection
$$
\PP\,\CC[e_1,\dots,e_n]_k \to
\left\{ 
 	\begin{array}{l} 
		\text{local Artinian Gorenstein algebras $\CC[z_1,\dots,z_n]/I$}\\ 
		\text{of socle degree $k$, where the ideal $I$ is homogeneous}\\
		 \end{array} \right\}.
$$
\end{proposition}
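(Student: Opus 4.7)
The plan is to verify well-definedness, construct an inverse explicitly, and then deduce both injectivity and surjectivity, with Gorenstein duality for standard graded Artinian algebras serving as the key technical input.

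First, for well-definedness: since $F$ is homogeneous of degree $k$, each homogeneous component of a $g\in F^\perp$ independently annihilates $F$, so $F^\perp$ is a homogeneous ideal, and $(cF)^\perp=F^\perp$ for nonzero $c$, so the assignment descends to $\PP\,\CC[e_1,\dots,e_n]_k$. Setting $A_F:=\CC[z_1,\dots,z_n]/F^\perp$, I would identify each graded piece $(A_F)_i$ with the subspace $V_i:=\CC[z_1,\dots,z_n]_i\diamond F\subseteq\CC[e_1,\dots,e_n]_{k-i}$ via $\widehat g\mapsto g\diamond F$. Since $V_i=0$ for $i>k$ and $V_k=\CC\cdot F$, this shows $A_F$ is a graded local Artinian algebra with one-dimensional top piece $(A_F)_k$. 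To pin down the socle: if $\widehat g\in\Soc(A_F)$ has degree $i<k$, then $z_j g\in F^\perp$ for all $j$, i.e., every partial derivative of $g\diamond F$ vanishes; as $g\diamond F$ is a positive-degree form with zero gradient, it must vanish, so $\widehat g=0$. Hence $\Soc(A_F)=(A_F)_k$ is one-dimensional, showing $A_F$ is Gorenstein of socle degree $k$.

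For surjectivity, let $A=\CC[z_1,\dots,z_n]/I$ be a standard graded local Artinian Gorenstein algebra of socle degree $k$, so $\Soc(A)=A_k$ has dimension one. I would pick any nonzero $\phi\co A_k\to\CC$, compose with the projection $\CC[z_1,\dots,z_n]_k\twoheadrightarrow A_k$, and use the nondegeneracy of the polar pairing (\ref{polarpairing}) to obtain a unique $F\in\CC[e_1,\dots,e_n]_k$ with $g\diamond F=\phi(\widehat g)$ for all $g\in\CC[z_1,\dots,z_n]_k$. The claim $F^\perp=I$ is then verified degree by degree: for $g\in\CC[z_1,\dots,z_n]_i$, the condition $g\diamond F=0$ is equivalent, by nondegeneracy of the polar pairing on $\CC[e_1,\dots,e_n]_{k-i}$, to $(hg)\diamond F=\phi(\widehat{hg})=0$ for every $h\in\CC[z_1,\dots,z_n]_{k-i}$; this says $\widehat g\cdot A_{k-i}=0$ in $A_k$, which under the Gorenstein hypothesis forces $\widehat g=0$, i.e., $g\in I$. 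Injectivity follows at once: the functional $g\mapsto g\diamond F$ on $\CC[z_1,\dots,z_n]_k$ descends to a functional on the one-dimensional $A_k$, so two forms with the same annihilator produce proportional functionals and hence agree up to a scalar.

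The main obstacle is the Gorenstein duality step in the surjectivity argument: converting one-dimensionality of $\Soc(A)$ into the statement that the multiplication $A_i\times A_{k-i}\to A_k\cong\CC$ is a perfect pairing for every $i$. This is classical for standard graded Artinian algebras, but it is the nonformal ingredient that turns the socle condition into the equality $F^\perp=I$; once it is in hand, the remainder of the proof is bookkeeping with the polar pairing.
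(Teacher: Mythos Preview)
Your argument is correct and follows the standard Macaulay inverse system/apolarity approach. The paper itself does not supply a proof of this proposition; it is stated as a well-known fact with references to \cite[Lemmas 2.12, 2.14]{IK} and \cite[Proposition 4]{Em}, and the proof you give is essentially the one found in those sources.

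One small slip: you write $V_k=\CC\cdot F$, but $V_k=\CC[z_1,\dots,z_n]_k\diamond F$ lies in $\CC[e_1,\dots,e_n]_0=\CC$, not in $\CC[e_1,\dots,e_n]_k$. What you need (and what holds, since $F\ne 0$) is simply that $V_k=\CC$ is one-dimensional; perhaps you were thinking of $V_0=\CC\cdot F$. This does not affect the argument. Your identification of the Gorenstein duality step---that one-dimensionality of $\Soc(A)$ forces the multiplication pairing $A_i\times A_{k-i}\to A_k$ to be perfect---as the essential nonformal ingredient is exactly right, and the sketch you give (any nonzero homogeneous element generates a homogeneous ideal meeting the socle nontrivially) is the usual justification.
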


We also note that the isomorphism classes of local Artinian Gorenstein algebras $\CC[z_1,\dots,z_n]/I$ of socle degree $k$, where the ideal $I$ is homogeneous, are in bijective correspondence with the linear equivalence classes (i.e., $\GL_n$-orbits) of nonzero elements of $\CC[e_1,\dots,e_n]_k$ (see \cite[Proposition 17]{Em} and cf.~\cite[formula (5.7)]{I4}). This correspondence is induced by the map $F \mapsto \CC[z_1,\dots,z_n]/F^{\perp}$, $F\in\CC[e_1,\dots,e_n]_k$.

Any form $F\in\CC[e_1,\dots,e_n]_k$ such that $F^{\perp}=I$ is called {\it a {\rm (}homogeneous{\rm )} Macaulay inverse system of\, $\CC[z_1,\dots,z_n]/I$} and its image in $\PP\,\CC[e_1,\dots,e_n]_k$ is called {\it the {\rm (}homogeneous{\rm )} Macaulay inverse system of\, $\CC[z_1,\dots,z_n]/I$}.

We have (see \cite[Proposition 2.11]{AI2}):

\begin{prop} \label{P:inverse-system} \it
For any $\bbf \in Y_n^{d-1}$, 
the associated form $\Psi(\bbf)$ is a Macaulay inverse system of the algebra $M_\bbf$.
\end{prop}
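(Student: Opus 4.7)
The goal is to show that ${\mathbf f}^{\perp} = (f_1, \dots, f_n)$ as ideals in $\CC[z_1,\dots,z_n]$, where ${\mathbf f} := \Psi(\bbf)$. I will argue this by establishing both containments, relying crucially on Proposition \ref{newmap1} and on the Gorenstein structure of $M_\bbf$ recorded in Lemma \ref{fourconds}.

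\emph{First containment $(f_1,\dots,f_n)\subseteq {\mathbf f}^{\perp}$.} Since $F^{\perp}$ is always an ideal (because the polar pairing is compatible with multiplication of differential operators, i.e., $(pq)\diamond F = p\diamond(q\diamond F)$), it suffices to verify that $f_i \diamond {\mathbf f} = 0$ for each $i$. The element $f_i \diamond {\mathbf f}$ lies in $\CC[e_1,\dots,e_n]_{n(d-2)-(d-1)}$, and by nondegeneracy of the polar pairing in that degree it vanishes iff $h \diamond (f_i \diamond {\mathbf f}) = 0$ for every $h\in\CC[z_1,\dots,z_n]_{n(d-2)-(d-1)}$. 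Rewriting, $h\diamond(f_i\diamond{\mathbf f}) = (hf_i)\diamond{\mathbf f}$, and under the identification from the polar pairing this equals $\tilde\Psi(\bbf)(hf_i) = (n(d-2))!\,\tilde\omega_\bbf(hf_i)$ by Proposition \ref{newmap1}. But $hf_i \in W_\bbf$ by the very definition (\ref{subspace1}) of $W_\bbf$, and $\ker\tilde\omega_\bbf = W_\bbf$ by property (i) of $\tilde\omega_\bbf$. Hence the expression vanishes.

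\emph{Second containment ${\mathbf f}^{\perp}\subseteq (f_1,\dots,f_n)$.} Suppose, for contradiction, that the inclusion is strict. Then $J:={\mathbf f}^{\perp}/(f_1,\dots,f_n)$ is a nonzero homogeneous ideal in $M_\bbf$. Because $M_\bbf$ is a graded Artinian local ring, any nonzero homogeneous ideal meets the socle: take a homogeneous element of $J$ of maximal degree; it must be annihilated by $\fm$ (otherwise multiplication by some $\widehat{z}_j$ would produce a homogeneous element of $J$ of larger degree), hence it lies in $\Soc(M_\bbf)$. By Lemma \ref{fourconds} the socle is one-dimensional, spanned by $\widehat{\Jac(\bbf)}$, so $\Jac(\bbf)\in {\mathbf f}^{\perp}$. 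This forces $\Jac(\bbf)\diamond{\mathbf f} = 0$, i.e., $\tilde\omega_\bbf(\Jac(\bbf)) = 0$, in direct conflict with property (ii) of $\tilde\omega_\bbf$, namely $\tilde\omega_\bbf(\Jac(\bbf))=1$.

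The only step that really uses the nontrivial content of the construction is the first containment, and the key mechanism there is Proposition \ref{newmap1} together with the interpretation of the polar pairing as a perfect duality of graded pieces. The second containment is a standard Gorenstein-socle argument. I do not anticipate genuine obstacles: once one sets up the reformulation of ${\mathbf f}$ as the functional $\tilde\Psi(\bbf)$, both containments reduce to one-line checks.
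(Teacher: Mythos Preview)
Your proof is correct. Both containments are argued cleanly: the first reduces to the vanishing of $\tilde\omega_\bbf$ on $W_\bbf$ via Proposition~\ref{newmap1} and nondegeneracy of the polar pairing, and the second is the standard fact that in a graded Artinian Gorenstein algebra every nonzero homogeneous ideal meets the one-dimensional socle.

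The paper itself does not prove this proposition here; it simply cites \cite[Proposition 2.11]{AI2}. Your argument is essentially the natural one and is almost certainly what appears in that reference (the ingredients---the description of $\tilde\Psi$ as $(n(d-2))!\,\tilde\omega_\bbf$ and the socle structure from Lemma~\ref{fourconds}---are exactly what is needed, and there is really only one way to assemble them). So there is nothing to compare: you have supplied the proof that the survey omits.

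One cosmetic remark: in the maximal-degree argument for the second containment, it may be worth saying explicitly that the socle of $M_\bbf$ is concentrated in degree $n(d-2)$ (which follows from Lemma~\ref{fourconds}), so that a nonzero homogeneous socle element is automatically a scalar multiple of $\widehat{\Jac(\bbf)}$. You use this implicitly when you pass from ``lies in $\Soc(M_\bbf)$'' to ``$\Jac(\bbf)\in{\mathbf f}^{\perp}$''.
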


\noindent By Proposition \ref{P:inverse-system}, the morphism $\Psi$ can be thought of as a map assigning to every element $\bbf \in Y_n^{d-1}$ a particular Macaulay inverse system of the algebra $M_\bbf$. Similarly, $\Phi$ assigns to every element $f \in X_n^d$ a particular Macaulay inverse system of $M_f$.

Let $U_n^{n(d-2)} \subset \CC[e_1,\dots,e_n]_{n(d-2)}$ be the locus of forms $F$ such that the subspace $F^{\perp} \cap\CC[z_1,\dots,z_n]_{d-1}$ is $n$-dimensional and has a basis with nonvanishing resultant. A description of $U_n^{n(d-2)}$ was given in \cite[Theorem 3.5]{I6}. It follows from this description (and is easy to see independently) that $U_n^{n(d-2)}$ is locally closed in $\CC[e_1,\dots,e_n]_{n(d-2)}$, hence is a quasi-affine variety. By Proposition \ref{P:inverse-system} we have $\im(\Psi)\subset U_n^{n(d-2)}$. In fact, one can show that $U_n^{n(d-2)}$ is exactly the image of $\Psi$:

\begin{prop} \label{P:image} \it $\im(\Psi)=U_n^{n(d-2)}$. 
\end{prop}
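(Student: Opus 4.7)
The containment $\im(\Psi)\subset U_n^{n(d-2)}$ is the content of Proposition~\ref{P:inverse-system}, so the task is to establish the reverse inclusion. Given $F\in U_n^{n(d-2)}$, the strategy is to build a candidate preimage $\bbf\in Y_n^{d-1}$ directly out of the ideal $F^{\perp}$, verify that $\Psi(\bbf)$ recovers $F$ up to a nonzero scalar via Proposition~\ref{prop-correspondence}, and then absorb that scalar using the equivariance of $\Psi$ under the center of $\GL_n$.

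By the definition of $U_n^{n(d-2)}$, the $n$-dimensional space $F^{\perp}\cap \CC[z_1,\dots,z_n]_{d-1}$ admits a basis $(f_1,\dots,f_n)$ with $\Res(f_1,\dots,f_n)\ne 0$, so Lemma~\ref{fourconds} places $\bbf:=(f_1,\dots,f_n)$ in $Y_n^{d-1}$. Since each $f_i$ lies in the ideal $F^{\perp}$, there is a natural graded surjection $M_\bbf=\CC[z_1,\dots,z_n]/(f_1,\dots,f_n)\twoheadrightarrow A_F:=\CC[z_1,\dots,z_n]/F^{\perp}$. By Lemma~\ref{fourconds} and Proposition~\ref{prop-correspondence}, both $M_\bbf$ and $A_F$ are graded local Artinian Gorenstein algebras of socle degree $n(d-2)$. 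The pivotal point of the proof is that any graded surjection between two such algebras is automatically an isomorphism: for if the kernel $K$ contained a nonzero homogeneous element $x$ of some degree $k\le n(d-2)$, the perfect Gorenstein pairing $(M_\bbf)_k\times (M_\bbf)_{n(d-2)-k}\to (M_\bbf)_{n(d-2)}\cong\CC$ would supply $y$ with $xy$ spanning the socle of $M_\bbf$, placing that socle inside $K$ and forcing the socle of $A_F$ to vanish---which would contradict the socle degree of $A_F$. Thus $(f_1,\dots,f_n)=F^{\perp}$, and $M_\bbf\cong A_F$ as graded algebras.

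Now Proposition~\ref{P:inverse-system} says $\Psi(\bbf)$ is a Macaulay inverse system of $M_\bbf$, and by Proposition~\ref{prop-correspondence} the Macaulay inverse system of $A_F\cong M_\bbf$ is unique up to scalar, so $\Psi(\bbf)=cF$ for some $c\in\CC^{*}$. To conclude, I would apply Proposition~\ref{L:equiv2} with $C_1=I$ and $C_2=\mu^{-1}I$, which replaces $\bbf$ by $\mu\bbf$ and multiplies $\Psi(\bbf)$ by $\mu^{-n}$; choosing $\mu$ to be any $n$th root of $c$ yields $\bbf'\in Y_n^{d-1}$ with $\Psi(\bbf')=F$, establishing $F\in\im(\Psi)$. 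The only non-routine step is the Gorenstein surjection argument above; the rest amounts to linking up standard properties of annihilators and inverse systems.
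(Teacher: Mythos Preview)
Your proof is correct and follows the same overall strategy as the paper: pick a basis $\bbf$ of $F^{\perp}\cap\CC[z_1,\dots,z_n]_{d-1}$, show that $\Psi(\bbf)$ is proportional to $F$, then scale. The difference lies in how proportionality is established. You prove the full ideal equality $(f_1,\dots,f_n)=F^{\perp}$ via the general fact that a graded surjection between Artinian Gorenstein algebras of the same socle degree is an isomorphism. The paper instead works only at the single degree $n(d-2)$: since $I:=(f_1,\dots,f_n)\subset F^{\perp}$ and both $I_{n(d-2)}$ and $F^{\perp}_{n(d-2)}$ have codimension~$1$ in $\CC[z_1,\dots,z_n]_{n(d-2)}$, the inclusion forces $I_{n(d-2)}=F^{\perp}_{n(d-2)}$, which already pins down $\Psi(\bbf)$ and $F$ as proportional (they correspond to linear functionals with the same hyperplane kernel under the polar pairing). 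The paper's route is shorter for this particular statement, while your Gorenstein-surjection lemma recovers more---the equality of the full ideals---and is a useful fact in its own right. You also spell out the scaling step via Proposition~\ref{L:equiv2}, which the paper leaves implicit.
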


\begin{proof} If $F \in U_n^{n(d-2)}$, then for the ideal $I\subset\CC[z_1,\dots,z_n]$ generated by the subspace $F^{\perp} \cap\CC[z_1,\dots,z_n]_{d-1}$ we have $I \subset F^{\perp}$. Hence, one has the inclusion $I_{n(d-2)} \subset F^{\perp}_{n(d-2)}$ of the $n(d-2)$th graded components of these ideals. As both $I_{n(d-2)}$ and $F^{\perp}_{n(d-2)}$ have codimension 1 in $\CC[z_1,\dots,z_n]_{n(d-2)}$, it follows that $I_{n(d-2)}=F^{\perp}_{n(d-2)}$. By Proposition \ref{P:inverse-system}, for any basis $\bbf$ of $F^{\perp} \cap\CC[z_1,\dots,z_n]_{d-1}$ the associated form $\Psi(\bbf)$ is proportional to $F$, and therefore $F\in\im(\Psi)$.\end{proof}

\noindent In the next subsection we will state a projectivized variant of this proposition.

\subsection{Projectivizations of $\Phi$ and $\Psi$} 
The constructions of the morphisms $\Phi$ and $\Psi$ can be projectivized. Let $\PP X_n^d$ be the image of $X_n^d$ in the projective space 
$\PP\,\CC[z_1,\dots,z_n]_d$; it consists of all lines spanned by forms with nonzero discriminant. The discriminant on $\CC[z_1,\dots,z_n]_d$ descends to a section of a line bundle over $\PP\,\CC[z_1,\dots,z_n]_d$, and $\PP X_n^d$ is the affine open subset of $\PP\,\CC[z_1,\dots,z_n]_d$ where this section does not vanish (see Subsection \ref{reviewGIT} for details). The definition of the associated form of a form in $X_n^d$ (or, alternatively, equivariance property (\ref{equivarphitildephi})) yields that the morphism $\Phi$ descends to an $\SL_n$-equivariant morphism
$$
\PP\Phi \co \PP X_n^d \to \PP\,\CC[e_1,\dots,e_n]_{n(d-2)}.
$$
By Proposition \ref{P:inverse-system}, the morphism $\PP\Phi$ can be regarded as a map assigning to every line ${\mathcal L}\in\PP X_n^d$\, {\it the}\, Macaulay inverse system of the algebra $M_f$, where $f$ is any form that spans ${\mathcal L}$. Notice that by Example \ref{E:example1} this morphism is not injective. 

Next, let $Z_n^{d-1}$ be the image of $Y_n^{d-1}$ in the Grassmannian $\Gr(n,\CC[z_1,\dots,z_n]_{d-1})$ of $n$-dimensional subspaces of $\CC[z_1,\dots,z_n]_{d-1}$; it consists of all $n$-dimensional subspaces of $\CC[z_1,\dots,z_n]_{d-1}$ having a basis with nonzero resultant. The resultant $\Res$ on $(\CC[z_1,\dots,z_n]_{d-1})^{\oplus n}$ descends to a section of a line bundle over the Grassmannian $\Gr(n, \CC[z_1,\dots,z_n]_{d-1})$, and $Z_n^{d-1}$ is the affine open subset of $\Gr(n,\CC[z_1,\dots,z_n]_{d-1})$ where this section does not vanish (see Subsection \ref{reviewGIT}). Equivariance property (\ref{E:equiv2}) shows that the morphism $\Psi$ induces an $\SL_n$-equivariant morphism
$$
\PP\Psi \co Z_n^{d-1} \to \PP\,\CC[e_1,\dots,e_n]_{n(d-2)}.
$$ 
By Proposition \ref{P:inverse-system}, the morphism $\PP\Psi$ can be thought of as a map assigning to every subspace in $\Gr(n,\CC[z_1,\dots,z_n]_{d-1})$ the Macaulay inverse system of the algebra $M_\bbf$, with $\bbf=(f_1,\dots,f_n)$ being any basis of the subspace.

Proposition \ref{P:image} yields $\im(\PP\Psi)=\PP U_n^{n(d-2)}$, where $\PP U_n^{n(d-2)}$ is the image of $U_n^{n(d-2)}$ in the projective space $\PP\,\CC[e_1,\dots,e_n]_{n(d-2)}$. Clearly, $\PP U_n^{n(d-2)}$ is locally closed, hence is a quasi-projective variety. With a little extra effort one obtains (see \cite[Proposition 2.13]{AI2}):

\begin{prop} \label{P:imagehat} \it The morphism $\PP\Psi:Z_n^{d-1}\to\PP U_n^{n(d-2)}$ is an isomorphism.
\end{prop}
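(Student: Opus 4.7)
The strategy is to construct an explicit regular inverse to $\PP\Psi$ using Macaulay duality. Bijectivity will follow quickly from Propositions~\ref{P:inverse-system} and~\ref{P:image}; the only nontrivial point is regularity of the inverse.

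First I would verify that $\PP\Psi$ is well-defined. A change of basis of a tuple $\bbf\in Y_n^{d-1}$ corresponds to acting by some element $(I,C_2)\in\GL_n\times\GL_n$, and equivariance property~(\ref{E:equiv2}) yields $\Psi((I,C_2)\bbf)=\det(C_2)\Psi(\bbf)$. Hence the projective class $[\Psi(\bbf)]$ depends only on the span of $\bbf$, so $\Psi$ descends to a morphism $\PP\Psi\co Z_n^{d-1}\to\PP\,\CC[e_1,\dots,e_n]_{n(d-2)}$, whose image equals $\PP U_n^{n(d-2)}$ by Proposition~\ref{P:image}.

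Next I would introduce a candidate inverse $\sigma\co\PP U_n^{n(d-2)}\to Z_n^{d-1}$ defined by $[F]\mapsto F^\perp\cap\CC[z_1,\dots,z_n]_{d-1}$. By the very definition of $U_n^{n(d-2)}$, this yields an $n$-dimensional subspace admitting a basis with nonvanishing resultant, so the target really does lie in $Z_n^{d-1}$. For regularity, I would view the polar pairing as a bilinear map
$$
\CC[z_1,\dots,z_n]_{d-1}\times\CC[e_1,\dots,e_n]_{n(d-2)}\to\CC[e_1,\dots,e_n]_{n(d-2)-(d-1)},\ (g,F)\mapsto g\diamond F.
$$
Fixing $F$ in the right-hand argument gives, over the affine variety $U_n^{n(d-2)}$, a morphism of trivial vector bundles of constant rank $\dim\CC[z_1,\dots,z_n]_{d-1}-n$. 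Its kernel is therefore a rank-$n$ algebraic subbundle, defining a regular map into $\Gr(n,\CC[z_1,\dots,z_n]_{d-1})$. Since this kernel is invariant under scaling $F$, the map descends to the desired regular $\sigma$.

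Finally, I would check that $\sigma$ is a two-sided inverse to $\PP\Psi$. For $V\in Z_n^{d-1}$ with any basis $\bbf=(f_1,\dots,f_n)$, Proposition~\ref{P:inverse-system} gives $\Psi(\bbf)^\perp=(f_1,\dots,f_n)$ as ideals in $\CC[z_1,\dots,z_n]$, whose degree $d-1$ component equals $\Span(f_1,\dots,f_n)=V$ because the condition $\Res(\bbf)\ne 0$ forces $f_1,\dots,f_n$ to be linearly independent. Hence $\sigma\circ\PP\Psi=\id_{Z_n^{d-1}}$. Conversely, the argument already used in the proof of Proposition~\ref{P:image} shows that for $[F]\in\PP U_n^{n(d-2)}$ any basis $\bbf$ of $\sigma([F])$ satisfies $\Psi(\bbf)\propto F$, giving $\PP\Psi\circ\sigma=\id$. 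The main obstacle is the regularity verification for $\sigma$; once the constant-rank argument over $U_n^{n(d-2)}$ is carried out, the two-sided inverse check is an essentially formal consequence of the Macaulay duality machinery already established in this subsection.
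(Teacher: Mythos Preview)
Your proposal is correct and follows essentially the same approach as the paper: both construct the inverse morphism $[F]\mapsto F^\perp\cap\CC[z_1,\dots,z_n]_{d-1}$ (the paper calls it $\chi$) and use Proposition~\ref{P:inverse-system} to verify $\chi\circ\PP\Psi=\id$. The one difference is in the closing step. You verify the other composition $\PP\Psi\circ\chi=\id$ directly by invoking the argument from the proof of Proposition~\ref{P:image}. The paper instead notes that $\PP\Psi$ is a section of the separated morphism $\chi$ and appeals to the general fact that sections of separated morphisms are closed immersions (\cite[Remark 9.11]{GW}); combined with the surjectivity from Proposition~\ref{P:image} and reducedness of the target, this yields the isomorphism without checking the second composition. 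Your route is slightly more hands-on, and your constant-rank argument for the regularity of $\chi$ makes explicit a detail the paper leaves implicit.
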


\begin{proof} The morphism $\chi \co \PP U_n^{n(d-2)} \to Z_n^{d-1}$ given by $F \mapsto F^{\perp} \cap \CC[z_1,\dots,z_n]_{d-1}$ yields the diagram
$$\xymatrix{
Z_n^{d-1} \ar[r]^{\hspace{-0.3cm}\PP\Psi} \ar[rd]^{{\mathrm {id}}}		&\PP U_n^{n(d-2)}\ar[d]^{\chi} \\
											& Z_n^{d-1},
}$$		
which is commutative by Proposition \ref{P:inverse-system}. As $\chi$ is separated, it follows that $\PP\Psi$ is an isomorphism (see \cite[Remark 9.11]{GW}).\end{proof}

\noindent By Proposition \ref{P:imagehat}, the map $\PP\Psi:Z_n^{d-1}\to\PP\CC[e_1,\dots,e_n]_{n(d-2)}$ is a locally closed immersion, i.e., an isomorphism onto a locally closed subset of $\PP\CC[e_1,\dots,e_n]_{n(d-2)}$.

Next, consider an open subset of $\CC[z_1,\dots,z_n]_d$:
$$
W_n^d:=\CC[z_1,\dots,z_n]_d\setminus\left\{f\in\CC[z_1,\dots,z_n]_d:f_{z_{{}_1}},\dots,f_{z_{{}_n}}\,\hbox{are linearly dependent}\right\}.
$$
Clearly, we have $X_n^d\subset W_n^d$. The gradient morphism $\nabla$ introduced in (\ref{gradientmorph}) indices the morphism
$$
W_n^d\to\Gr(n,\CC[z_1,\dots,z_n]_{d-1}),\,\,\, f\mapsto\langle f_{z_{{}_1}},\dots,f_{z_{{}_n}}\rangle,
$$
where $\langle{}\cdot{}\rangle$ denotes linear span. This morphism descends to an $\SL_n$-equivariant morphism
$$
\PP\nabla\co\PP W_n^d\to\Gr(n,\CC[z_1,\dots,z_n]_{d-1}),
$$
where $\PP W_n^d$ is the (open) image of $W_n^d$ in the projective space $\PP\,\CC[z_1,\dots,z_n]_d$. Clearly, $\PP\nabla$ maps $\PP X_n^d$ into $Z_n^{d-1}$, and from (\ref{decomposition}) we obtain
\begin{equation}
\PP\Phi=\PP\Psi\circ\PP\nabla|_{\PP X_n^d}.\label{decommpossition}
\end{equation}
This factorization will be of importance to us in relation with Conjecture \ref{conj2}.

\section{Results and open problems related to Conjecture \ref{conj2}}\label{results}
\setcounter{equation}{0}

\subsection{Review of Geometric Invariant Theory}\label{reviewGIT} We start this section by giving a brief overview of some of the concepts of Geometric Invariant Theory, or GIT. The principal reference for GIT is \cite{MFK}, but we will follow the more elementary expositions given in \cite{Ne} and \cite[Chapter 9]{LR}.

First of all, recall that an (affine) algebraic group $G$ is called {\it reductive}\, if its unipotent radical is trivial. Since we only consider algebraic groups over $\CC$, this condition is equivalent to $G$ being the affine algebraic complexification of a compact group $K$; in this case 
$K\xhookrightarrow{}G$ is the universal complexification of $K$ (see, e.g., \cite[p.~247]{VO}, \cite[Theorems 5.1, 5.3]{Ho}). The groups $\GL_n$ and $\SL_n$ are examples of reductive groups being the affine algebraic complexifications of $\U_n$ and $\SU_n$, respectively. 

Let $X$ be an algebraic variety and $G$ a reductive group acting algebraically on $X$. For any open $G$-invariant subset $U$ of $X$ denote by ${\mathcal O}_X(U)^G$ the algebra of $G$-invariant regular functions of $X$ on $U$. A {\it good quotient} of $X$ by $G$ is a pair $(Z,\pi)$, where $Z$ is an algebraic variety and $\pi \co X\ra Z$ is a morphism such that:
\begin{enumerate}
\item[(P1)] $\pi$ is surjective;
\item[(P2)] $\pi$ is $G$-invariant, i.e., $\pi(gx)=\pi(x)$ for all $g\in G$ and $x\in X$;
\item[(P3)] $\pi$ is affine, i.e., the inverse image of an open affine subset of $Z$ is an open affine subset of $X$;
\item[(P4)]  the induced map
$$
\pi^*\co\mathcal{O}_Z(U) \to \mathcal{O}_X(\pi^{-1}(U))^G,\quad f\mapsto f\circ\pi
$$
is an isomorphism for every open subset $U\subset Z$.
\end{enumerate} 

The good quotient $Z$, if exists, possesses the following additional properties: 
\begin{enumerate}
\item[(P5)] for $x,x'\in X$ one has $\pi(x)=\pi(x')$ if and only if $\overline{G \cdot x}\cap\overline{G \cdot x'}\ne\emptyset$ (where $G \cdot x$ is the $G$-orbit of $x$), and every fiber of $\pi$ contains exactly one closed $G$-orbit (the unique orbit of minimal dimension); hence if $S_1$, $S_2$ are closed disjoint $G$-invariant subsets of $X$, then $\pi(S_1)\cap\pi(S_2)=\emptyset$;
\item[(P6)] if $U\subset X$ is a {\it saturated}\, open subset, (i.e., an open subset satisfying\linebreak $U=\pi^{-1}(\pi(U))$), then $\pi(U)$ is open and $(\pi(U),\pi|_U)$ is a good quotient of $U$;
\item[(P7)] if $A$ is a $G$-invariant closed subset of $X$, then $\pi(A)$ is closed in $Z$ and $(\pi(A),\pi|_A)$ is a good quotient of $A$;
\item[(P8)] if $X$ is normal, so is $Z$;
\item[(P9)] if $Y$ is an algebraic variety and $\varphi \co X\ra Y$ is a $G$-invariant morphism, then there exists a unique morphism $\tau_{\varphi} \co Z\ra Y$ such that $\varphi=\tau_{\varphi}\circ\pi$.
\end{enumerate}

In most situations, the construction of the morphism $\pi$ will be clear from the context, therefore we usually apply the term \lq\lq good quotient\rq\rq\, to the variety $Z$ rather than the pair $(Z,\pi)$. A good quotient, if exists, is unique up to isomorphism and is denoted by $X\gitq G$. If every fiber of $\pi$ consists of a single (closed) orbit, the quotient $X\gitq G$ is called {\it geometric}.

We will now describe two cases when good quotients are known to exist.
\vspace{0.25cm}

{\bf Case 1.} Assume that $X$ is an affine variety, so $X=\Spec\CC[X]$, where the coordinate ring $\CC[X]$ is finitely generated. Clearly, $G$ acts on $\CC[X]$, and this action is rational (see, e.g., \cite[p.~47]{Ne} for the definition of a rational action on an algebra). We now note that over $\CC$ the condition of reductivity for affine algebraic groups is equivalent to those of {\it linear reductivity}\, and {\it geometric reductivity}\, (see \cite[pp.~96--98]{LR} for details). Then by the Gordan-Hilbert-Mumford-Nagata theorem (see \cite{G}, \cite{Hi1}, \cite{Hi2}, \cite[p.~29]{MFK}, \cite{Na}), the algebra of invariants $\CC[X]^G$ is finitely generated. Choose generators $f_1,\dots,f_m$ of $\CC[X]^G$ and set
$
\pi:= (f_1,\dots,f_m): X\ra \CC^m.
$
Next, consider the ideal
\begin{equation}
I:=\{g\in\CC[w_1,\dots,w_m]: g\circ\pi=0\}.\label{idealforquotient}
\end{equation}
Clearly, $I$ is a radical ideal in $\CC[w_1,\dots,w_m]$ and $\CC[X]^G\simeq\CC[w_1,\dots,w_m]/I$. Let
\begin{equation}
Z:=\{w\in\CC^m: g(w) = 0\,\,\hbox{for all}\,\, g\in I\}.\label{affinequotient}
\end{equation}
It can be shown that the affine variety $Z$ is a good quotient of $X$. In other words, one has $X\gitq G=\Spec\CC[X]^G$.

If $V$ is a vector space over $\CC$, then $V\setminus\{0\}\gitq\CC^*$ is the projective space $\PP V$, with $\pi:V\setminus\{0\}\to\PP V$ being the natural projection. Note that every $\CC^*$-invariant open subset of $V\setminus\{0\}$ is saturated with respect to $\pi$. Hence, by property (P6) we see $\PP X_n^d=X_n^d\gitq\CC^*$ and $\PP W_n^d=W_n^d\gitq\CC^*$. Also, using properties (P6) and (P7) one observes $\PP U_n^{n(d-2)}=U_n^{n(d-2)}\gitq\CC^*$. 

Let $N:=\dim_{\CC}V$. For $\ell\le N$ setting
$$
S(\ell,V):=V^{\oplus \ell}\setminus\{(v_1,\dots,v_\ell)\in V^{\oplus \ell}: v_1,\dots, v_\ell\,\hbox{are linearly dependent}\}, 
$$
we have $\Gr(\ell,V)=S(\ell,V)\gitq\GL_\ell$ with
$$
\pi\co S(\ell,V)\to \Gr(\ell,V),\quad (v_1,\dots,v_\ell)\mapsto \langle v_1,\dots,v_\ell\rangle.
$$
Since $Y_n^{d-1}$ is saturated in $S(n,\CC[z_1,\dots,z_n]_{d-1})$, it follows that $Z_n^{d-1}=Y_n^{d-1}\gitq \GL_n$.
   
{\bf Case 2.} To describe this case, we need to give some definitions. Let $G$ be a reductive group with a linear representation $G\rightarrow\GL(V)$ on a vector space $V$ of dimension $N$, and $X\subset V$ a $G$-invariant affine algebraic subvariety with the algebraic action of $G$ induced from that on $V$. A point $x\in X$ is called {\it semistable}\, if the closure of the orbit $G\cdot x$ does not contain $0$, {\it polystable}\, if $x\ne 0$ and $G\cdot x$ is closed, and {\it stable}\, if $x$ is polystable and $\dim G\cdot x=\dim G$ (or, equivalently, the stabilizer of $x$ is zero-dimensional). The three loci are denoted by $X^{\ss}$, $X^{\ps}$, and $X^{\s}$, respectively. Clearly, $X^{\s}\subset X^{\ps} \subset X^{\ss}$.

Let now $X \subset\PP V$ be a $G$-invariant projective algebraic variety with the algebraic action of $G$ induced from that on $\PP V$. Then the semistability, polystability and stability of a point $x\in X$ are understood as the corresponding concepts for some (hence every) point $\hat x$ lying over $x$ in the affine cone $\hat X\subset V$ over $X$. We denote the three loci by $X^{\ss}$, $X^{\ps}$, and $X^{\s}$, respectively. One has $X^{\s}\subset X^{\ps}\subset X^{\ss}$. The loci $X^{\s}$ and $X^{\ss}$ are open subsets of $X$, and the following holds:
$$
\begin{array}{l}
X^{\ps}=\{x\in X^{\ss}: \hbox{$G\cdot x$ is closed in $X^{\ss}$}\},\\
\vspace{-0.3cm}\\
X^{\s}= \{x\in X^{\ss}: \hbox{$G\cdot x$ is closed in $X^{\ss}$ and $\dim G\cdot x=\dim G$}\}.
\end{array}
$$

Choose coordinates $x_1,\dots,x_N$ in $V$. Then by \cite[Proposition 9.5.2.2]{LR}, the semi\-stability of a point $x\in X$ is characterized by the existence of a $G$-invariant homogeneous form of positive degree in  $x_1,\dots,x_N$ nonvanishing at some (hence every) lift $\hat x$ of $x$. In fact, for any nonnegative integer $k$ any element of $\CC[x_1,\dots,x_N]_k$ can be identified with a global section of the $k$th tensor power $H^{\otimes k}$ of the {\it hyperplane line bundle}\, $H$ on $\PP V$ (see, e.g., \cite[Example 13.16]{GW}). Therefore the condition of the nonvanishing of a $G$-invariant homogeneous form at $\hat x$ is equivalent to that of the nonvanishing at $x$ of the corresponding global $G$-invariant section of a power\linebreak of $H$.  

For instance, let us think of the Grassmannian $\Gr(n,\CC[z_1,\dots,z_n]_{d-1})$ as the projective variety in $\PP\bigwedge^n\CC[z_1,\dots,z_n]_{d-1}$ obtained via the Pl\"ucker embedding. It then follows that $Z_n^{d-1}$ lies in $\Gr(n,\CC[z_1,\dots,z_n]_{d-1})^{\ss}$ since $Z_n^{d-1}$ consists exactly of the elements of $\Gr(n,\CC[z_1,\dots,z_n]_{d-1})$ at which the resultant $\Res$, understood as the restriction of a global $\SL_n$-invariant section of $H^{\otimes(d-1)^{n-1}}$ to $\Gr(n,\CC[z_1,\dots,z_n]_{d-1})$, does not vanish. This description of $Z_n^{d-1}$ is a consequence of \cite[p.~257, Corollary 2.3 and p.~427, Proposition 1.1]{GKZ}. Similarly, we have $\PP X_n^d\subset\PP\CC[z_1,\dots,z_n]_d^{\ss}$ since $\PP X_n^d$ consists exactly of the elements of $\PP\CC[z_1,\dots,z_n]_d$ at which the discriminant $\Delta$, understood as a global $\SL_n$-invariant section of $H^{\otimes n(d-1)^{n-1}}$, does not vanish. In fact, by \cite[Proposition 4.2]{MFK}, we have $\PP X_n^d\subset\PP\CC[z_1,\dots,z_n]_d^{\s}$. 

Returning to the general setting, let $L:=H|_X$. One can show that for all sufficiently high $k$ every global section of $L^{\otimes k}$ is the restriction of a global section of $H^{\otimes k}$ to $X$ (see \cite[p.~13]{Ne}). Consider the algebra
$$
R:=\bigoplus_{k=0}^{\infty}R_k,
$$
where $R_k:=\Gamma(X,L^{\otimes k})$. It is finitely generated (see \cite[Chapter III, Theorem 5.2]{Hart} and \cite[Proposition 7.45]{GW}), and we have $X=\Proj R$ (see \cite[Proposition 13.74]{GW}).  

Now, the group $G$ rationally acts on $R$, and by the Gordan-Hilbert-Mumford-Nagata theorem the algebra of global $G$-invariant sections
$$
R^G=\bigoplus_{k=0}^{\infty}R_k^G
$$
is finitely generated over $R_0^G=\CC$. By \cite[Chapter III, \S1.3, Proposition 3]{Bo} (see also \cite[Lemma 13.10 and Remark 13.11]{GW}), we can find $p$ such that the {\it Veronese subalgebra} 
$$
R^{G(p)}:=\bigoplus_{k=0}^{\infty}R_{kp}^G
$$
is generated in degree $1$ over $R_0^G$, namely $R^{G(p)}=R_0^G[R_p^G]$. Let $f_1,\dots,f_m$ be degree $1$ generators of $R^{G(p)}$, and consider the rational map
$$
\pi\co X\xdashrightarrow{}\PP^{m-1},\quad [x_1:\dots:x_N]\mapsto [f_1(x_1,\dots,x_N):\cdots:f_m(x_1,\dots,x_N)].
$$
By \cite[Proposition 9.5.2.2]{LR}, the indeterminacy locus of this rational map is exactly the complement to the semistable locus $X^{\ss}$, so $\pi$ is a morphism from $X^{\ss}$ to $\PP^{m-1}$. 

Now, consider the ideal $I$ defined by formula (\ref{idealforquotient}). This ideal is homogeneous and is generated by all forms $g$ in $w_1,\dots,w_m$ such that $g\circ\pi=0$. Clearly, $I$ is a radical ideal in $\CC[w_1,\dots,w_m]$ and $R^{G(p)}\simeq\CC[w_1,\dots,w_m]/I$. Then, analogously to (\ref{affinequotient}) we set
$$
Z:=\{[w_1:\cdots:w_m]\in\PP^{m-1}: g(w_1,\dots,w_m) = 0\,\,\hbox{for all}\,\, g\in I\}.
$$
It can be shown that the projective variety $Z$ is a good quotient of $X^{\ss}$. In other words, one has $X^{\ss}\gitq G=\Proj R^{G(p)}$ (cf.~\cite[Proposition 13.12]{GW}), and by \cite[Remark 13.7]{GW} we see $X^{\ss}\gitq G=\Proj R^G$.

As the open subset $X^{\s}\subset X^{\ss}$ is saturated, $\pi(X^{\s})\subset X^{\ss}\gitq G$ is a good quotient of $X^{\s}$; this quotient is quasi-projective and geometric. 

\subsection{Interpretation of Conjecture \ref{conj2} via GIT}

Recall that the image of the morphism $\PP\Psi$ coincides with $\PP U_n^{n(d-2)}$ (see Proposition \ref{P:imagehat}). By \cite[Theorem 1.2]{F} (see also \cite{FI}), the variety $\PP U_n^{n(d-2)}$ lies in $\PP\CC[e_1,\dots,e_n]_{n(d-2)}^{\ss}$. Properties (\ref{E:equiv2}) and (P9) then show that there exists a morphism
$$
\overline{\PP\Psi}\co Z_n^{d-1}\gitq\SL_n \to \PP\CC[e_1,\dots,e_n]_{n(d-2)}^{\ss}\gitq\SL_n
$$
of good GIT quotients by $\SL_n$ such that the following diagram commutes:
$$
\xymatrix{
Z_n^{d-1} \ar[r]^{\hspace{-0.8cm}{\PP\Psi}} \ar[d]_{\pi_2} 		& \PP\CC[e_1,\dots,e_n]_{n(d-2)}^{\ss} \ar[d]^{\pi_1} \\
Z_n^{d-1}\gitq\SL_n	\ar[r]^{\hspace{-1.2cm}{{\overline{\PP\Psi}}}}			&\PP\CC[e_1,\dots,e_n]_{n(d-2)}^{\ss}\gitq\SL_n
}
$$
(here and below we denote by $\pi_1,\pi_2,\dots$ the relevant quotient projections). Notice that $Z_n^{d-1}\gitq\SL_n$ is affine while $\PP\CC[e_1,\dots,e_n]_{n(d-2)}^{\ss}\gitq\SL_n$ is projective. 

Next, the morphism $\PP\nabla|_{\PP X_n^d}$ leads to a morphism of good affine GIT quotients
$$
\overline{\PP\nabla|_{\PP X_n^d}}\co\PP X_n^d \gitq\SL_n \to Z_n^{d-1}\gitq\SL_n
$$
and a commutative diagram
$$
\xymatrix{
\PP X_n^d \ar[r]^{\hspace{-0.4cm}{\PP\nabla|_{\PP X_n^d}}} \ar[d]_{\pi_3} 		& Z_n^{d-1} \ar[d]^{\pi_2} \\
\PP X_n^d\gitq\SL_n	\ar[r]^{\hspace{-0.4cm}{\overline{\PP\nabla|_{\PP X_n^d}}}}			&Z_n^{d-1}\gitq\SL_n.
}
$$

Recalling factorization (\ref{decommpossition}), we now see that $\PP\Phi$ maps $\PP X_n^d$ to the semistable locus $\PP\CC[e_1,\dots,e_n]_{n(d-2)}^{\ss}$ and that the morphism
$$
\overline{\PP\Phi}\co\PP X_n^d \gitq\SL_n\to\PP\CC[e_1,\dots,e_n]_{n(d-2)}^{\ss}\gitq\SL_n
$$
corresponding to the commutative diagram
$$
\xymatrix{
\PP X_n^d \ar[r]^{\hspace{-0.8cm}{\PP\Phi}} \ar[d]_{\pi_3} 		& \PP\CC[e_1,\dots,e_n]_{n(d-2)}^{\ss} \ar[d]^{\pi_1} \\
\PP X_n^d\gitq\SL_n	\ar[r]^{\hspace{-1.2cm}{{\overline{\PP\Phi}}}}			&\PP\CC[e_1,\dots,e_n]_{n(d-2)}^{\ss}\gitq\SL_n
}
$$
factors as
\begin{equation}
{\overline{\PP\Phi}}={\overline{\PP\Psi}}\circ\overline{\PP\nabla|_{\PP X_n^d}}.\label{decomposi1}
\end{equation}

We will now relate the above facts to Conjecture {\rm \ref{conj2}}. The following claim corrects the assertion made in \cite{AI2} that the positive answer to Question 3.1, stated therein, yields the conjecture. The claim that appears below has been suggested to us by M.~Fedorchuk.

\begin{claim}\label{claimconj}
\it In order to establish Conjecture {\rm \ref{conj2}} it suffices to show that ${\overline{\PP\Phi}}$ is an isomorphism onto a closed subset of an affine open subset of the GIT quotient\, $\PP\CC[e_1,\dots,e_n]_{n(d-2)}^{\ss}\gitq\SL_n$.
\end{claim}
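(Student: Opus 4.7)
The plan is to take a regular $\GL_n$-invariant function $S$ on $X_n^d$, descend it to the affine GIT quotient $\PP X_n^d \gitq \SL_n$, transport and extend the resulting function across the hypothetical isomorphism $\overline{\PP\Phi}$, and finally lift the outcome to a $\GL_n$-invariant rational function on $\CC[e_1,\dots,e_n]_{n(d-2)}$ regular on all of $\Phi(X_n^d)$.

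First I would use the decomposition $\GL_n = \SL_n \cdot \CC^*$, where $\CC^*$ denotes the center of scalar matrices, which on $\CC[z_1,\dots,z_n]_d$ acts by weighted scaling. A regular $\GL_n$-invariant function on $X_n^d$ is thus simultaneously $\SL_n$-invariant and scaling-invariant, hence is precisely a regular function on $X_n^d \gitq \GL_n = \PP X_n^d \gitq \SL_n$. In particular, $S$ descends to a regular function $\bar S$ on this affine quotient.

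Next I would invoke the hypothesis that $\overline{\PP\Phi}\co \PP X_n^d \gitq \SL_n \to \PP\CC[e_1,\dots,e_n]_{n(d-2)}^{\ss}\gitq\SL_n$ is an isomorphism onto a closed subset $Z$ of some affine open subset $V$ of the target. Via $(\overline{\PP\Phi})^{-1}$ the function $\bar S$ transfers to a regular function $T$ on $Z$, and, since $V$ is affine, $T$ extends to a regular function $\tilde T$ on all of $V$. Pulling $\tilde T$ back through $\pi_1$ produces an $\SL_n$-invariant regular function on the open subset $\pi_1^{-1}(V) \subset \PP\CC[e_1,\dots,e_n]_{n(d-2)}^{\ss}$, which then lifts canonically to an $\SL_n$-invariant regular function on the cone $\widehat{\pi_1^{-1}(V)} \subset \CC[e_1,\dots,e_n]_{n(d-2)}$ that is homogeneous of degree $0$ under scaling. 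Being degree-zero and $\SL_n$-invariant, this function is automatically $\GL_n$-invariant; viewed as an element of the function field $\CC\bigl(\CC[e_1,\dots,e_n]_{n(d-2)}\bigr)$ it is a $\GL_n$-invariant rational function $R$ whose domain of regularity contains $\widehat{\pi_1^{-1}(V)}$.

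Finally, the commutativity of the quotient diagrams and the factorization (\ref{decommpossition}) yield $\pi_1\bigl(\PP\Phi(\PP X_n^d)\bigr) = \overline{\PP\Phi}\bigl(\pi_3(\PP X_n^d)\bigr) \subset Z \subset V$, whence $\Phi(X_n^d) \subset \widehat{\pi_1^{-1}(V)}$ lies inside the domain of regularity of $R$. The identity $R \circ \Phi = S$ then follows because both sides are regular $\GL_n$-invariants on $X_n^d$ whose descents to $\PP X_n^d \gitq \SL_n$ both equal $\bar S$ by construction, and a regular $\GL_n$-invariant on $X_n^d$ is determined by its descent. The point I expect to require most care is the use of the affineness of $V$ to extend $T$ from the closed subset $Z$ to all of $V$: if $V$ were merely an arbitrary open subset of the GIT quotient the extension step could fail, and without such a global extension one could not produce a single rational $\GL_n$-invariant $R$ defined on the whole of $\Phi(X_n^d)$.
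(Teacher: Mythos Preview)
Your proposal is correct and follows essentially the same route as the paper's proof: descend $S$ to $\bar S$ on $\PP X_n^d\gitq\SL_n$, push it across the assumed isomorphism $\overline{\PP\Phi}$ to the closed subset, extend over the affine open subset, and pull back via $\pi_1$ to obtain $R$. Your treatment is slightly more explicit in places---justifying the descent via $\GL_n=\SL_n\cdot\CC^*$ rather than citing property (P4), spelling out why the lifted function is $\GL_n$-invariant, and verifying $R\circ\Phi=S$ by comparing descents---but the argument and its key point (that affineness of the open set is what permits the extension from the closed subset) are identical to the paper's.
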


\begin{proof} Let $U\subset\PP\CC[e_1,\dots,e_n]_{n(d-2)}^{\ss}\gitq\SL_n$ be an affine open subset and $A\subset U$ a closed subset such that
$$
{\overline{\PP\Phi}}\co \PP X_n^d\gitq\SL_n\to A
$$
is an isomorphism. Fix a $\GL_n$-invariant regular function $S$ on $X_n^d$. By property (P4) it is the pullback of a uniquely defined regular function $\bar S$ on $\PP X_n^d\gitq\SL_n$. Let $T$ be the push-forward of $\bar S$ to $A$ by means of ${\overline{\PP\Phi}}$. Since $A$ is closed in $U$ and $U$ is affine, the function $T$ extends to a regular function on $U$. The pull-back of this function by means of $\pi_1$ yields an $\SL_n$-invariant regular function on the dense open subset $\pi_1^{-1}(U)$ of $\PP\CC[e_1,\dots,e_n]_{n(d-2)}^{\ss}$, hence a $\GL_n$-invariant rational function $R$ on $\CC[e_1,\dots,e_n]_{n(d-2)}$. Clearly, $R$ is defined on $\im(\Phi)$ and $R\circ\Phi=S$ as required by Conjecture {\rm \ref{conj2}}.\end{proof}

Factorization (\ref{decomposi1}) yields that in order to show that the map ${\overline{\PP\Phi}}$ satisfies the condition stated in Claim \ref{claimconj}, it suffices to prove the following:
\begin{itemize}

\item[(C1)] $\overline{\PP\nabla|_{\PP X_n^d}}$ is a closed immersion, i.e., an isomorphism onto a closed subset of $Z_n^{d-1}\gitq\SL_n$;

\item[(C2)] ${\overline{\PP\Psi}}$ is an isomorphism onto a closed subset of an affine open subset of $\PP\CC[e_1,\dots,e_n]_{n(d-2)}^{\ss}\gitq\SL_n$.

\end{itemize}

Neither of conditions (C1), (C2) has been established in full generality, so we state:

\begin{openprob}\label{prob1}
\it Prove that conditions {\rm(C1)} and {\rm (C2)} are satisfied for all $n\ge 2$, $d\ge 3$.
\end{openprob}

\noindent Below, we will list known results leading towards settling these conditions.

\subsection{Results concerning conditions (C1) and (C2)} We start with condition (C1). First, we note that the locus $\PP W_n^d$, where the morphism $\PP\nabla$ is defined, contains the semistable locus $\PP\CC[z_1,\dots,z_n]_d^{\ss}$ (see \cite[p.~452]{F}). Next, it is shown in \cite[Theorem 1.7]{F} that the morphism $\PP\nabla$ preserves semistability, i.e., that the element\linebreak $\PP\nabla(f)\in\Gr(n,\CC[z_1,\dots,z_n]_{d-1})$ is semistable whenever $f\in\PP W_n^d$ is semistable. Denoting the restriction of $\PP\nabla$ to $\PP\CC[z_1,\dots,z_n]_d^{\ss}$ by the same symbol, we thus have a morphism of good GIT quotients
$$
\overline{\PP\nabla}\co\PP\CC[z_1,\dots,z_n]_d^{\ss} \gitq\SL_n \to \Gr(n,\CC[z_1,\dots,z_n]_{d-1})^{\ss}\gitq\SL_n
$$
and a commutative diagram
$$
\xymatrix{
\PP\CC[z_1,\dots,z_n]_d^{\ss} \ar[r]^{\hspace{-0.4cm}{\PP\nabla}} \ar[d]_{\pi_5} 		& \Gr(n,\CC[z_1,\dots,z_n]_{d-1})^{\ss} \ar[d]^{\pi_4} \\
\PP\CC[z_1,\dots,z_n]_d^{\ss} \gitq\SL_n	\ar[r]^{\hspace{-0.8cm}{\overline{\PP\nabla}}}			&\Gr(n,\CC[z_1,\dots,z_n]_{d-1})^{\ss}\gitq\SL_n.
}
$$

As each of the subsets $\PP X_n^d$ and $Z_n^{d-1}$ is defined as the loci where an $\SL_n$-invariant section of a power of the hyperplane bundle does not vanish, these subsets are saturated. Hence we can assume that the projection $\pi_3$ is the restriction of $\pi_5$ to $\PP X_n^d$, the projection $\pi_2$ is the restriction of $\pi_4$ to $Z_n^{d-1}$,  and $\overline{\PP\nabla|_{\PP X_n^d}}$ is the restriction of $\overline{\PP\nabla}$ to $\PP X_n^d\gitq\SL_n\subset \PP\CC[z_1,\dots,z_n]_d^{\ss} \gitq\SL_n $.

We now state: 

\begin{theorem}\label{nablefedorchuk}\cite[Proposition 2.1, part (3)]{F}
The morphism $\overline{\PP\nabla|_{\PP X_n^d}}$ is finite and injective. 
\end{theorem}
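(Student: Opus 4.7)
My plan is to handle the two claims in succession: injectivity via the Mather-Yau theorem, and finiteness by realizing $\overline{\PP\nabla|_{\PP X_n^d}}$ as the restriction of a proper morphism of projective GIT quotients to the preimage of an affine open subset.

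For injectivity on closed points, I would use that every element of $\PP X_n^d$ is stable for the $\SL_n$-action, so the good quotient $\PP X_n^d\gitq\SL_n$ is geometric and its closed points correspond bijectively to $\SL_n$-orbits. If $[f]$ and $[g]$ in $\PP X_n^d\gitq\SL_n$ have the same image, then by $\SL_n$-equivariance of $\PP\nabla$ and after replacing $f$ by $Cf$ for a suitable $C\in\SL_n$, the subspaces $\langle f_{z_1},\ldots,f_{z_n}\rangle$ and $\langle g_{z_1},\ldots,g_{z_n}\rangle$ coincide as points of the Grassmannian. Thus the ideals $(f_{z_1},\ldots,f_{z_n})$ and $(g_{z_1},\ldots,g_{z_n})$ in $\CC[z_1,\ldots,z_n]$ literally agree, forcing $M_f=M_g$. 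The Mather-Yau theorem, together with the classical observation that a biholomorphism between homogeneous hypersurface germs lifts to a linear equivalence of the defining forms, then yields $[f]=[g]$.

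For finiteness, I would exploit that $\overline{\PP\nabla}\co\PP\CC[z_1,\ldots,z_n]_d^{\ss}\gitq\SL_n\to\Gr(n,\CC[z_1,\ldots,z_n]_{d-1})^{\ss}\gitq\SL_n$ is a morphism of projective varieties and is therefore proper. The classical equivalence that $\Delta(f)\ne 0$ if and only if $\Res(f_{z_1},\ldots,f_{z_n})\ne 0$ (both expressing the absence of common zeros of the partials away from the origin) gives $\PP\nabla^{-1}(Z_n^{d-1})=\PP X_n^d$. Since $\PP X_n^d$ and $Z_n^{d-1}$ are saturated in their respective semistable loci, combining this with the commutativity $\pi_4\circ\PP\nabla=\overline{\PP\nabla}\circ\pi_5$ yields the preimage identity $\overline{\PP\nabla}^{-1}(Z_n^{d-1}\gitq\SL_n)=\PP X_n^d\gitq\SL_n$. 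Because properness is stable under base change, the restriction $\overline{\PP\nabla|_{\PP X_n^d}}$ is proper; together with the quasi-finiteness supplied by the injectivity established above, this forces it to be finite.

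The main obstacle I anticipate is the careful verification of the preimage identity $\overline{\PP\nabla}^{-1}(Z_n^{d-1}\gitq\SL_n)=\PP X_n^d\gitq\SL_n$, which requires a short but delicate diagram chase using the saturation of both open subsets in their ambient semistable loci. Injectivity is essentially immediate from the Mather-Yau theorem, and once the preimage identity is in hand, finiteness follows formally from the combination \emph{proper} plus \emph{quasi-finite}.
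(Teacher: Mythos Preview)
The paper does not supply its own proof of this theorem; it merely cites \cite[Proposition 2.1, part (3)]{F}. So the comparison must be with the correctness of your argument on its own merits.

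Your finiteness argument is essentially correct and can in fact be streamlined. The verification that $\overline{\PP\nabla}$ is proper (as a morphism of projective varieties) and that $\overline{\PP\nabla}^{-1}(Z_n^{d-1}\gitq\SL_n)=\PP X_n^d\gitq\SL_n$ via saturation is sound. However, you do not need injectivity to conclude finiteness: both $\PP X_n^d\gitq\SL_n$ and $Z_n^{d-1}\gitq\SL_n$ are affine, so $\overline{\PP\nabla|_{\PP X_n^d}}$ is an affine morphism, and an affine proper morphism is finite. This avoids the circularity below.

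The injectivity argument has a genuine gap. You assert that if $[f],[g]\in\PP X_n^d\gitq\SL_n$ have the same image in $Z_n^{d-1}\gitq\SL_n$, then after replacing $f$ by $Cf$ the gradient subspaces $\langle f_{z_1},\dots,f_{z_n}\rangle$ and $\langle g_{z_1},\dots,g_{z_n}\rangle$ literally coincide. But equality in the good quotient $Z_n^{d-1}\gitq\SL_n$ only says that the orbit \emph{closures} of $\PP\nabla(f)$ and $\PP\nabla(g)$ in $Z_n^{d-1}$ meet (property (P5)); it does not say the orbits coincide unless you know that $\SL_n\cdot\PP\nabla(f)$ is closed in $Z_n^{d-1}$, i.e., that gradient points of nondegenerate forms are polystable. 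You have not established this, and it is not automatic: while $f$ is stable in $\PP\CC[z_1,\dots,z_n]_d$, one-parameter degenerations $\lambda(t)\cdot f$ can leave $\PP W_n^d$ (e.g., $z_1^4+z_2^4$ under $\mathrm{diag}(t,t^{-1})$ limits to $z_1^4$), so continuity of $\PP\nabla$ cannot be invoked at the limit to pull the closed orbit back into $\PP\nabla(\SL_n\cdot f)$. Without closedness of the orbit, you never get $M_f=M_g$, and the Mather--Yau step does not fire. Establishing this polystability (or an alternative route to injectivity) is precisely the nontrivial input that Fedorchuk's argument in \cite{F} provides.
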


By Theorem \ref{nablefedorchuk} and Zariski's Main Theorem (see \cite[Corollary 17.4.8]{TY}), condition (C1) will follow if we establish the normality of the (closed) image of $\overline{\PP\nabla|_{\PP X_n^d}}$ in $Z_n^{d-1}\gitq\SL_n$. Thus, condition (C1) is a consequence of a positive answer to:

\begin{openprob}\label{probnormality}
\it Show that the image of $\overline{\PP\nabla|_{\PP X_n^d}}$ is normal for all $n\ge 2$, $d\ge 3$.
\end{openprob}

While the above problem remains open in full generality, for the case $n=2$ we have the following result, which even gives the normality of $\im(\overline{\PP\nabla})$: 

\begin{theorem}\label{isaevalper}\cite[Corollaries 5.5 and 6.6]{AI2}
Assume that $n=2$. Then the morphism $\overline{\PP\nabla}$ is finite and injective, and its image in $\Gr(2,\CC[z_1,z_2]_{d-1})^{\ss}\gitq\SL_2$ is normal.
\end{theorem}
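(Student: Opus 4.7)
The plan is to deduce all three assertions---finiteness, injectivity, and normality of the image---from the single stronger claim that $\overline{\PP\nabla}$ is an isomorphism onto a closed subvariety of the target GIT quotient. This yields finiteness and injectivity immediately, while normality of the image follows because the source $\PP\CC[z_1,z_2]_d^{\ss}\gitq\SL_2$ is normal by property (P8) applied to the smooth ambient variety $\PP\CC[z_1,z_2]_d$.

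First, I would analyze the fibers of $\PP\nabla$ pointwise. Suppose $[f], [g] \in \PP\CC[z_1,z_2]_d^{\ss}$ satisfy $\langle f_{z_1}, f_{z_2}\rangle = \langle g_{z_1}, g_{z_2}\rangle =: V$, and write $(g_{z_1}, g_{z_2})^T = A (f_{z_1}, f_{z_2})^T$ for some $A = (a_{ij}) \in \GL_2$. The integrability of $\nabla g$ forces the linear constraint $(a_{11}-a_{22}) f_{z_1 z_2} + a_{12} f_{z_2 z_2} - a_{21} f_{z_1 z_1} = 0$ in $\CC[z_1,z_2]_{d-2}$. When the three second partials $f_{z_1 z_1}, f_{z_1 z_2}, f_{z_2 z_2}$ are linearly independent, the only solutions are scalar matrices; Euler's identity $d\,f = z_1 f_{z_1} + z_2 f_{z_2}$ then gives $g = \lambda f$, so $\PP\nabla$ is itself injective there. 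At strictly semistable points where the second partials degenerate (for example $f = z_1^d + z_2^d$, where $f_{z_1 z_2} = 0$), the kernel of the constraint is larger, but the corresponding family of $[g]$ (here $\{[a z_1^d + b z_2^d]\}$) coincides with a single $\SL_2$-orbit via the diagonal torus. In general, I would match the kernel of the integrability map with the Lie algebra of the stabilizer of $[f]$ in $\PP\GL_2$, so that the entire fiber of $\PP\nabla$ over $V$ lies in one $\GL_2$-orbit and hence maps to a single point of the GIT quotient.

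Using this fiber analysis, I would construct an algebraic inverse $\chi$ to $\PP\nabla$ on its image. Given $V$ in the image, solve the integrability linear system on $V^{\oplus 2}$ to extract the distinguished scalar class of pairs $(h_1, h_2)$ and set $f := \frac{1}{d}(z_1 h_1 + z_2 h_2)$. The construction is $\SL_2$-equivariant by construction, hence by the universal property (P9) it descends to a morphism $\overline{\chi}$ defined on the image inside the GIT quotient, satisfying $\overline{\chi}\circ\overline{\PP\nabla} = \mathrm{id}$. Combined with the injectivity established above, this shows $\overline{\PP\nabla}$ is an isomorphism onto its image, which is automatically closed by properness of $\overline{\PP\nabla}$ as a morphism of projective varieties.

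The main obstacle is the uniform treatment of strictly semistable points, where the integrability linear system has a larger kernel than the scalar matrices. One must verify globally that this enlarged kernel corresponds exactly to the tangent space of the $\PP\GL_2$-orbit of $[f]$, so that different matrices $A$ in the kernel yield $\SL_2$-equivalent forms $g$, and so that $\chi$ extends as an honest morphism on the full image rather than only on a dense open subset. A stratum-by-stratum analysis of the polystable loci, or a conceptual reformulation via the classical covariant theory of binary forms, appears to be necessary to handle these boundary cases cleanly.
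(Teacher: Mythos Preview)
The paper does not supply its own proof of this statement; it is quoted verbatim as a result from \cite[Corollaries 5.5 and 6.6]{AI2}. So there is no in-paper argument to compare against, and your proposal has to stand on its own.

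Your overall reduction is sound and in fact equivalent to the theorem: finite, injective, and normal image together with Zariski's Main Theorem give a closed immersion, and conversely a closed immersion from the normal variety $\PP\CC[z_1,z_2]_d^{\ss}\gitq\SL_2$ yields all three conclusions. The integrability analysis at points where $f_{z_1z_1},f_{z_1z_2},f_{z_2z_2}$ are linearly independent is correct and gives set-theoretic injectivity of $\PP\nabla$ on that locus.

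However, the obstacle you flag in your final paragraph is a genuine gap, not a detail to be filled in later. Your proposed inverse $\chi$ is defined by solving the integrability linear system on $V^{\oplus 2}$ and extracting ``the distinguished scalar class,'' but at strictly semistable $V$ that solution space has dimension strictly greater than one, so no such class exists. Matching the enlarged kernel with the stabilizer Lie algebra may show that the set-theoretic fiber of $\PP\nabla$ lies in a single orbit, but it does not produce a regular map $\chi$ on the image: a family of linear systems whose kernel jumps in rank does not admit a regular section, and a stratum-by-stratum definition does not glue to a morphism of varieties. Without $\chi$ being an honest morphism you cannot invoke property (P9) to descend it, and the argument for ``isomorphism onto the image'' collapses.

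There is a second, subtler gap. Injectivity of $\overline{\PP\nabla}$ is a statement about S-equivalence classes (fibers of $\pi_4$ and $\pi_5$), not about orbits. You must show that if the orbit closures of $\PP\nabla([f])$ and $\PP\nabla([g])$ meet in $\Gr(2,\CC[z_1,z_2]_{d-1})^{\ss}$, then the orbit closures of $[f]$ and $[g]$ already meet in $\PP\CC[z_1,z_2]_d^{\ss}$. Your fiber analysis only handles the case $\PP\nabla([f])=\PP\nabla([g])$. Controlling orbit-closure degenerations under $\PP\nabla$ requires tracking how (poly)stability is preserved in both directions, which is where the real work in \cite{AI2} lies.
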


Another positive result on condition (C1) concerns the case of ternary cubics (here $n=d=3$):  

\begin{proposition}\label{ternarycubicsnabla}
The image $\im(\overline{\PP\nabla|_{\PP X_3^3}})$ is a nonsingular curve in $Z_3^2\gitq\SL_3$.
\end{proposition}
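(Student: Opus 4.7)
My plan is to reduce the claim to showing that the image is normal (which for a curve coincides with nonsingularity), and then to verify this via an explicit construction of an inverse.

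First, I would identify the source. The classical invariant theory of ternary cubics (Aronhold) tells us that the $\SL_3$-invariant ring of $\CC[z_1,z_2,z_3]_3$ is polynomially generated by invariants $S,T$ of degrees $4,6$, and up to a normalizing constant the discriminant satisfies $\Delta = T^2 - 64\,S^3$. The degree-zero $\SL_3$-invariants on $X_3^3$ are therefore generated by $j:=S^3/\Delta$, so $\PP X_3^3\gitq\SL_3 \cong \Spec\CC[j] \cong \AA^1$ is a nonsingular irreducible affine curve. By Theorem \ref{nablefedorchuk}, the morphism $\overline{\PP\nabla|_{\PP X_3^3}} \co \AA^1 \to Z_3^2\gitq\SL_3$ is finite and injective. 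Its image $C$ is then an irreducible closed curve, and the induced morphism $\AA^1 \to C$ is finite and birational; in other words, it agrees with the normalization map of $C$.

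Next, since normality and nonsingularity coincide for curves, to show $C$ nonsingular it suffices to show that this normalization map is an isomorphism. The plan is to exhibit a regular inverse on $C$. The natural candidate is constructed from Euler's identity $3f = z_1 f_{z_1} + z_2 f_{z_2} + z_3 f_{z_3}$, which recovers $f$ from its ordered gradient: a choice of ordered basis of a net of conics $V\in Z_3^2$ produces, provided $V$ lies in $\im(\PP\nabla|_{\PP X_3^3})$, a preferred ternary cubic modulo the $\GL_3$-action on the basis. Descent to $\SL_3$-quotients then yields a morphism $C \to \PP X_3^3\gitq\SL_3$ inverse to $\overline{\PP\nabla|_{\PP X_3^3}}$. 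Concretely, this is realized by exhibiting an $\SL_3$-invariant regular function on $(\CC[z_1,z_2,z_3]_2)^{\oplus 3}$, constructed by applying an Aronhold-type contraction to the Euler-reconstructed cubic, that is well-defined on $Z_3^2$ and whose pullback under $\PP\nabla$ is proportional to $j$.

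The hard part is the regularity statement: verifying that the proposed inverse extends across all of $C$ without introducing a pinch or a cusp. A finite birational morphism from a nonsingular curve can otherwise collapse it onto a singular target, and ruling this out is more subtle than finite injectivity alone. I would check this concretely on the Hesse pencil $f_\lambda = z_1^3+z_2^3+z_3^3+\lambda\, z_1z_2z_3$, which parameterizes every point of $\PP X_3^3\gitq\SL_3$ (modulo the finite symmetry group permuting the $\lambda$-values producing the same $\SL_3$-orbit), by explicitly computing the family of nets $\langle f_{\lambda,z_1},f_{\lambda,z_2},f_{\lambda,z_3}\rangle$ and producing a regular function on $Z_3^2\gitq\SL_3$ whose restriction to $C$ separates neighboring $\lambda$'s and realizes the coordinate on $\AA^1$, thereby forcing $C$ to be nonsingular.
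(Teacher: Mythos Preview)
Your overall strategy---reduce to normality of the image curve and then exhibit an explicit regular inverse---is sound in principle, but the proposed construction of the inverse has a genuine gap. The Euler reconstruction $(f_1,f_2,f_3)\mapsto z_1f_1+z_2f_2+z_3f_3$ is a map from the \emph{ordered} triple, not from the net: under the second $\GL_3$-action (change of basis of the net) the resulting cubic changes nontrivially, so applying Aronhold invariants to it does not produce a function on $Z_3^2$, let alone on $Z_3^2\gitq\SL_3$. Concretely, $\PP\nabla$ is not even injective on $\PP X_3^3$ (e.g.\ $z_1^3+z_2^3+z_3^3$ and $az_1^3+bz_2^3+cz_3^3$ have the same gradient net $\langle z_1^2,z_2^2,z_3^2\rangle$), so there is no ``preferred cubic'' attached to a point of the image in $Z_3^2$. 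Your fallback---``produce a regular function on $Z_3^2\gitq\SL_3$ whose restriction to $C$ realizes the coordinate on $\AA^1$''---is exactly what is needed, but you have not produced one, and checking the Hesse pencil only computes the morphism $\AA^1\to C$, not the local rings of $C$.

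The paper's argument avoids this entirely by going \emph{forward} rather than backward: it uses the factorization $\overline{\PP\Phi}=\overline{\PP\Psi}\circ\overline{\PP\nabla|_{\PP X_3^3}}$ together with Theorem~\ref{psilocclosedimmer}, which says $\overline{\PP\Psi}$ is a locally closed immersion (an isomorphism onto its image). The explicit computation for ternary cubics in Section~\ref{S:cubics} shows $\im(\overline{\PP\Phi})=\PP^1\setminus\{0\}$ is smooth; since $\overline{\PP\Psi}$ is an isomorphism onto its image, $\im(\overline{\PP\nabla|_{\PP X_3^3}})=(\overline{\PP\Psi})^{-1}(\PP^1\setminus\{0\})$ is smooth as well. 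In effect, the associated form construction \emph{is} the missing regular function on $Z_3^2\gitq\SL_3$ you were looking for: the absolute invariant $\mathrm{K}=A_6^2/(64A_4^3)+1$, pulled back along $\PP\Psi$, is regular on $Z_3^2$ (since $A_4$ is nonvanishing on $U_3^3$ by Theorem~\ref{ternarycubics}) and satisfies $\mathrm{K}\circ\PP\Psi\circ\PP\nabla=\mathrm{J}$.
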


\noindent Proposition \ref{ternarycubicsnabla} has never appeared in print as stated but easily follows from other published facts. Details will be given in Section \ref{S:binaryquarticternarycubics} (see Remark \ref{Rem:binaryquarticternarycubics}).

Next, we will discuss condition (C2). First of all, the following holds:

\begin{theorem}\label{psilocclosedimmer}\cite[Corollary 5.4]{FI}
The morphism $\overline{\PP\Psi}$ is a locally closed immersion.
\end{theorem}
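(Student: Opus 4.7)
The plan is to descend the isomorphism of Proposition \ref{P:imagehat} to GIT quotients by $\SL_n$. That proposition provides an $\SL_n$-equivariant isomorphism $\PP\Psi\co Z_n^{d-1}\xrightarrow{\sim}\PP U_n^{n(d-2)}$ onto the locally closed subvariety $\PP U_n^{n(d-2)}\subset\PP\CC[e_1,\dots,e_n]_{n(d-2)}^{\ss}$, so the problem reduces to checking that the $\SL_n$-equivariant inclusion $\PP U_n^{n(d-2)}\hookrightarrow\PP\CC[e_1,\dots,e_n]_{n(d-2)}^{\ss}$ induces a locally closed immersion of quotients.

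First I would verify that $\overline{\PP\Psi}$ is injective on closed points. By property (P5), points of $Z_n^{d-1}\gitq\SL_n$ correspond bijectively to closed $\SL_n$-orbits in $Z_n^{d-1}$, and via the isomorphism $\PP\Psi$ these correspond bijectively to closed $\SL_n$-orbits in $\PP U_n^{n(d-2)}$. The crucial claim is that each such orbit is also closed inside the larger semistable locus $\PP\CC[e_1,\dots,e_n]_{n(d-2)}^{\ss}$; granted this, distinct closed orbits inside $\PP U_n^{n(d-2)}$ descend to distinct points of $\PP\CC[e_1,\dots,e_n]_{n(d-2)}^{\ss}\gitq\SL_n$ by (P5) applied to the ambient space.

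Second, I would identify the image and promote bijectivity to an isomorphism. The image of $\overline{\PP\Psi}$ is $\pi_1(\PP U_n^{n(d-2)})$. Applying property (P7) to the $\SL_n$-invariant closed set $\overline{\PP U_n^{n(d-2)}}\subset\PP\CC[e_1,\dots,e_n]_{n(d-2)}^{\ss}$ realizes $\pi_1(\overline{\PP U_n^{n(d-2)}})$ as a closed subvariety of the quotient; the previous step then shows that $\pi_1(\PP U_n^{n(d-2)})$ is open in this closed subvariety (equivalently, that $\PP U_n^{n(d-2)}$ is saturated inside $\overline{\PP U_n^{n(d-2)}}$ with respect to $\pi_1$), hence locally closed. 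To pass from bijectivity to an isomorphism, Zariski's Main Theorem suffices, as in the proof of Theorem \ref{nablefedorchuk}: the map $\overline{\PP\Psi}$ is finite onto its image, and normality of $Z_n^{d-1}\gitq\SL_n$ (inherited from the Grassmannian via property (P8)) combined with birationality onto the image yields the isomorphism.

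The hard part will be the claim in the first step: that a closed $\SL_n$-orbit inside the locally closed subset $\PP U_n^{n(d-2)}$ remains closed in the ambient semistable locus. A priori a one-parameter $\SL_n$-degeneration of a form $F\in\PP U_n^{n(d-2)}$ could produce a semistable limit $F'$ whose annihilator $F'^{\perp}\cap\CC[z_1,\dots,z_n]_{d-1}$ violates either the dimension-$n$ condition or the nonvanishing-resultant condition defining $\PP U_n^{n(d-2)}$. Ruling this out, which is presumably the core technical content of \cite{FI}, requires a careful analysis of how the Grassmannian-valued invariant $F\mapsto F^{\perp}\cap\CC[z_1,\dots,z_n]_{d-1}$ deforms under $\SL_n$-degenerations of $F$, very likely using the Hilbert-Mumford numerical criterion together with the explicit description of $\PP U_n^{n(d-2)}$ from \cite[Theorem 3.5]{I6}.
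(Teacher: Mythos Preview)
Your overall strategy matches the paper's: reduce to Proposition~\ref{P:imagehat}, identify the hard input as the preservation of polystability under $\PP\Psi$ (this is exactly \cite[Theorem~1.2]{FI}, phrased in the paper as ``$\PP\Psi$ maps polystable points to polystable points''), use that input to obtain both injectivity of $\overline{\PP\Psi}$ and saturation of $\PP U_n^{n(d-2)}$ inside its closure in the semistable locus, and finish via Zariski's Main Theorem. Your diagnosis of what the hard step is, and why it is hard, is accurate.

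There is one genuine slip in the final step. You invoke Zariski's Main Theorem using normality of the \emph{source} $Z_n^{d-1}\gitq\SL_n$, but the form of ZMT needed here requires normality of the \emph{target}: a finite birational morphism from a normal variety need not be an isomorphism onto its image (the normalization $\AA^1\to\{y^2=x^3\}$, $t\mapsto(t^2,t^3)$, is finite, birational, with smooth source, yet not an isomorphism). Note that the discussion following Theorem~\ref{nablefedorchuk}, which you cite as the model, in fact demands normality of the image, not of the source. The paper supplies exactly the observation you omitted: once saturation is established, property~(P6) makes $\pi_1\bigl(\PP U_n^{n(d-2)}\bigr)$ a good quotient of $\PP U_n^{n(d-2)}$; since $\PP U_n^{n(d-2)}\cong Z_n^{d-1}$ is smooth and hence normal, property~(P8) then gives normality of the \emph{image} $\pi_1\bigl(\PP U_n^{n(d-2)}\bigr)$, and ZMT applies in the correct direction. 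Alternatively, once $\pi_1\bigl(\PP U_n^{n(d-2)}\bigr)$ is known to be a good quotient of $\PP U_n^{n(d-2)}\cong Z_n^{d-1}$, the universal property~(P9) applied in both directions already furnishes an inverse to $\overline{\PP\Psi}$, bypassing ZMT entirely.
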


\begin{proof} The proof is primarily based on \cite[Theorem 1.2]{FI}, the main result of \cite{FI}, which states that $\PP\Psi$ maps polystable points to polystable points. Once this difficult fact has been established, we proceed as follows.

Recall that by Proposition \ref{P:imagehat} the morphism $\PP\Psi$ is a locally closed immersion, specifically, is an isomorphism onto the $\SL_n$-invariant locally closed subset $\PP U_n^{n(d-2)}$ of the semistable locus $\PP\CC[e_1,\dots,e_n]_{n(d-2)}^{\ss}$. Therefore, property (P5) implies that $\overline{\PP\Psi}$ is injective. 

Next, consider the closure $Z$ of $\PP U_n^{n(d-2)}$ in $\PP\CC[e_1,\dots,e_n]_{n(d-2)}^{\ss}$. Clearly, $Z$ is\linebreak $\SL_n$-invariant. Then by property (P7) we see that $\pi_1(Z)$ is closed in the projective variety $\PP\CC[e_1,\dots,e_n]_{n(d-2)}^{\ss}\gitq\SL_n$ and is a good quotient of $Z$. Since $\PP U_n^{n(d-2)}$ is locally closed in $\PP\CC[e_1,\dots,e_n]_{n(d-2)}^{\ss}$, it is open in $Z$. Let us show that $\PP U_n^{n(d-2)}$ is saturated in $Z$ as well. Fix $a\in\PP U_n^{n(d-2)}$ and let $O$ be the unique closed\linebreak $\SL_n$-orbit in $Z$ that lies in the closure of $\SL_n\cdot\, a$ in $Z$ (see  property (P5)). Set $\tilde a:=(\PP\Psi)^{-1}(a)\in Z_n^{d-1}$ and consider the closed $\SL_n$-orbit $\tilde O$ that lies in the closure of $\SL_n\cdot\,\tilde a$ in $Z_n^{d-1}$. Appealing to \cite[Theorem 1.2]{FI} once again, we see that $\PP\Psi(\tilde O)$ is a closed $\SL_n$-orbit in $Z$ and that $\PP\Psi(\tilde O)$ lies in the closure of $\SL_n\cdot\,a$ in $Z$. It then follows that $O=\PP\Psi(\tilde O)$, so $O$ is contained in $\PP U_n^{n(d-2)}$. Since $\PP U_n^{n(d-2)}$ is open in $Z$, the $\SL_n$-orbit of every point of $Z$ that contains $O$ in its closure in fact lies in $\PP U_n^{n(d-2)}$. This shows that $\PP U_n^{n(d-2)}$ is saturated in $Z$ as claimed.

By property (P6) we then have that $\pi_1\left(\PP U_n^{n(d-2)}\right)$ is open in $\pi_1(Z)$ and is a good quotient of $\PP U_n^{n(d-2)}$. Now, recall that $\PP U_n^{n(d-2)}$ is isomorphic to the smooth variety $Z_n^{d-1}$, hence is normal. By property (P8) we therefore see that $\pi_1\left(\PP U_n^{n(d-2)}\right)$ is a normal variety. Zariski's Main Theorem now implies that $\overline{\PP\Psi}$ is an isomorphism onto $\im(\overline{\PP\Psi})=\pi_1\left(\PP U_n^{n(d-2)}\right)$, hence is a locally closed immersion.\end{proof}

\noindent Despite the fact that $\PP\Phi$ is not injective, factorization (\ref{decomposi1}) and Theorems \ref{nablefedorchuk}, \ref{psilocclosedimmer} imply

\begin{corollary}\label{barphiinj}
The morphism $\overline{\PP\Phi}$ is injective.
\end{corollary}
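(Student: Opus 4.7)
The plan is to simply chain together factorization~(\ref{decomposi1}) with the two injectivity statements already available. From (\ref{decomposi1}) we have
$$
\overline{\PP\Phi}=\overline{\PP\Psi}\circ\overline{\PP\nabla|_{\PP X_n^d}},
$$
so it suffices to check that each factor is injective on points of $\PP X_n^d\gitq\SL_n$.

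For the right-hand factor, Theorem~\ref{nablefedorchuk} tells us that $\overline{\PP\nabla|_{\PP X_n^d}}\co \PP X_n^d\gitq\SL_n\to Z_n^{d-1}\gitq\SL_n$ is finite and (most importantly for us) injective. For the left-hand factor, Theorem~\ref{psilocclosedimmer} shows that $\overline{\PP\Psi}\co Z_n^{d-1}\gitq\SL_n\to \PP\CC[e_1,\dots,e_n]_{n(d-2)}^{\ss}\gitq\SL_n$ is a locally closed immersion, and in particular injective as a map of sets. Therefore the composition is injective.

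The only point that requires a sentence of comment is that the domains and codomains match up properly for the composition: $\overline{\PP\nabla|_{\PP X_n^d}}$ lands in $Z_n^{d-1}\gitq\SL_n$, which is precisely the source of $\overline{\PP\Psi}$, so the composition in (\ref{decomposi1}) makes honest sense at the level of the GIT quotients. There is no real obstacle here; the substance of the corollary is entirely packed into Theorems~\ref{nablefedorchuk} and~\ref{psilocclosedimmer}, and the parenthetical remark in the statement (\textquotedblleft despite the fact that $\PP\Phi$ is not injective\textquotedblright) is precisely what makes the descent to the quotient interesting---two forms $f_1,f_2\in X_n^d$ with $\Phi(f_1)=\Phi(f_2)$ (as in Example~\ref{E:example1}) must lie in the same $\SL_n$-orbit modulo scaling, since their gradients span the same point of $Z_n^{d-1}$ and $\overline{\PP\Psi}$ separates closed orbits in the semistable locus.
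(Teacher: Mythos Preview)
Your proof is correct and follows exactly the paper's approach: the paper derives the corollary directly from factorization~(\ref{decomposi1}) together with Theorems~\ref{nablefedorchuk} and~\ref{psilocclosedimmer}, which is precisely what you do. Your final paragraph of commentary is extraneous to the proof (and the phrase ``$\overline{\PP\Psi}$ separates closed orbits'' is slightly misplaced there---what you actually use is that $\PP\Psi$ itself is injective by Proposition~\ref{P:imagehat}), but the argument itself is complete.
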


Note that Theorem \ref{psilocclosedimmer} states that the map $\overline{\PP\Psi}$ is an isomorphism onto the closed subset $\pi_1\left(\PP U_n^{n(d-2)}\right)$ of an open subset of $\PP\CC[e_1,\dots,e_n]_{d(n-2)}^{\ss}\gitq\SL_n$ but does not assert that the open subset may be chosen to be affine as required by condition (C2). We will now make the following observation:

\begin{proposition}\label{invargeneral}
Suppose that there exists a homogeneous $\SL_n$-invariant $J$ on the space $\CC[e_1,\dots,e_n]_{n(d-2)}$ such that $U_n^{n(d-2)}$ is a closed subset of the complement to the zero locus of $J$. Then $\pi_1\left(\PP U_n^{n(d-2)}\right)$ is a closed subset of an affine open subset of $\PP\CC[e_1,\dots,e_n]_{d(n-2)}^{\ss}\gitq\SL_n$, hence condition {\rm (C2)} is satisfied.
\end{proposition}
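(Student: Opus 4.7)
The plan is to view $J$ as carving out an $\SL_n$-saturated affine open chart of the GIT quotient $Q:=\PP\CC[e_1,\dots,e_n]_{n(d-2)}^{\ss}\gitq\SL_n$ that contains $\PP U_n^{n(d-2)}$ as a closed subvariety, and then to push this closed inclusion down through $\pi_1$ by means of property (P7). The conclusion about condition (C2) will then follow by combining with Theorem \ref{psilocclosedimmer}.

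Set $m:=\deg J>0$ and view $J$ as a global $\SL_n$-invariant section of $H^{\otimes m}$, where $H$ is the hyperplane bundle on $\PP\CC[e_1,\dots,e_n]_{n(d-2)}$. Define
$$
D:=\bigl\{[F]\in\PP\CC[e_1,\dots,e_n]_{n(d-2)}:J(F)\ne 0\bigr\}.
$$
By the characterization of semistability from \cite[Proposition 9.5.2.2]{LR} recalled in Subsection \ref{reviewGIT}, $D$ is an $\SL_n$-invariant open subset of $\PP\CC[e_1,\dots,e_n]_{n(d-2)}^{\ss}$. The $\Proj$-description of the GIT quotient given in Case 2 of Subsection \ref{reviewGIT} identifies $Q$ with $\Proj R^{\SL_n}$ and shows that the preimage under $\pi_1$ of the standard affine open chart $D_+(J)\subset Q$ is exactly $D$; in particular $D$ is $\pi_1$-saturated and by property (P6) the image $\pi_1(D)=D_+(J)$ is an affine open subset of $Q$ with $\pi_1|_D\co D\to\pi_1(D)$ a good quotient.

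By hypothesis $U_n^{n(d-2)}$ is closed in $\{J\ne 0\}\subset\CC[e_1,\dots,e_n]_{n(d-2)}$. Since both sets are $\CC^*$-stable and the projectivization is a good $\CC^*$-quotient (Case 1 of Subsection \ref{reviewGIT}), the induced inclusion $\PP U_n^{n(d-2)}\hookrightarrow D$ is closed as well. Because $\PP U_n^{n(d-2)}$ is also $\SL_n$-invariant, property (P7) applied to $\pi_1|_D$ shows that $\pi_1\bigl(\PP U_n^{n(d-2)}\bigr)$ is closed in the affine open subset $D_+(J)\subset Q$, proving the first claim. For the second, recall that by Proposition \ref{P:imagehat} and Theorem \ref{psilocclosedimmer} the morphism $\overline{\PP\Psi}$ is an isomorphism onto $\pi_1\bigl(\PP U_n^{n(d-2)}\bigr)$; combined with what was just shown, this gives condition (C2). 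The only delicate point of the argument is the identification $\pi_1^{-1}(D_+(J))=D$ and the accompanying affineness of $\pi_1(D)$, i.e.\ the saturation property of $D$ together with the structure of the $D_+$-chart; this is however a structural feature of the $\Proj$ construction of GIT quotients and needs only to be invoked carefully from Subsection \ref{reviewGIT}, the remainder being a routine application of properties (P5)--(P7).
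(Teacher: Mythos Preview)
Your proof is correct and follows essentially the same route as the paper: define the affine open locus $D=\{J\ne 0\}$ in $\PP\CC[e_1,\dots,e_n]_{n(d-2)}^{\ss}$, observe it is saturated so that $\pi_1(D)$ is an affine open subset of the GIT quotient by (P6), and then push the closed inclusion $\PP U_n^{n(d-2)}\hookrightarrow D$ down via (P7). The only cosmetic difference is that you justify the affineness and saturation of $\pi_1(D)$ through the explicit $D_+(J)$ chart of the $\Proj$ construction, whereas the paper simply invokes that a good quotient of an affine variety is affine; both are equivalent standard facts.
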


\begin{proof} Let $U$ be the affine open subset of $\PP\CC[e_1,\dots,e_n]_{n(d-2)}$ that consists of all elements of $\PP\CC[e_1,\dots,e_n]_{n(d-2)}$ at which $J$, understood as a global $\SL_n$-invariant section of a power of the hyperplane bundle, does not vanish. Then $\PP U_n^{n(d-2)}$ is a closed subset of $U$. Since $U$ is a saturated open subset of  $\PP\CC[e_1,\dots,e_n]_{n(d-2)}^{\ss}$, by property (P6) it follows that $\pi_1(U)$ is open in $\PP\CC[e_1,\dots,e_n]_{n(d-2)}^{\ss}\gitq\SL_n$ and is a good quotient of $U$. As $U$ is affine, its good quotient is also affine, hence $\pi_1(U)$ is an affine open subset of $\PP\CC[e_1,\dots,e_n]_{n(d-2)}^{\ss}\gitq\SL_n$. Since $\PP U_n^{n(d-2)}$ is a closed $\SL_n$-invariant subset of of $U$, by property (P7) we see that $\pi_1\left(\PP U_n^{n(d-2)}\right)$ is a closed subset of $\pi_1(U)$.\end{proof} 

Following Proposition \ref{invargeneral}, we now state:

\begin{openprob}\label{openprobexistinvari}
\it Show that for all $n\ge 2$, $d\ge 3$ one can find a homogeneous $\SL_n$-invariant on the space $\CC[e_1,\dots,e_n]_{n(d-2)}$ such that $U_n^{n(d-2)}$ is a closed subset of the complement to its zero locus. 
\end{openprob}

\noindent We note that in \cite[Theorem 3.5]{I6} we constructed a hypersurface in an affine variety $A$ containing $U_n^{n(d-2)}$ that does not intersect $U_n^{n(d-2)}$ but it is not clear whether this hypersurface comes from the zero set of an $\SL_n$-invariant.

While Problem \ref{openprobexistinvari} remains open in full generality, it has been solved in the cases $n=2$ and $n=d=3$. To discuss the case $n=2$, let us recall that the {\it catalecticant}\, of a binary form 
$$
f=\sum_{i=0}^{2N}{2N \choose i}a_iw_1^{2N-i}w_2^i
$$
of even degree $2N$ is
\begin{equation}
\Cat(f):=\det\left(\begin{array}{cccc}
a_0 & a_1 & \dots & a_N\\
a_1 & a_2 & \dots & a_{N+1}\\
\vdots & \vdots & \ddots & \vdots\\
a_N & a_{N+1} & \dots & a_{2N}
\end{array}
\right).\label{catalectform}
\end{equation}
It is an $\SL_2$-invariant and does not vanish if and only if the $N+1$ partial derivatives of $f$ of order $N$ are linearly independent in $\CC[w_1,w_2]_N$ (see, e.g., \cite[Lemma 6.2]{K}). Alternatively, the set where the catalecticant is nonzero is the complement to the closure of the locus of forms in $\CC[w_1,w_2]_{2N}$ expressible as the sum of the $2N$th powers of $N$ linear forms (see, e.g., \cite[\S 208]{Ell} or  \cite[\S 187]{GY}). 

Notice that the catalecticant is defined on the target space of the morphism $\Psi \co Y_n^{d-1} \to \CC[e_1,e_2]_{2(d-2)}$. Let us denote the affine open subset of $\CC[e_1,e_2]_{2(d-2)}$ where $\Cat$ does not vanish by $V_2^{2(d-2)}$ and its image in $\PP\CC[e_1,e_2]_{2(d-2)}$ by $\PP V_2^{2(d-2)}$. Clearly, $\PP V_2^{2(d-2)}$ is the affine open subset of $\PP\CC[e_1,e_2]_{2(d-2)}$ that consists of all elements of $\PP\CC[e_1,e_2]_{2(d-2)}$ at which the catalecticant $\Cat$, understood as a global $\SL_2$-invariant section of $H^{\otimes(d-1)}$, does not vanish.

For binary forms the following holds:

\begin{theorem}\label{catnonzero}\cite[Proposition 4.3]{AI2}
One has $U_2^{2(d-2)}=V_2^{2(d-2)}$.
\end{theorem}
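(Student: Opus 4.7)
The plan is to describe both $U_2^{2(d-2)}$ and $V_2^{2(d-2)}$ intrinsically in terms of the apolar algebra $A_F := \CC[z_1,z_2]/F^{\perp}$, which by Proposition \ref{prop-correspondence} is a graded Artinian Gorenstein algebra of socle degree $2(d-2)$. The key structural input, special to the binary setting, is the classical fact that every graded Artinian Gorenstein quotient of $\CC[z_1,z_2]$ is a complete intersection, so we may write $F^{\perp}=(f_1,f_2)$ for a regular sequence of homogeneous forms with degrees $a:=\deg f_1\leq\deg f_2=:b$. The socle-degree formula $a+b-2=2(d-2)$ then gives $a+b=2d-2$.

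Next I translate the catalecticant condition. Using the description recalled right after (\ref{catalectform}), $\Cat(F)\neq 0$ is equivalent to the map $\CC[z_1,z_2]_{d-2}\to\CC[e_1,e_2]_{d-2}$, $g\mapsto g\diamond F$, being an isomorphism, i.e.\ to $F^{\perp}\cap\CC[z_1,z_2]_{d-2}=0$. Since $F^{\perp}=(f_1,f_2)$ has no nonzero elements of degree below $a$, this means $a\geq d-1$; combined with $a\leq b$ and $a+b=2d-2$, it forces $a=b=d-1$. Thus $V_2^{2(d-2)}$ is exactly the locus on which $F^{\perp}=(f_1,f_2)$ with $\deg f_1=\deg f_2=d-1$.

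Now I analyze the condition $F\in U_2^{2(d-2)}$. The explicit Hilbert function of the planar complete intersection $\CC[z_1,z_2]/(f_1,f_2)$ shows that $\dim F^{\perp}\cap\CC[z_1,z_2]_{d-1}=2$ if and only if $(a,b)=(d-1,d-1)$ or $(a,b)=(d-2,d)$. Here is where condition (ii) in the definition of $U_2^{2(d-2)}$ becomes crucial: in the case $(a,b)=(d-2,d)$, the subspace $F^{\perp}\cap\CC[z_1,z_2]_{d-1}$ is spanned by $z_1f_1$ and $z_2f_1$, so every element of it is divisible by $f_1$, and every basis has $f_1$ as a common factor---hence vanishing resultant, ruling this case out. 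In the remaining case $(a,b)=(d-1,d-1)$, the pair $\{f_1,f_2\}$ itself is a regular sequence and its resultant is nonzero. Hence $U_2^{2(d-2)}$ is precisely the locus $a=b=d-1$, matching $V_2^{2(d-2)}$.

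The only real obstacle lies in the case-analysis of the previous paragraph: the bare dimension condition (i) is not by itself equivalent to $\Cat(F)\neq 0$, and it would be easy to conflate the two Hilbert-function possibilities. The crux of the argument is recognizing that condition (ii)---nonvanishing of the resultant of a basis---is exactly what discards the $(a,b)=(d-2,d)$ case and pins the Hilbert function down to the generic one, thereby aligning $U_2^{2(d-2)}$ with $V_2^{2(d-2)}$.
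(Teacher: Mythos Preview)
Your argument is correct. The paper does not supply its own proof of this statement; it simply records it as a theorem with a citation to \cite[Proposition 4.3]{AI2}. Your approach---using the classical fact that graded Artinian Gorenstein quotients of $\CC[z_1,z_2]$ are complete intersections, then reading off the generator degrees from the socle-degree constraint $a+b=2d-2$ and matching the catalecticant condition to $a=b=d-1$---is the natural one and is essentially the argument given in the cited reference. Your identification of the critical role of condition (ii) (the nonvanishing-resultant requirement) in excluding the $(a,b)=(d-2,d)$ case is exactly the point; without it the mere dimension count would be ambiguous, as you note.
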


Next, we let $n=d=3$. Notice that in this case $n(d-2)=d=3$. Let $A_4$ be the {\it degree four Aronhold invariant}\, of ternary cubics. An explicit formula for $A_4$ can be found, for example, in \cite[p.~191]{Sal}. Namely, for a ternary cubic
$$
\begin{array}{l}
f(w_1,w_2,w_3)=aw_1^3+bw_2^3+cw_3^3+3dw_1^2w_2+3pw_1^2w_3+3qw_1w_2^2+\\
\vspace{-0.1cm}\\
\hspace{6cm}3rw_2^2w_3+3sw_1w_3^2+3tw_2w_3^2+6uw_1w_2w_3
\end{array}
$$
one has
\begin{equation}
\begin{array}{l}
A_4(f)=abcu-bcdp-acqr-abst-u(art+bps+cdq)+\\
\vspace{-0.3cm}\\
\hspace{1.5cm}aqt^2+ar^2s+bds^2+bp^2t+cd^2r+cpq^2-u^4+\\
\vspace{-0.3cm}\\
\hspace{1.5cm}2u^2(qs+dt+pr)-3u(drs+pqt)-q^2s^2-d^2t^2-\\
\vspace{-0.3cm}\\
\hspace{1.5cm}p^2r^2+dprt+prqs+dqst.
\end{array}\label{aronhold4}
\end{equation}
Let us denote the affine open subset of $\CC[e_1,e_2,e_3]_3$ where $A_4$ does not vanish by $V_3^3$ and its image in $\PP\CC[e_1,e_2,e_3]_3$ by $\PP V_3^3$. Clearly, $\PP V_3^3$ is the affine open subset of $\PP\CC[e_1,e_2,e_3]_3$ that consists of all elements of $\PP\CC[e_1,e_2,e_3]_3$ at which $A_4$, understood as a global $\SL_3$-invariant section of $H^{\otimes 4}$, does not vanish. 

For $n=d=3$ we have:

\begin{theorem}\label{ternarycubics}\cite[Proposition 4.1]{I5}
One has $U_3^3=V_3^3$.
\end{theorem}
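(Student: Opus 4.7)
My plan is to prove the two inclusions separately, using the identification $U_3^3=\im(\Psi)$ from Proposition \ref{P:image}, where $\Psi\colon Y_3^2\to\CC[e_1,e_2,e_3]_3$.

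For $U_3^3\subseteq V_3^3$, I study the regular function $A_4\circ\Psi$ on $Y_3^2$. Since $A_4$ is a degree-four $\SL_3$-invariant on $\CC[e_1,e_2,e_3]_3$, a direct weight computation using Proposition \ref{L:equiv2} shows that $A_4\circ\Psi$ is a $(\GL_3\times\GL_3)$-semi-invariant of bi-weight $(8,4)$, while the resultant $\Res$ of three ternary quadrics has bi-weight $(-8,-4)$. Consequently $(A_4\circ\Psi)\cdot\Res$ is a $(\GL_3\times\GL_3)$-invariant regular function on $Y_3^2$. Because $\dim Y_3^2=18=\dim(\GL_3\times\GL_3)$ and the generic stabilizer is trivial, the ring of $(\GL_3\times\GL_3)$-invariants on $Y_3^2$ equals $\CC$, yielding the identity $A_4(\Psi(\bbf))\cdot\Res(\bbf)=c$ for some constant $c$. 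To verify $c\ne 0$, I evaluate at the Fermat gradient $\bbf_0=(3z_1^2,3z_2^2,3z_3^2)$: Example \ref{E:example1} gives $\Psi(\bbf_0)=\tfrac{1}{36}e_1e_2e_3$, formula (\ref{aronhold4}) applied to a pure monomial cubic (only $u\ne 0$) gives $A_4(\tfrac{1}{36}e_1e_2e_3)\ne 0$, and $\Res(\bbf_0)\ne 0$ by definition of $Y_3^2$. Thus $A_4$ never vanishes on $\im(\Psi)=U_3^3$.

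For the reverse inclusion, let $F$ satisfy $A_4(F)\ne 0$. The Hilbert function of the Gorenstein algebra $\CC[z_1,z_2,z_3]/F^{\perp}$ must be $(1,3,3,1)$: otherwise $F$ is $\GL_3$-equivalent to a form in at most two variables, and inspection of (\ref{aronhold4}) shows that every monomial in the expression for $A_4$ involves a coefficient that must vanish in such a case, forcing $A_4(F)=0$. Hence $\dim F^{\perp}\cap\CC[z_1,z_2,z_3]_2=3$. Membership $F\in U_3^3$ then reduces to verifying that this three-dimensional space of quadrics is base-point-free---equivalently, by the Koszul resolution for three ternary quadrics forming a regular sequence, that the multiplication map $(F^{\perp}\cap\CC[z_1,z_2,z_3]_2)\otimes\CC[z_1,z_2,z_3]_1\to F^{\perp}\cap\CC[z_1,z_2,z_3]_3$ (both sides of dimension nine by the Hilbert function) is an isomorphism. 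On the Hesse pencil $F_\lambda=e_1^3+e_2^3+e_3^3+6\lambda e_1e_2e_3$, to which every smooth cubic is $\PGL_3$-equivalent, direct computation gives $F_\lambda^{\perp}\cap\CC[z_1,z_2,z_3]_2=\langle z_1z_2-\lambda z_3^2,\,z_1z_3-\lambda z_2^2,\,z_2z_3-\lambda z_1^2\rangle$, and elementary elimination in the common-zero equations shows that these three quadrics have no common projective zero exactly when $\lambda(1-\lambda^3)=A_4(F_\lambda)\ne 0$; by $\SL_3$-equivariance this settles every smooth $F$.

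The main obstacle is extending the reverse inclusion to the singular cubics in $V_3^3$, which fall outside the Hesse orbit. My intended route is to observe that $U_3^3$ is open in $\CC[e_1,e_2,e_3]_3$ (both defining conditions being open), so $V_3^3\setminus U_3^3$ is a closed $\SL_3$-invariant subset of the irreducible open variety $V_3^3$, and then to verify case-by-case on representatives of the finitely many $\SL_3$-orbit types of singular cubics with $A_4\ne 0$ (for instance $e_1e_2e_3$, the nodal cubic $e_1^3+e_2^3-e_1e_2e_3$, and the line-times-conic $e_1(e_1^2+e_2^2+e_3^2)$) that the three-dimensional annihilator $F^{\perp}\cap\CC[z_1,z_2,z_3]_2$ is base-point-free. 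A more conceptual alternative would be to translate the base-point condition into an explicit $\SL_3$-invariant polynomial vanishing on the coefficients of $F$ and to identify this invariant as a nonzero multiple of $A_4$, thereby completing the proof uniformly via a single polynomial identity.
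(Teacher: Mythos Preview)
The paper does not actually prove this statement here; it merely cites \cite[Proposition 4.1]{I5}. So there is no in-text argument to compare against, and I evaluate your proposal on its own merits.

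Your argument for $U_3^3\subseteq V_3^3$ contains a genuine error. The claim that the generic $(\GL_3\times\GL_3)$-stabilizer on $Y_3^2$ is trivial is false: the one-parameter subgroup $\{(\lambda I,\lambda^{-2}I):\lambda\in\CC^*\}$ acts trivially on every $\bbf\in Y_3^2$, since $(\lambda I,\lambda^{-2}I)\cdot\bbf(z)=\bbf(\lambda^{-1}z)\cdot\lambda^{2}=\lambda^{-2}\lambda^{2}\bbf(z)=\bbf(z)$ for quadrics $\bbf$. Thus the generic orbit has dimension at most $17$, the good quotient $Y_3^2\gitq(\GL_3\times\GL_3)\cong Z_3^2\gitq\SL_3$ is one-dimensional (as also follows from Remark~\ref{Rem:binaryquarticternarycubics}), and there are non-constant invariant regular functions. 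Your bi-weight computation is correct and the identity $(A_4\circ\Psi)\cdot\Res=\mathrm{const}\ne 0$ does in fact hold, but establishing constancy requires an additional step---for instance, pulling back along the finite surjection $\overline{\PP\nabla|_{\PP X_3^3}}\co\CC\to Z_3^2\gitq\SL_3$ (Theorem~\ref{nablefedorchuk}) and using the explicit computation $A_4(\mathbf c_t)\ne 0$ carried out in Subsection~\ref{S:cubics}.

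For the reverse inclusion, your treatment of smooth cubics via the Hesse pencil is fine, and you correctly identify the singular semistable cubics as the remaining obstacle. The case-by-case route you sketch does succeed: for $F=e_1e_2e_3$ one finds $F^{\perp}\cap\CC[z_1,z_2,z_3]_2=\langle z_1^2,z_2^2,z_3^2\rangle$, manifestly base-point-free, and the orbits $\mathrm O_2$, $\mathrm O_3$ yield to similar direct calculations. Your ``more conceptual alternative''---identifying the defining invariant of $\CC[e_1,e_2,e_3]_3\setminus U_3^3$ with $A_4$---is essentially a restatement of the theorem and would need an independent argument that the complement is an irreducible degree-$4$ hypersurface. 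As written, then, the reverse inclusion remains a sketch rather than a proof.
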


Now, Claim \ref{claimconj}, Theorems \ref{isaevalper}, \ref{catnonzero}, \ref{ternarycubics} and Proposition \ref{ternarycubicsnabla} imply

\begin{corollary}\label{positivecor}
Conjecture {\rm \ref{conj2}} is valid for $n=2$ and for $n=d=3$.
\end{corollary}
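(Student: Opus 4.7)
The plan is to verify, for each of the two cases, both of the sufficient conditions (C1) and (C2), and then to combine them via the factorization (\ref{decomposi1}) and Claim \ref{claimconj}. In other words, this is an assembly argument: all the substantive work has already been done in Theorems \ref{isaevalper}, \ref{catnonzero}, \ref{ternarycubics}, \ref{psilocclosedimmer}, and \ref{nablefedorchuk} together with Propositions \ref{ternarycubicsnabla} and \ref{invargeneral}; the task is to check that nothing is missing.

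First I would verify (C1). When $n=2$, Theorem \ref{isaevalper} states that $\overline{\PP\nabla}$ is finite and injective and that its image in $\Gr(2,\CC[z_1,z_2]_{d-1})^{\ss}\gitq\SL_2$ is normal. Since $\PP X_2^d\gitq\SL_2$ and $Z_2^{d-1}\gitq\SL_2$ are the saturated open subsets of $\PP\CC[z_1,z_2]_d^{\ss}\gitq\SL_2$ and $\Gr(2,\CC[z_1,z_2]_{d-1})^{\ss}\gitq\SL_2$ on which $\Delta$ and $\Res$ do not vanish, the restriction $\overline{\PP\nabla|_{\PP X_2^d}}$ inherits finiteness, injectivity, and normality of its image; Zariski's Main Theorem then upgrades this to an isomorphism onto a closed subset of $Z_2^{d-1}\gitq\SL_2$. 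When $n=d=3$, Theorem \ref{nablefedorchuk} provides finiteness and injectivity of $\overline{\PP\nabla|_{\PP X_3^3}}$, and Proposition \ref{ternarycubicsnabla} asserts that its image is a nonsingular curve in $Z_3^2\gitq\SL_3$; nonsingularity implies normality, so Zariski's Main Theorem again yields a closed immersion.

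Next I would verify (C2). By Theorem \ref{psilocclosedimmer}, $\overline{\PP\Psi}$ is in each case a locally closed immersion onto $\pi_1(\PP U_n^{n(d-2)})$, so what is left is to exhibit a homogeneous $\SL_n$-invariant on $\CC[e_1,\dots,e_n]_{n(d-2)}$ whose nonvanishing locus contains $U_n^{n(d-2)}$ as a closed subset, and then appeal to Proposition \ref{invargeneral}. For $n=2$ this invariant is the catalecticant: Theorem \ref{catnonzero} gives $U_2^{2(d-2)}=V_2^{2(d-2)}$, so $U_2^{2(d-2)}$ is in fact the entire complement of the zero locus of $\Cat$, which is certainly closed in this complement. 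For $n=d=3$ the invariant is the degree four Aronhold invariant $A_4$: Theorem \ref{ternarycubics} gives $U_3^3=V_3^3$, so again $U_3^3$ is the whole complement of the zero locus of $A_4$. In both cases Proposition \ref{invargeneral} yields that $\pi_1(\PP U_n^{n(d-2)})$ is a closed subset of an affine open subset of $\PP\CC[e_1,\dots,e_n]_{n(d-2)}^{\ss}\gitq\SL_n$, which is (C2).

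Finally, combining (C1) and (C2) through the factorization $\overline{\PP\Phi}=\overline{\PP\Psi}\circ\overline{\PP\nabla|_{\PP X_n^d}}$ of (\ref{decomposi1}): the first factor is an isomorphism onto a closed subset $A\subset Z_n^{d-1}\gitq\SL_n$, while $\overline{\PP\Psi}$ maps $Z_n^{d-1}\gitq\SL_n$ isomorphically onto $\pi_1(\PP U_n^{n(d-2)})$, which sits as a closed subset of an affine open subset $U\subset\PP\CC[e_1,\dots,e_n]_{n(d-2)}^{\ss}\gitq\SL_n$. Hence $\overline{\PP\Phi}$ is an isomorphism onto the closed subset $\overline{\PP\Psi}(A)\subset U$, and Claim \ref{claimconj} delivers Conjecture \ref{conj2} in each case. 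The only place where anything could go wrong is the normality input for (C1) in the ternary cubics case, which relies on Proposition \ref{ternarycubicsnabla} whose proof is deferred; once that is in hand the corollary is immediate from the assembly above.
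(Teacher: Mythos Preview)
Your proposal is correct and follows essentially the same approach as the paper, which simply asserts that Claim \ref{claimconj}, Theorems \ref{isaevalper}, \ref{catnonzero}, \ref{ternarycubics} and Proposition \ref{ternarycubicsnabla} imply the corollary. You have unpacked this assembly carefully and even supplied the auxiliary ingredients (Theorem \ref{nablefedorchuk}, Theorem \ref{psilocclosedimmer}, Proposition \ref{invargeneral}) that the paper's one-line deduction leaves implicit, so your argument is in fact more complete than the paper's own.
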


In fact, in Section \ref{S:binaryquarticternarycubics} we will see that for $n=d=3$ factorizations (\ref{decomposition}), (\ref{decommpossition}), (\ref{decomposi1}) are not required. In this case, Conjecture \ref{conj2} can be obtained by studying the morphism $\Phi$ directly.

To conclude this subsection, we reiterate that in order to establish Conjecture \ref{conj2} in full generality it suffices to solve Open Problem \ref{prob1}, which in turn will follow from positive solutions to Open Problems \ref{probnormality} and \ref{openprobexistinvari}.

\subsection{A weak variant of Conjecture {\rm \ref{conj2}}} The initial version of Conjecture {\rm \ref{conj2}}, stated in \cite{EI}, did not contain the requirement that the $\GL_n$-invariant rational function $R$ be defined at every point of the image of $\Phi$. It was formulated as follows: 

\begin{conjecture}\label{conj3} For every regular $\GL_n$-invariant function $S$ on $X_n^d$ there exists a rational $\GL_n$-invariant function $R$ on $\CC[e_1,\dots,e_n]_{n(d-2)}$ such that $R\circ\Phi$ extends to a regular function on $X_n^d$ that coincides with $S$.
\end{conjecture}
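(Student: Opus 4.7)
The plan is to exploit the GIT machinery of Subsection \ref{reviewGIT} together with the factorisation \eqref{decomposi1}, observing that for the weaker conjecture one does not need the fine control demanded by Claim \ref{claimconj}: it suffices for $\overline{\PP\Phi}$ to be \emph{birational} onto its image, since then pullback identifies function fields and every regular function on $\PP X_n^d\gitq\SL_n$ arises as the pullback of a rational function from the target.

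First I would record two dictionaries. Regular $\GL_n$-invariant functions on $X_n^d$ are exactly the $\SL_n$-invariant, scaling-invariant regular functions, hence correspond bijectively to regular functions on the affine quotient $\PP X_n^d\gitq\SL_n$; analogously, $\GL_n$-invariant rational functions on $\CC[e_1,\dots,e_n]_{n(d-2)}$ correspond to rational functions on the projective quotient $\PP\CC[e_1,\dots,e_n]_{n(d-2)}^{\ss}\gitq\SL_n$ via pullback along $\pi_1$. The next step is to establish that $\overline{\PP\Phi}$ is birational onto its image. Combining the factorisation $\overline{\PP\Phi}=\overline{\PP\Psi}\circ\overline{\PP\nabla|_{\PP X_n^d}}$ with Theorem \ref{nablefedorchuk} (the gradient factor is finite and injective) and Theorem \ref{psilocclosedimmer} ($\overline{\PP\Psi}$ is a locally closed immersion), one obtains that $\overline{\PP\Phi}$ is a bijective morphism onto a locally closed subvariety $A\subset\PP\CC[e_1,\dots,e_n]_{n(d-2)}^{\ss}\gitq\SL_n$, and in fact finite when considered as a map onto $A$. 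Both GIT quotients being irreducible, the standard fact that a finite bijective morphism of irreducible complex varieties has degree one on function fields (which follows from generic smoothness of fibres in characteristic zero) yields an equality $K(A)=K(\PP X_n^d\gitq\SL_n)$, i.e.\ $\overline{\PP\Phi}$ is birational onto $A$.

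Given $S\in\CC[X_n^d]^{\GL_n}$, I would identify it with a regular function $\bar S$ on $\PP X_n^d\gitq\SL_n$, transport it via the birational inverse of $\overline{\PP\Phi}$ to a rational function on $A$, and extend the latter to a rational function $T$ on the ambient quotient $\PP\CC[e_1,\dots,e_n]_{n(d-2)}^{\ss}\gitq\SL_n$. For the extension I would work in an affine chart of the target meeting $A$ in a dense open subset: a rational function on $A$ is locally a ratio of regular functions, each of which lifts to a regular function on the affine chart, and the resulting ratio is a rational function on the chart that glues to a rational function on the entire quotient. Pulling $T$ back along $\pi_1$ gives an $\SL_n$-invariant rational function on $\PP\CC[e_1,\dots,e_n]_{n(d-2)}^{\ss}$, which lifts as a ratio of two semi-invariants of the same weight to a $\GL_n$-invariant rational function $R$ on $\CC[e_1,\dots,e_n]_{n(d-2)}$.

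Finally, the functions $R\circ\Phi$ and $S$ are both $\GL_n$-invariant on $X_n^d$ and by construction descend to the same regular function $\bar S$ on $\PP X_n^d\gitq\SL_n$; therefore they agree on the dense open subset of $X_n^d$ where $R\circ\Phi$ is defined, whence $R\circ\Phi$ extends to $S$ on all of $X_n^d$, as required. The main obstacle I anticipate is the careful justification of ``finite plus bijective implies birational'' for morphisms between possibly non-normal GIT quotients, together with the bookkeeping needed to lift the constructed rational function from the quotient to a genuine $\GL_n$-invariant rational function on the ambient vector space. What makes this weak variant tractable, unlike Conjecture \ref{conj2}, is precisely that one never needs to locate $A$ inside an affine open subset of the target quotient --- which is why the argument goes through even while Open Problems \ref{prob1} and \ref{openprobexistinvari} remain open.
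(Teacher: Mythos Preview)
Your argument is correct. Both your route and the paper's hinge on the injectivity of $\overline{\PP\Phi}$, but they diverge in how they upgrade injectivity to the statement needed to transport $\bar S$. The paper's proof (Theorem \ref{weakerconjsettled}) treats the construction of the explicit form $f_0$ in \eqref{deff0} with nondegenerate associated form as the heart of the matter: this guarantees that $\im(\PP\Phi)$ meets the \emph{stable} locus, after which one restricts to the preimage $U$ of the stable locus, passes to geometric quotients, locates a smooth point in the image, and invokes Zariski's Main Theorem to obtain an honest isomorphism over an affine open. You bypass $f_0$ entirely: from Theorems \ref{nablefedorchuk} and \ref{psilocclosedimmer} you extract that $\overline{\PP\Phi}$ is finite and bijective onto its locally closed image $A$, and then the characteristic-zero fact that finite bijective morphisms of irreducible varieties are birational gives you the identification of function fields directly, with no need to find stable points or smooth loci.

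What each buys: your approach is cleaner once one is willing to invoke the results of \cite{F} and \cite{FI}, and it makes transparent that the weak conjecture needs only birationality rather than the affine closed-immersion condition of Claim \ref{claimconj}. The paper's approach, however, reflects the historical logic: Theorem \ref{weakerconjsettled} is originally \cite[Theorem 4.1]{AI1}, which predates \cite{F} and \cite{FI}, so the explicit $f_0$ was doing genuine work there (and the paper underlines this in the paragraph following the proof, and again in posing Open Problem \ref{probiterations}). By leaning entirely on Theorems \ref{nablefedorchuk} and \ref{psilocclosedimmer} you have written a proof that is logically posterior to those harder results. Your anticipated obstacle---``finite plus bijective implies birational'' for possibly non-normal targets---is not a real difficulty over $\CC$: the generic fibre of a dominant morphism of irreducible varieties has cardinality equal to the separable degree of the function field extension, which in characteristic zero is the degree; bijectivity forces this to be one.
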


Note, for instance, that for the morphism
$$
\varphi:\CC\to\CC^2,\quad z\mapsto (z,z)
$$
the rational function $R:=z_1/z_2$ is not defined at $(0,0)=\varphi(0)$ but the pullback $R\circ\varphi$ extends to the regular function $1$ on $\CC$. Conjecture \ref{conj3} does not rule out such situations, whereas Conjecture \ref{conj2} does. We stress that it is the stronger conjecture that is required for solving the reconstruction problem stated in the introduction. 

The weaker conjecture has turned out to be easier to settle:

\begin{theorem}\label{weakerconjsettled}\cite[Theorem 4.1]{AI1}
Conjecture {\rm\ref{conj3}} holds for all $n\ge 2$, $d\ge 3$.
\end{theorem}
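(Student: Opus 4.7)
The strategy is to show that $\Phi$ separates $\GL_n$-orbits on $X_n^d$ at the level of closed points, conclude that the induced morphism on orbit spaces is birational onto its image, and use this to pull back any regular $\GL_n$-invariant on $X_n^d$ to a rational $\GL_n$-invariant on $\CC[e_1,\dots,e_n]_{n(d-2)}$. Observe that because $X_n^d$ is affine and every point of $X_n^d$ is $\GL_n$-stable, the regular $\GL_n$-invariants on $X_n^d$ are exactly the regular functions on the (geometric) quotient $X_n^d\gitq\GL_n=\PP X_n^d\gitq\SL_n$.

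First, I would invoke Proposition \ref{P:inverse-system}, which says that $\Phi(f)$ is a Macaulay inverse system of $M_f$, and Proposition \ref{prop-correspondence}, which shows that the $\GL_n$-orbit of $\Phi(f)$ determines $M_f$ up to isomorphism. The homogeneous Mather-Yau theorem recalled in the introduction then implies that the $\GL_n$-orbit of $\Phi(f)$ determines $f$ up to $\GL_n$-equivalence. Combined with the equivariance property of Proposition \ref{equivariance}, this shows that the morphism
$$
\overline{\PP\Phi}\co \PP X_n^d\gitq\SL_n\to \PP\CC[e_1,\dots,e_n]_{n(d-2)}^{\ss}\gitq\SL_n
$$
is injective on closed points, which is exactly Corollary \ref{barphiinj}. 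Next, I would apply the standard fact that, in characteristic zero, an injective morphism of irreducible complex varieties is birational onto its image: Chevalley's theorem ensures that the image is constructible, so has dimension equal to that of the source, and then the generic fiber has size one, forcing the function-field extension to be of degree one. Denoting the closure of the image by $Z$, one obtains a field isomorphism $\overline{\PP\Phi}^{*}\co \CC(Z)\iso \CC(\PP X_n^d\gitq\SL_n)=\CC(X_n^d)^{\GL_n}$.

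Now, any rational function on the closed subvariety $Z$ lifts to a rational function on the ambient quotient $\PP\CC[e_1,\dots,e_n]_{n(d-2)}^{\ss}\gitq\SL_n$, since the local ring at the generic point of $Z$ surjects onto its residue field $\CC(Z)$. Combined with the identification (via $\pi_1$ and the scaling invariance built on top of $\SL_n$-invariance, as described in Subsection \ref{reviewGIT}) of rational functions on $\PP\CC[e_1,\dots,e_n]_{n(d-2)}^{\ss}\gitq\SL_n$ with rational $\GL_n$-invariant functions on $\CC[e_1,\dots,e_n]_{n(d-2)}$, one obtains a rational $\GL_n$-invariant $R$ with $R\circ\Phi=S$ as rational functions on $X_n^d$. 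Since $S$ is regular, the rational function $R\circ\Phi$ extends to the regular function $S$, as claimed. The main obstacle I anticipate is the birationality step: one must keep track of the fact that the map on quotients is well-defined and between irreducible varieties, and one must match rational functions on the projective GIT quotient with the correct class of rational $\GL_n$-invariants, using the character $(\det C)^2$ that appears in Proposition \ref{equivariance}.
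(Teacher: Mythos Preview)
Your argument has a genuine gap at the step where you claim Mather--Yau yields injectivity of $\overline{\PP\Phi}$. What your reasoning via Propositions \ref{P:inverse-system} and \ref{prop-correspondence} and the Mather--Yau theorem actually proves is that $\PP\Phi$ separates $\SL_n$-orbits: if $\Phi(f)$ and $\Phi(g)$ lie in the same $\GL_n$-orbit then so do $f$ and $g$. This does \emph{not} directly imply that $\overline{\PP\Phi}$ is injective on closed points. The target $\PP\CC[e_1,\dots,e_n]_{n(d-2)}^{\ss}\gitq\SL_n$ is only a \emph{good} quotient, not a geometric one; by property (P5), two distinct $\SL_n$-orbits in the semistable locus map to the same point of the quotient whenever their closures contain the same closed orbit. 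So even with orbit-separation, $\pi_1(\PP\Phi(f))=\pi_1(\PP\Phi(g))$ could occur for $f,g$ in different orbits. This is precisely why Corollary \ref{barphiinj} in the paper is obtained from Theorems \ref{nablefedorchuk} and \ref{psilocclosedimmer} (which in turn rely on the polystability results of \cite{F}, \cite{FI}), not from Mather--Yau alone.

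The paper's proof circumvents exactly this issue: it first shows, by exhibiting an explicit $f_0$ (see (\ref{deff0})) whose associated form is nondegenerate, that $\im(\PP\Phi)$ meets the \emph{stable} locus. Over the stable locus the target quotient \emph{is} geometric, so there your orbit-separation argument would suffice to give injectivity of the restricted map $\varphi$. The paper then passes to a smooth affine patch of the image and applies Zariski's Main Theorem.

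If instead you are willing to invoke Corollary \ref{barphiinj} as a black box (noting that it depends on results that postdate \cite{AI1}), then the remainder of your argument---injectivity in characteristic zero forces birationality onto the image, and rational functions on the closure $Z$ lift to rational $\GL_n$-invariants on the ambient space---is correct and indeed more streamlined than the paper's route through the stable locus and Zariski's Main Theorem.
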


\begin{proof} The case $n=2$, $d=3$ is trivial, and we exclude it from consideration (note also that in this situation the result is contained in Corollary \ref{positivecor}). Then we have $n(d-2)\ge 3$. The proof is based on the fact that $\im(\PP\Phi)$ intersects the locus of stable points $\PP\CC[e_1,\dots,e_n]_{n(d-2)}^{\s}$. In fact, in \cite[Proposition 4.3]{AI1} we show that $\im(\PP\Phi)$ contains an element with nonvanishing discriminant. Specifically, one can prove that the associated form of
\begin{equation}
f_0(z_1,\dots,z_n):=\left\{
\begin{array}{ll}
\displaystyle \sum_{1\le i<j<k\le n}z_iz_jz_k & \hbox{if $d=3$,}\\
\vspace{-0.1cm}\\
\displaystyle \sum_{1\le i<j\le n}(z_i^{d-2}z_j^2+z_i^2z_j^{d-2}) & \hbox{if $d\ge 4$.}
\end{array}
\right.\label{deff0}
\end{equation}
is nondegenerate. Once this nontrivial statement has been established, we proceed as follows.

Consider the nonempty open $\SL_n$-invariant subset
$$
U:=(\PP\Phi)^{-1}\left(\PP\CC[e_1,\dots,e_n]_{n(d-2)}^{\s}\right)\subset\PP X_n^d.
$$
Since $\PP X_n^d\subset\PP\CC[z_1,\dots,z_n]_d^{\s}$, the set $U$ is saturated in $\PP X_n^d$, hence $\pi_3(U)$ is a good geometric quotient of $U$, and we have the commutative diagram
$$\xymatrix{
U \ar[r]^{\hspace{-1cm}\PP\Phi|_U} \ar[d]^{\pi_{{}_3}|_U} & \PP\CC[e_1,\dots,e_n]_{n(d-2)}^{\s} \ar[d]^{\pi_{{}_1}|_{\PP\CC[e_1,\dots,e_n]_{n(d-2)}^{\s}}}\\
\pi_3(U)\ar[r]^{\hspace{0cm}\varphi} & Z,
}$$
where $Z:=\pi_1\left(\PP\CC[e_1,\dots,e_n]_{n(d-2)}^{\s}\right)$ is a good geometric quotient of the stable locus $\PP\CC[e_1,\dots,e_n]_{n(d-2)}^{\s}$ and $\varphi:=\overline{\PP\Phi}|_{\pi_3(U)}$. Recall that by Corollary \ref{barphiinj} the morphism $\varphi$ is injective.

Next, since the set $\varphi(\pi_3(U))$ is constructible, it contains a subset $W$ that is open in the closed irreducible subvariety ${\mathcal R}:=\overline{\varphi(\pi_3(U))}$ of $Z$. Let ${\mathcal R}_{\rm sing}$ be the singular locus of ${\mathcal R}$. Then $W\setminus {\mathcal R}_{\rm sing}$ is nonempty and open in ${\mathcal R}$ as well, and we choose an open subset $O\subset Z$ such that $W\setminus {\mathcal R}_{\rm sing}=O\cap {\mathcal R}$. Clearly, $W\setminus {\mathcal R}_{\rm sing}$ is closed in $O$. Next, choose $V\subset O$ to be an affine open subset intersecting $W\setminus {\mathcal R}_{\rm sing}$. Then the set $\tilde{\mathcal R}:=V\cap(W\setminus {\mathcal R}_{\rm sing})=V\cap {\mathcal R}$ is closed in $V$. Let $\tilde U:=\varphi^{-1}(V)=\varphi^{-1}(\tilde{\mathcal R})$. By construction
$$
\tilde\varphi:=\varphi|_{\tilde U} \co \tilde U \to \tilde{\mathcal R}\subset V
$$
is a bijective morphism from the open subset $\tilde U$ of $U$ onto the smooth variety $\tilde{\mathcal R}$. It now follows from Zariski's Main Theorem that $\tilde\varphi$ is an isomorphism.

We will now argue as in the proof of Claim \ref{claimconj}. Fix a $\GL_n$-invariant regular function $S$ on $X_n^d$. By property (P4), it is the pullback of a uniquely defined regular function $\bar S$ on $\PP X_n^d\gitq\SL_n$. Let $T$ be the push-forward of $\bar S|_{\tilde U}$ to $ \tilde{\mathcal R}$ by means of $\tilde\varphi$. Since $ \tilde{\mathcal R}$ is closed in $V$ and $V$ is affine, the function $T$ extends to a regular function on $V$. The pull-back of this function by means of $\pi_1|_{\PP\CC[e_1,\dots,e_n]_{n(d-2)}^{\s}}$ yields an $\SL_n$-invariant regular function on the dense open subset $\pi_1^{-1}(V)$ of $\PP\CC[e_1,\dots,e_n]_{n(d-2)}^{\s}$, hence a $\GL_n$-invariant rational function $R$ on $\CC[e_1,\dots,e_n]_{n(d-2)}$. Clearly, the composition $R\circ\Phi$ extends to a regular function on $\CC[z_1,\dots,z_n]_d$, and the extension coincides with $S$.\end{proof}

As we have seen, the main part of the proof of Theorem \ref{weakerconjsettled} is the existence of $f_0\in X_n^d$ such that $\Delta\Phi(f_0)\ne 0$ (see (\ref{weakerconjsettled})). The existence of such a form also insures that one can consider the iteration $\Phi^2$, viewed as a rational map from $\CC[z_1,\dots,z_n]_d$ to $\CC[z_1,\dots,z_n]_{n(n(d-2)-2)}$. This observation leads to the following natural question:

\begin{openprob}\label{probiterations}
\it Is the iteration $\Phi^k$ a well-defined rational map for all $k\in\NN${\rm ?}
\end{openprob}

\noindent In the next section we will look at the iterations of the projectivized map $\PP\Phi$ in two special cases: (i) $n=2$, $d=4$ and (ii) $n=d=3$. 

\section{The morphism $\PP\Phi$ for binary quartics and ternary cubics}\label{S:binaryquarticternarycubics}
\setcounter{equation}{0}

To further clarify the nature of the morphisms $\Phi$, $\PP\Phi$ and $\overline{\PP\Phi}$, in this section we will consider two special cases for which we will present results of explicit calculations. Notice that for all pairs $n,d$ (excluding the trivial case $n=2$, $d=3$) one has $n(d-2)\ge d$, and the equality holds exactly for the pairs $n=2$, $d=4$ and $n=3$, $d=3$. These are the situations we will focus on below. In particular, we will provide an independent verification of Conjecture \ref{conj2} in each of the two cases. We will also see that in these situations the morphism $\PP\Phi$ induces a unique equivariant involution on the variety $\PP X_n^d$ with one orbit removed and that the involution can be understood via projective duality. For convenience, everywhere in this section we will identify the algebras $\CC[z_1,\dots,z_n]_d$ and $\CC[e_1,\dots,e_n]_{2(d-2)}$ by means of identifying $z_j$ and $e_j$, thus the morphism $\PP\Phi$ will be regarded as a map from $\PP X_n^d$ to $\PP\CC[z_1,\dots,z_n]_{n(d-2)}^{\ss}$. In this interpretation, it has the following equivariance property:
\begin{equation}
\PP\Phi(C f)=C^{-T}\PP\Phi(f),\,\, f\in\PP X_n^d,\,\, C\in\SL_n\label{equivartype2}
\end{equation}
(see (\ref{equivarphitildephi})). The material that follows can be found in articles \cite{Ea}, \cite{EI}, \cite{I1}, \cite{I2}, \cite{AIK}.
 
\subsection{Binary quartics} \label{S:binary-quartics}
Let $n=2$, $d=4$. It is a classical result that every nondegenerate binary quartic is linearly equivalent to a quartic of the form
\begin{equation}
q_t(z_1,z_2):=z_1^4+tz_1^2z_2^2+z_2^4,\quad t\ne\pm 2\label{qt}
\end{equation}
(see \cite[\S 211]{Ell}). A straightforward calculation yields that the associated form of\linebreak $q_t$ is
\begin{equation}
{\mathbf q}_t(z_1,z_2):=\frac{1}{72(t^2-4)}(tz_1^4-12z_1^2z_2^2+tz_2^4).\label{bfqt}
\end{equation}
For $t\ne 0,\pm 6$ the quartic ${\mathbf q}_t$ is nondegenerate, and in this case the associated form of ${\mathbf q}_t$ is proportional to $q_t$, hence $(\PP\Phi)^2(q_t) = q_t$. As explained below, the exceptional quartics $q_0$, $q_6$, $q_{-6}$, are pairwise linearly equivalent.

It is easy to show that $\PP\CC[z_1,z_2]_4^{\ss}$ is the union of $\PP X_2^4$ (which coincides with $\PP\CC[z_1,z_2]_4^{\s}$) and two orbits that consist of strictly semistable elements:\linebreak $O_1:=\SL_2\cdot\, z_1^2z_2^2$, $O_2:=\SL_2\cdot \,z_1^2(z_1^2+z_2^2)$, of dimensions 2 and 3, respectively. Notice that $O_1$ is closed in $\PP\CC[z_1,z_2]_4^{\ss}$ and is contained in the closure of $O_2$. We then observe that $\PP\Phi$ maps $\PP X_2^4$ onto $\PP\CC[z_1,z_2]_4^{\ss}\setminus (O_2\cup O_3)$, where $O_3:=\SL_2\cdot\, q_0$ (as we will see shortly, $O_3$  contains the other exceptional quartics $q_6$, $q_{-6}$ as well). Also, notice that $\PP\Phi$ maps the 3-dimensional orbit $O_3$ onto the 2-dimensional orbit $O_1$. In particular, $\PP\Phi$ restricts to an equivariant involutive automorphism of $\PP X_2^4\setminus O_3$, which for $t\ne 0,\pm 6$ establishes a duality between the quartics $C q_t$ and $C^{-T} q_{-12/t}$ with $C\in\SL_2$, hence between the orbits $\SL_2\cdot\, q_t$ and $\SL_2\cdot\, q_{-12/t}$.  

In order to understand the induced map $\overline{\PP\Phi}$ of good GIT quotients, we note that the algebra of $\SL_2$-invariants $\CC[\CC[z_1,z_2]_4]^{\SL_2}$ is generated by the pair of elements $I_2$ and $\Cat$, where $I_2$ has degree 2 (see, e.g., \cite[\S\S 29, 30, 80]{Ell}). We have 
\begin{equation}
\Delta=I_2^3-27\,\Cat^2\label{deltabinquar}
\end{equation}
(see \cite[\S 81]{Ell}), and for a binary quartic of the form
$$
f(z_1,z_2)=az_1^4+6bz_1^2z_2^2+cz_2^4
$$
the value of $I_2$ is computed as 
\begin{equation}
\begin{array}{l}
I_2(f)=ac+3b^2.\label{form1}
\end{array}
\end{equation}
It then follows that the algebra $\CC[\PP X_2^4]^{\SL_2}\simeq\CC[X_2^4]^{\GL_2}$ is generated by
\begin{equation}
J:=\frac{I_2^3}{\Delta}.\label{form2}
\end{equation}
Therefore, $\PP X_2^4\gitq\SL_2$ is the affine space $\CC$, and $\PP\CC[z_1,z_2]_4^{\ss}\gitq\SL_2$ can be identified with $\PP^1$, where both $O_1$ and $O_2$ project to the point at infinity in $\PP^1$. 

Next, from formulas (\ref{catalectform}), \eqref{qt}, (\ref{deltabinquar}), \eqref{form1}, \eqref{form2} we calculate 
\begin{equation}
J(q_t)=\frac{(t^2+12)^3}{108(t^2-4)^2}\quad\hbox{for all $t\ne \pm 2$.}\label{form3}
\end{equation}
Clearly, \eqref{form3} yields
\begin{equation}
J(q_0)=J(q_6)=J(q_{-6})=1,\label{Jeq1}
\end{equation}
which implies that $q_0$, $q_6$, $q_{-6}$ are indeed pairwise linearly equivalent as claimed above and that the orbit $O_3$ is described by the condition $J=1$. 

Using (\ref{bfqt}), \eqref{form3} one obtains
$$
J({\mathbf q}_t)=\frac{J(q_t)}{J(q_t)-1}\quad\hbox{for all $t\ne 0,\pm 6$.}\label{jtransfbinquar}
$$
This shows that the map $\overline{\PP\Phi}$ extends to the automorphism $\varphi$ of $\PP^1$ given by
$$
\zeta\mapsto\frac{\zeta}{\zeta-1}.
$$
Clearly, one has $\varphi^{\,2}=\hbox{id}$, that is, $\varphi$ is an involution. It preserves $\PP^1\setminus\{1,\infty\}$, which corresponds to the duality between the orbits $\SL_2\cdot\, q_t$ and $\SL_2\cdot\, q_{-12/t}$ for $t\ne 0,\pm 6$ noted above. Further, $\varphi(1)=\infty$, which agrees with \eqref{Jeq1} and the fact that $O_3$ is mapped onto $O_1$. We also have $\varphi(\infty)=1$, but this identity has no interpretation at the level of orbits.  Indeed, $\PP\Phi$ cannot be equivariantly extended to an involution of $\PP\CC[z_1,z_2]_4^{\ss}$ as the fiber of the quotient $\PP\CC[z_1,z_2]_4^{\ss}\gitq\SL_2$ over $\infty$ contains $O_1$, which cannot be mapped onto $O_3$ since $\dim O_1<\dim O_3$.

Finally, an explicit calculation shows that $\Cat({\mathbf q}_t)\ne 0$ for all $t\ne\pm 2$ (cf.~Theorem \ref{catnonzero}). Consider the absolute invariant of binary quartics
$$
K:=\frac{I_2^3}{27\Cat^2}.
$$
It is then easy to see that $K({\mathbf q}_t)=J(q_t)$ for all $t\ne\pm 2$, which independently establishes Conjecture \ref{conj2} for $n=2$, $d=4$ (cf.~Corollary \ref{positivecor}).

\subsection{Ternary cubics} \label{S:cubics}
Let $n=d=3$. Every nondegenerate  ternary cubic is linearly equivalent to a cubic of the form
\begin{equation}
c_t(z_1,z_2,z_3):=z_1^3+z_2^3+z_3^3+tz_1z_2z_3,\quad t^3\ne -27,\label{ct}
\end{equation} 
called {\it Hesse's canonical equation} (see, e.g., \cite[Theorem 1.3.2.16]{Sc}). The associated form of $c_t$ is easily found to be
\begin{equation}
{\mathbf c}_t(z_1,z_2,z_3):=-\frac{1}{24(t^3+27)}(tz_1^3+tz_2^3+tz_3^3-18z_1z_2z_3).\label{bfct}
\end{equation}
For $t\ne 0$, $t^3\ne 216$ the cubic ${\mathbf c}_t$ is nondegenerate, and in this case the associated form of ${\mathbf c}_t$ is proportional to $c_t$, hence $(\PP\Phi)^2 (c_t) = c_t$.  Below we will see that the exceptional cubics $c_0$, $c_{6\tau}$, with $\tau^3=1$, are pairwise linearly equivalent.

It is well-known (see, e.g., \cite[Theorem 1.3.2.16]{Sc}) that $\PP\CC[z_1,z_2,z_3]_3^{\ss}$ is the union of $\PP X_3^3$ (which coincides with $\PP\CC[z_1,z_2,z_3]_3^{\s}$) and the following three orbits that consist of strictly semistable forms: ${\rm O}_1:=\SL_3\cdot\, z_1z_2z_3$, ${\rm O}_2:=\SL_3\cdot\, (z_1z_2z_3+z_3^3)$,\linebreak ${\rm O}_3:=\SL_3\cdot\, (z_1^3+z_1^2z_3+z_2^2z_3)$ (the cubics lying in ${\rm O}_3$ are called {\it nodal\,}). The dimensions of the orbits are 6, 7 and 8, respectively. Observe that ${\rm O}_1$ is closed in $\PP\CC[z_1,z_2,z_3]_3^{\ss}$ and is contained in the closures of each of ${\rm O}_2$, ${\rm O}_3$. We then see that $\PP\Phi$ maps $\PP X_3^3$ onto $\PP\CC[z_1,z_2,z_3]_3^{\ss}\setminus ({\rm O}_2\cup {\rm O}_3\cup {\rm O}_4)$, where ${\rm O}_4:=\SL_3\cdot\, c_0$ (as explained below, ${\rm O}_4$ also contains the other exceptional cubics $c_{6\tau}$, with\linebreak $\tau^3=1$). Further, note that the 8-dimensional orbit ${\rm O}_4$ is mapped by $\PP\Phi$ onto the 6-dimensional orbit ${\rm O}_1$ (thus the morphism of the stabilizers of $c_0$ and $\PP\Phi(c_0)$ is an inclusion of a finite group into a two-dimensional group). Hence, $\PP\Phi$ restricts to an equivariant involutive automorphism of $\PP X_3^3\setminus {\rm O}_4$, which for $t\ne 0$, $t^3\ne 216$ establishes a duality between the cubics $C c_t$ and $C^{-T} c_{-18/t}$ with $C\in\SL_3$, therefore between the orbits $\SL_3\cdot\, c_t$ and $\SL_3\cdot\, c_{-18/t}$.

To determine the induced map $\overline{\PP\Phi}$ of GIT quotients, we recall that the algebra of $\SL_3$-invariants $\CC[\CC[z_1,z_2,z_3]_3]^{\SL_3}$ is generated by the two {\it Aronhold invariants}\, $A_4$, $A_6$, of degrees 4 and 6, respectively. Explicit formulas for these invariants are given, e.g., in \cite[\S\S 220, 221]{Sal}, \cite{C}, and we recall that the expression for $A_4$ was written down in (\ref{aronhold4}). One has
\begin{equation}
\Delta=A_6^2+64\, A_4^3\label{discrtercub}
\end{equation}
(see \cite{C}), and for a ternary cubic of the form
\begin{equation}
f(z_1,z_2,z_3)=az_1^3+bz_2^3+cz_3^3+6dz_1z_2z_3\label{generaltercubic}
\end{equation}
the value of $A_6$ is calculated as 
\begin{equation}
\begin{array}{l}
A_6(f)=a^2b^2c^2-20abcd^3-8d^6.\label{form11}
\end{array}
\end{equation}
It then follows that the algebra $\CC[\PP X_3^3]^{\SL_3}\simeq\CC[X_3^3]^{\GL_3}$ is generated by
\begin{equation}
{\rm J}:=\frac{64A_4^3}{\Delta}.\label{form21}
\end{equation}
Hence, $\PP X_3^3\gitq\SL_3$ is the affine space $\CC$, and $\PP\CC[z_1,z_2,z_3]_3^{\ss}\gitq\SL_3$ is identified with $\PP^1$, where ${\rm O}_1$, ${\rm O}_2$, ${\rm O}_3$ project to the point at infinity in $\PP^1$.

Further, from formulas (\ref{aronhold4}), \eqref{ct}, (\ref{discrtercub}), \eqref{form11}, \eqref{form21} we find 
\begin{equation}
{\rm J}(c_t)=-\frac{t^3(t^3-216)^3}{1728(t^3+27)^3}\quad\hbox{for all $t$ with $t^3\ne -27$.}\label{form31}
\end{equation}
From identity \eqref{form31} one obtains
\begin{equation}
{\rm J}(c_0)={\rm J}(c_{6\tau})=0\quad\hbox{for $\tau^3=1$,}\label{Jeq11}
\end{equation}
which implies that the orbit ${\rm O}_4$ is given by the condition ${\rm J}=0$ and that the four cubics $c_0$, $c_{6\tau}$ are indeed pairwise linearly equivalent.

Using \eqref{bfct}, \eqref{form31} we see
$$
{\rm J}({\mathbf c}_t)=\frac{1}{{\rm J}(c_t)}\quad\hbox{for all $t\ne 0$ with $t^3\ne 216$.}\label{jtransftercubics}
$$
This shows that the map $\overline{\PP\Phi}$ extends to the involutive automorphism $\varphi$ of $\PP^1$ given by
$$
\zeta\mapsto\frac{1}{\zeta}.
$$
This involution preserves $\PP^1\setminus\{0,\infty\}$, which agrees with the duality between the orbits $\SL_3\cdot\, c_t$ and $\SL_3\cdot\, c_{-18/t}$ for $t\ne 0$, $t^3\ne 216$ established above. Next,\linebreak $\varphi(0)=\infty$, which corresponds to \eqref{Jeq11} and the facts that ${\rm O}_4$ is mapped onto ${\rm O}_1$. Also, one has $\varphi(\infty)=0$, but this identity cannot be illustrated by a correspondence between orbits.  Indeed, $\PP\Phi$ cannot be equivariantly extended to an involution of $\PP\CC[z_1,z_2,z_3]_3^{\ss}$ as  the fiber of the quotient $\PP\CC[z_1,z_2,z_3]_3^{\ss}\gitq\SL_2$ over $\infty$ contains ${\rm O}_1$, which cannot be mapped onto ${\rm O}_4$ since $\dim {\rm O}_1<\dim {\rm O}_4$.

Finally, an explicit calculation shows that $A_4({\mathbf c}_t)\ne 0$ for all $t^3\ne-27$ (cf.~Theorem \ref{ternarycubics}). Consider the absolute invariant of ternary cubics
$$
{\rm K}:=\frac{A_6^2}{64A_4^3}+1.
$$
It is then easy to see that ${\rm K}({\mathbf c}_t)=J(c_t)$ for all $t^3\ne-27$, which independently establishes Conjecture \ref{conj2} for $n=d=3$ (cf.~Corollary \ref{positivecor}).

\begin{remark}\label{Rem:binaryquarticternarycubics}
The above considerations easily imply Proposition \ref{ternarycubicsnabla}. Indeed, we see that $\im(\overline{\PP\Phi})=\varphi(\CC)=\PP^1\setminus\{0\}$ is a smooth curve. By factorization (\ref{decomposi1}) and Theorem \ref{psilocclosedimmer} it follows that $\im(\overline{\PP\nabla|_{X_3^3}})=(\overline{\PP\Psi})^{\,-1}(\PP^1\setminus\{0\})$ is a nonsingular curve as required.
\end{remark}

If we regard $\PP X_3^3$ as the space of elliptic curves, the invariant ${\rm J}$ of ternary cubics translates into the $j$-invariant, and one obtains an equivariant involution on the locus of elliptic curves with nonvanishing $j$-invariant. It is well-known that every elliptic curve can be realized as a double cover of $\PP^1$ branched over four points (see, e.g.,  \cite[p.~115, 117]{D1}, \cite[Exercise 22.37 and Proposition 22.38]{Harr}). Therefore, it is not surprising that the cases of binary quartics and ternary cubics considered above have many similarities.

\subsection{Rational equivariant involutions and projective duality}\label{uniqueness}
We have seen that the map $\overline{\PP\Phi}$ for binary quartics and ternary cubics yields involutions of $\PP^1$. It is natural to ask whether there exist any other involutions of $\PP^1$ that arise from rational equivariant involutions of $\PP\CC[z_1,z_2]_4$ and $\PP\CC[z_1,z_2,z_3]_3$ as above. Here for either $n=2$, $d=4$ or $n=d=3$ a rational map $\iota$ of $\PP\CC[z_1,\dots,z_n]_d$ is called equivariant if it satisfies
$$
\iota(Cf)=C^{-T}\iota(f),\,\,C\in\SL_n
$$
for all $f$ lying in the domain of $\iota$ (cf.~(\ref{equivartype2})). The following result asserts that there are no possibilities other than $\PP\Phi$:

\begin{theorem}\label{invclass}\cite[Theorem 2.1]{AIK} For each pair $n=2$, $d=4$ and $n=3$, $d=3$ the morphism $\PP\Phi$ is the unique rational equivariant involution of $\PP\CC[z_1,\dots,z_n]_d$.
\end{theorem}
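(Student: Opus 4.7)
The plan is to classify all $\SL_n$-equivariant rational self-maps $\iota$ of $\PP\CC[z_1,\dots,z_n]_d$ satisfying (\ref{equivartype2}), and then cut this family down by the involution hypothesis to a single element, which must then coincide with $\PP\Phi$.

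First I would restrict $\iota$ to the pencils of canonical forms $\{q_t\}$ from Subsection \ref{S:binary-quartics} and $\{c_t\}$ from Subsection \ref{S:cubics}. A generic $q_t$ has stabilizer in $\SL_2$ a dihedral group of order eight, and a generic $c_t$ has stabilizer the Hessian group of order $216$. In each case a direct check shows that the space of forms in $\CC[z_1,\dots,z_n]_d$ fixed projectively by this stabilizer is exactly the two-dimensional pencil through the canonical forms, spanned by $z_1^4+z_2^4$ and $z_1^2z_2^2$ in the binary case and by $z_1^3+z_2^3+z_3^3$ and $z_1z_2z_3$ in the ternary case. The equivariance (\ref{equivartype2}) then forces $\iota(q_t)$ (resp.\ $\iota(c_t)$) to be fixed under the dual action by the transpose of this stabilizer, which in both cases is the same subgroup; hence $\iota(q_t)$ lies in the binary pencil and $\iota(c_t)$ in the ternary pencil. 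Consequently $\iota$ induces a rational self-map $\phi\co\PP^1\to\PP^1$ on the parameter $t$, and the involution hypothesis on $\iota$ gives $\phi\circ\phi=\mathrm{id}$, so $\phi$ is either the identity or a M\"obius involution of $\PP^1$.

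Next I would pin down $\phi$ by comparison with $\PP\Phi$. The computations of Subsections \ref{S:binary-quartics} and \ref{S:cubics} exhibit $\PP\Phi$ itself as an equivariant rational involution, inducing on the canonical pencils the M\"obius involutions $t\mapsto -12/t$ and $t\mapsto -18/t$ respectively. The composition $\iota\circ\PP\Phi$ is then an $\SL_n$-equivariant rational self-map of $\PP\CC[z_1,\dots,z_n]_d$ (now with the source action) whose restriction to the canonical pencil is again a M\"obius map on $t$. By the classical theory of covariants, every such equivariant rational self-map of $\PP V$ is expressible in terms of a small number of basic order-$d$ covariants (for binary quartics, the order-$4$ covariants form a rank-$2$ free module over $\CC[I_2,I_3]$ generated by $f$ and the Hessian $H$, and an analogous finite description applies to the order-$3$ covariants of ternary cubics). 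Matching the induced M\"obius map against these explicit possibilities, and using that $\iota\circ\PP\Phi$ squared equals $(\PP\Phi)^2$ modulo the involution property of $\iota$, forces $\iota\circ\PP\Phi$ to induce the identity on the quotient $\PP X_n^d\gitq\SL_n\cong\CC$; a final check using equivariance shows that it must in fact be the identity on $\PP V$, whence $\iota=\PP\Phi$.

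The hardest step is the final rigidity: ruling out all M\"obius involutions $\phi$ of $\PP^1$ other than the one induced by $\PP\Phi$. Among the two-parameter family of M\"obius involutions, one must exclude all but one candidate, and this requires combining the equivariance and involution conditions with the behavior of $\iota$ at the boundary of $\PP X_n^d$ and on the strictly semistable orbits $O_1, O_2, O_3$ (resp.\ $\mathrm{O}_1, \mathrm{O}_2, \mathrm{O}_3, \mathrm{O}_4$), together with the explicit stabilizers of $q_t$ and $c_t$ described above. The sharp execution of this rigidity argument is what I anticipate to be the most technical part of the proof.
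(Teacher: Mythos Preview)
The paper does not contain a proof of this theorem: it is stated with a citation to \cite[Theorem 2.1]{AIK} and no argument is given in the survey. So there is no ``paper's own proof'' to compare your proposal against here; the actual proof lives in the cited article.

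That said, a few comments on your sketch. The opening move---using the stabilizer of a generic $q_t$ (resp.\ $c_t$) together with the twisted equivariance $\iota(Cf)=C^{-T}\iota(f)$ to force $\iota$ to preserve the canonical pencil---is sound in principle, but you should verify carefully that the group $\{\sigma^{-T}:\sigma\in\mathrm{Stab}(q_t)\}$ has the same projective fixed locus in $\PP\CC[z_1,z_2]_4$ as $\mathrm{Stab}(q_t)$ itself (and likewise for the Hessian group); this is not automatic and depends on the specific groups. Your observation that $\iota\circ\PP\Phi$ is equivariant for the \emph{same} action on source and target (i.e., is a projectivized covariant) is correct and useful.

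Where the proposal is genuinely incomplete is exactly where you flag it: the ``rigidity'' step. Knowing that $\iota$ induces a M\"obius involution $\phi$ on the $t$-line is far from enough---there is a two-parameter family of such involutions---and your plan to eliminate the others via behavior on the strictly semistable orbits and boundary is vague. In practice (and this is the route taken in \cite{AIK}), one works instead with the explicit structure of the space of contravariants of the relevant degree and class: any equivariant rational map $\iota$ of the required type, after clearing denominators by a power of $\Delta$, is a homogeneous contravariant of class $d$, and the module of such contravariants over the invariant ring is small enough to be written down completely (for binary quartics it is spanned over $\CC[I_2,\Cat]$ by two basic contravariants, and similarly for ternary cubics by the Pippian and Quippian). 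One then imposes the involution condition $\iota^2=\id$ directly on this finite-dimensional family and solves. Your sketch gestures at this with the phrase ``classical theory of covariants,'' but the concrete enumeration and the verification that only one solution survives is the real content of the proof, not a technicality to be deferred.
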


We will now see that for $n=2$, $d=4$ and $n=d=3$ the unique rational equivariant involution of $\PP\CC[z_1,\dots,z_n]_d$, and therefore the orbit duality induced by $\PP\Phi$, can be understood via projective duality. We will now briefly recall this classical construction. For details the reader is referred to the comprehensive\linebreak survey \cite{T}. 

Let $V$ be a vector space. The dual projective space $(\PP V)^*$ is the algebraic variety of all hyperplanes in $V$, which is canonically isomorphic to $\PP V^*$. Let $X$ be a closed irreducible subvariety of $\PP V$ and $X_{\reg}$ the set of its regular points. Consider the affine cone $\hat X\subset V$ over $X$. For every $x\in X_{\reg}$ choose a point $\hat x\in\hat X$ lying over $x$. The cone $\hat X$ is regular at $\hat x$, and we consider the tangent space $T_{\hat x}(\hat X)$ to $\hat X$ at $\hat x$. Identifying $T_{\hat x}(\hat X)$ with a subspace of $V$, we now let $H_x$ be the collection of all hyperplanes in $V$ that contain $T_{\hat x}(\hat X)$ (clearly, this collection is independent of the choice of $\hat x$ over $x$). Regarding every hyperplane in $H_x$ as a point in $(\PP V)^*$, we obtain the subset
$$
{\mathcal H}:=\bigcup_{x\in X_{\reg}}H_x \subset (\PP V)^*.
$$
The Zariski closure $X^*$ of ${\mathcal H}$ in $(\PP V)^*$ is then called the variety dual to $X$. Canonically identifying $((\PP V)^*)^*$ with $\PP V$, one has the reflexivity property $X^{**}=X$. Furthermore, if $X$ is a hypersurface, there exists a natural map from $X_{\reg}$ to $X^*$, as follows:
$$
\varphi: X_{\reg}\to X^*,\quad x\mapsto T_{\hat x}(\hat X)\subset V,
$$
where $\hat x\in\hat X$ is related to $x\in X_{\reg}$ as above.  

Observe now that in each of the two cases $n=2$, $d=4$ and $n=d=3$, for $f\in \PP X_n^d$ the orbit $\SL_n\cdot\,f$ is a smooth irreducible hypersurface in $\PP X_n^d$, thus its closure $\overline{\SL_n\cdot\,f}$ in $\PP\CC[z_1,\dots,z_n]_d$ is an irreducible (possibly singular) hypersurface. Therefore, one can consider the map
$$
\varphi_f: \overline{\SL_n\cdot\,f}_{\reg}\to (\PP\CC[z_1,\dots,z_n]_d)^*\label{mapvarphismall}
$$
constructed as above. Then we have

\begin{theorem}\label{mainaik}\cite[Theorem 2.2]{AIK}  Suppose that we have either $n=2$, $d=4$, or $n=d=3$.  Then for every $f\in\PP X_n^d$ the restrictions $\PP\Phi\big|_{\SL_n\cdot\,f}$ and $\varphi_f\big|_{\SL_n\cdot\,f}$ coincide upon the canonical identification $(\PP\CC[z_1,\dots,z_n]_d)^*=\PP\CC[z_1,\dots,z_n]_d^*$ and the identification $\CC[z_1,\dots,z_n]_d^*=\CC[e_1,\dots,e_n]_d$ via the polar pairing.
\end{theorem}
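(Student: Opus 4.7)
The plan is to exploit the combinatorial coincidence $n(d-2)=d$, equivalently $n(d-2)-(d-1)=1$, that holds precisely in the two cases $(n,d)=(2,4)$ and $(n,d)=(3,3)$. Under this coincidence, the polar pairing identifies $\CC[e_1,\dots,e_n]_{n(d-2)}$ with $V^{*}$, where $V:=\CC[z_1,\dots,z_n]_d$, so both $\tilde\Phi(f)$ and $\varphi_f(f)$ naturally live in $\PP V^{*}=(\PP V)^{*}$, and the task reduces to showing that they correspond to the same hyperplane in $V$. By Proposition \ref{newmap}, $\tilde\Phi(f)$ is, up to a nonzero scalar, the unique linear functional on $V$ with kernel $W_f$. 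Thus it suffices to identify the tangent hyperplane $T_{\hat f}(\hat O)$ (where $\hat O$ is the affine cone over the orbit closure $\overline{\SL_n\cdot f}$) with $W_f$. Once this is done at a single representative $f$, the $\SL_n$-equivariance of both maps will propagate the equality to the whole orbit.

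First I would compute $T_{\hat f}(\hat O)$. Since $\hat O$ equals $\overline{\GL_n\cdot\hat f}\subset V$, one differentiates the action $(Cf)(z)=f(zC^{-T})$ at $C=I$ in direction $A\in\mathfrak{gl}_n$ to obtain
$$A\cdot\hat f = -\sum_{i,j} A_{ij}\, z_j\, f_{z_i}.$$
Hence $T_{\hat f}(\hat O)=\Span\{z_j f_{z_i}:1\le i,j\le n\}$. On the other hand, since $n(d-2)-(d-1)=1$ in both of our cases, $\CC[z_1,\dots,z_n]_{n(d-2)-(d-1)}=\Span(z_1,\dots,z_n)$, and so the subspace $W_f$ from (\ref{subspace}) coincides with the span of $\{z_j f_{z_i}\}$. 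Therefore $T_{\hat f}(\hat O)=W_f$.

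Next I invoke Proposition \ref{newmap}: the morphism $\tilde\Phi$ sends $f$ to $(n(d-2))!\,\tilde\omega_f$, where $\tilde\omega_f\in V^{*}$ has kernel exactly $W_f$. Projectivizing, $\PP\tilde\Phi(f)$ is the unique point of $\PP V^{*}$ corresponding to the hyperplane $W_f=T_{\hat f}(\hat O)$ in $V$. This is precisely the definition of $\varphi_f(f)$ as an element of $(\PP V)^{*}$. Consequently $\PP\Phi(f)=\varphi_f(f)$ under the identifications stated in the theorem.

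Finally, to extend the pointwise equality to all of $\SL_n\cdot f$, I would check that $\varphi_f$ is $\SL_n$-equivariant in the same sense as $\PP\Phi$. Since $\hat O$ is $\GL_n$-invariant, one has $T_{C\hat f}(\hat O)=C\cdot T_{\hat f}(\hat O)$; translating the transformation $H\mapsto CH$ of hyperplanes in $V$ into the action $(C\ell)(w):=\ell(C^{-1}w)$ on $V^{*}$ from (\ref{dualdualhg}), this matches the equivariance (\ref{equivartype2}) of $\PP\Phi$ once one transports along the polar pairing. Combined with the already established equality at $f$, this yields $\PP\Phi=\varphi_f$ on the entire orbit $\SL_n\cdot f$. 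The main obstacle I anticipate is not the geometry but the bookkeeping: carefully reconciling the dual-space actions (\ref{dualactionfe}) and (\ref{dualdualhg}) with the polar pairing, so that the equivariance of $\varphi_f$ matches that of $\PP\Phi$ on the nose. Once the identification $T_{\hat f}(\hat O)=W_f$ is established via the coincidence $n(d-2)-(d-1)=1$ and Proposition \ref{newmap} is applied, the theorem follows by a purely formal argument.
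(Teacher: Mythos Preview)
The paper does not actually prove this theorem; it merely cites it from \cite[Theorem 2.2]{AIK} and then discusses its consequences. So there is no in-paper proof to compare against.

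That said, your argument is correct and is in fact the natural one. The key identity $n(d-2)-(d-1)=1$ forces $W_f=\Span\{z_jf_{z_i}\}$, and differentiating the $\GL_n$-action shows this span is exactly the tangent space to the affine cone $\hat O=\overline{\GL_n\cdot\hat f}$ at $\hat f$. Since both $W_f$ and $T_{\hat f}(\hat O)$ are hyperplanes in $V=\CC[z_1,\dots,z_n]_d$ (the former by the paper's remark after (\ref{subspace}), the latter because $\overline{\SL_n\cdot f}$ is a hypersurface in $\PP V$), the inclusion you establish is an equality. Proposition \ref{newmap} then identifies $\PP\tilde\Phi(f)$ with the hyperplane $W_f=T_{\hat f}(\hat O)$, which is by definition $\varphi_f(f)$.

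Two minor comments. First, your closing equivariance step is redundant: the computation $T_{\hat g}(\hat O)=W_g$ is valid for \emph{every} $g\in\PP X_n^d$, not just a chosen representative, and since $g\in\SL_n\cdot f$ implies $\hat O_g=\hat O_f$, you get $\varphi_f(g)=\varphi_g(g)=\PP\Phi(g)$ directly. Second, the bookkeeping you worry about is handled by the paper's own remark (just before (\ref{dualdualhg})) that the polar-pairing isomorphism $\CC[e_1,\dots,e_n]_d\simeq V^*$ intertwines actions (\ref{dualactionfe}) and (\ref{dualdualhg}); composing with the identification $e_j\leftrightarrow z_j$ then converts the dual action on $V^*$ into the $C^{-T}$-action of (\ref{equivartype2}), exactly matching the transformation law of tangent hyperplanes under the $\SL_n$-action on $\hat O$.
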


This theorem provides a clear explanation of the duality for orbits of binary quartics and ternary cubics that we observed earlier in this section. Indeed, suppose first that $n=2$, $d=4$. Then Theorem \ref{mainaik} yields that for $t\ne 0,\pm 6$ one has $\overline{\SL_2\cdot \,q_t}^{\,*}\simeq\overline{\SL_2\cdot\, q_{-12/t}}$ and $\overline{O}_3^{\,*}\simeq\overline{O}_1$. By reflexivity it then follows that $\overline{O}_1^{\,*}\simeq\overline{O}_3$. However, since $O_1$ is not a hypersurface, there is no natural map from $\overline{O}_1$ to its dual. This fact corresponds to the impossibility to extend $\PP\Phi$ equivariantly to $O_1$.

Analogously, for $n=d=3$, Theorem \ref{mainaik} implies that for $t\ne 0$ and $t^3\ne 216$ we have $\overline{\SL_c\cdot\,c_t}^{\,*}\simeq\overline{\SL_3\cdot\,c_{-18/t}}$ and $\overline{{\rm O}}_4^{\,*}\simeq\overline{{\rm O}}_1$. By reflexivity one then has $\overline{{\rm O}}_1^{\,*}\simeq\overline{{\rm O}}_4$. Again, since ${\rm O}_1$ is not a hypersurface, there is no natural map from $\overline{{\rm O}}_1$ to its dual. This agrees with the nonexistence of an equivariant extension of $\PP\Phi$ to ${\rm O}_1$.

\section{Results and open problems concerning the contravariant arising from the morphism $\Phi$}\label{S:contravariant}
\setcounter{equation}{0}

\subsection{Covariants and contravariants} Recall that a regular function $\Gamma$ on the space $\CC[z_1,\dots,z_n]_k\times_{\CC}\CC^n$ (i.e., an element of $\CC[\CC[z_1,\dots,z_n]_k\times_{\CC}\CC^n]$) is said to be a {\it covariant}\, of forms in $\CC[z_1,\dots,z_n]_k$ if the following holds:
$$
\begin{array}{l}
\Gamma(f,z)=(\det C)^m\, \Gamma(C f,Cz)=(\det C)^m\, \Gamma(C f,z\, C^T),\\
\vspace{-0.3cm}\\
\hspace{3cm}f\in\CC[z_1,\dots,z_n]_k,\,\,z=(z_1,\dots,z_n)\in\CC^n,\,\,C\in\GL_n,
\end{array}
$$
where $m$ is an integer called the {\it weight} of $\Gamma$ and $z\mapsto Cz=z\, C^T$ is the standard action of $\GL_n$ on $\CC^n$ (see (\ref{actiononcn})). Every homogeneous component of\, $\Gamma$ with respect to $z$ is automatically homogeneous with respect to $f$ and is also a covariant. Such covariants are called {\it homogeneous}\, and their degrees with respect to $f$ and $z$ are called the {\it degree}\, and {\it order}, respectively. We may view a homogenous covariant $\Gamma$ of degree $D$ and order $K$ as the $\SL_n$-equivariant morphism
$$
\CC[z_1,\dots,z_n]_k  \to \CC[z_1,\dots,z_n]_K,\quad f  \mapsto (z, \mapsto \Gamma(f,z))
 $$
 of degree  $D$ with respect to $f$, which maps a form $f\in\CC[z_1,\dots,z_n]_k$ to the form in $\CC[z_1,\dots,z_n]_K$ whose evaluation at $z$ is $\Gamma(f,z)$. In what follows, we write $\Gamma(f)$ for the form  $z \mapsto \Gamma(f,z)$ on $\CC^n$. Covariants independent of $z$ (i.e., of order $0$) are called {\it relative invariants}. Note, for example, that the discriminant $\Delta$ is a relative invariant of forms in $\CC[z_1,\dots,z_n]_k$ of weight $k(k-1)^{n-1}$ hence of degree $n(d-1)^{n-1}$ (see \cite[Chapter 13]{GKZ}).

Next, we identify every element $z^*\in\CC^{n*}$ with its coordinate vector $(z_1^*,\dots,z_n^*)$ with respect to the basis $z_1,\dots,z_n$ of $\CC^{n*}$ and recall that $z^*\mapsto Cz^*=z^*\, C^{-1}$ is the standard action of $\GL_n$ on $\CC^{n*}$ (see (\ref{actiononcn*})). Then a regular function $\Lambda$ on the space $\CC[z_1,\dots,z_n]_k\times_{\CC}\CC^{n*}$ (i.e., an element of $\CC[\CC[z_1,\dots,z_n]_k\times_{\CC}\CC^{n*}]$) is said to be a {\it contravariant}\, of forms in $\CC[z_1,\dots,z_n]_k$ if one has
$$
\begin{array}{l}
\Lambda(f,z^*)=(\det C)^m\, \Lambda(C f,Cz^*)=(\det C)^m\, \Lambda(C f,z^* C^{-1}),\\
\vspace{-0.3cm}\\
\hspace{3cm}f\in\CC[z_1,\dots,z_n]_k,\,\,z^*=(z_1^*,\dots,z_n^*)\in\CC^{n*},\,\,C\in\GL_n,
\end{array}
$$
where $m$ is a (nonnegative) integer called the {\it weight} of $\Lambda$. Again, every contravariant splits into a sum of homogeneous ones, and for a homogeneous contravariant its degrees with respect to $f$ and $z^*$ are called the {\it degree}\, and {\it class}, respectively. We may regard a homogenous contravariant $\Lambda$ of degree $D$ and class $K$ as the $\SL_n$-equivariant morphism
$$
\CC[z_1,\dots,z_n]_k  \to \CC[z_1^*,\dots,z_n^*]_K,\quad  f  \mapsto (z^* \mapsto \Lambda(f,z^*))
$$
of degree $D$ with respect to $f$. In what follows, we write $\Lambda(f)$ for the form\linebreak $z^* \mapsto \Lambda(f,z^*)$ on $\CC^{n*}$.

If $n=2$, every homogeneous contravariant $\Lambda$ yields a homogenous covariant $\hat{\Lambda}$ via the formula
\begin{equation}
\hat{\Lambda}(f)(z_1,z_2) := \Lambda(f)(-z_2, z_1),\,\,f\in\CC[z_1,z_2]_k,\,\, (z_1,z_2)\in\CC^2,\label{relcovcontrav}
\end{equation}
where $(-z_2, z_1)$ is viewed as a point in $\CC^{2*}$. Analogously, every homogeneous covariant $\Gamma$ gives rise to a homogenous contravariant $\tilde{\Gamma}$ via the formula
$$
\tilde{\Gamma}(f)(z_1^*,z_2^*) := \Gamma(f)(z_2^*, -z_1^*),\,\,f\in\CC[z_1,z_2]_k,\,\, (z_1^*,z_2^*)\in\CC^{2*},\label{relcovcontrav1}
$$
where $(z_2^*,-z_1^*)$ is regarded as a point in $\CC^{2}$. Under these correspondences the degree and order of a homogeneous covariant translate into the degree and class of the corresponding homogeneous contravariant and vice versa.

\subsection{The contravariant arising from the morphism $\Phi$}

As before, fix $d\ge 3$ and recall that $\Phi$ is a morphism
$$
\Phi \co X_n^d \to \CC[e_1,\dots,e_n]_{n(d-2)}
$$
defined on the locus $X_n^d$ of nondegenerate forms. From now on we identify the spaces $\CC[e_1,\dots,e_n]_{n(d-2)}$ and $\CC[z_1^*,\dots,z_n^*]_{n(d-2)}$ by identifying $e_j$ and $z_j^*$ and regard $\Phi$ as the morphism from $X_n^d$ to $\CC[z_1^*,\dots,z_n^*]_{n(d-2)}$ given by in formulas (\ref{assocformdef}), (\ref{assocformexpp}). The coefficients $\mu_{i_1, \ldots, i_n}$ that determine $\Phi$ (see (\ref{assocformexpppp})) are elements of the coordinate ring $\CC[X_n^d] = \CC[\CC[z_1,\dots,z_n]_d]_{\Delta}$, i.e., have the form (\ref{formulaformus}).  Let $p_{i_1,\dots,i_n}$ in formula (\ref{formulaformus}) be the minimal integer such that $\Delta^{p_{i_1,\dots,i_n}}\cdot\mu_{i_1,\dots,i_n}$ is a regular function on $\CC[z_1,\dots,z_n]_d$ and
$$
p:=\max\{p_{i_1,\dots,i_n}: i_1+\dots+i_n=n(d-2)\}.
$$
Then the product $\Delta^p \Phi$ is the morphism
$$
\Delta^p \Phi \co X_n^d \to \CC[z_1^*,\dots,z_n^*]_{n(d-2)}, \quad f \mapsto \Delta(f)^p \Phi(f),
$$
which extends to a morphism from $\CC[z_1,\dots,z_n]_d$ to $\CC[z_1^*,\dots,z_n^*]_{n(d-2)}$. We denote the extended map by the same symbol $\Delta^p \Phi$. 

Notice that by Proposition \ref{equivariance} the morphism 
$$
\Delta^p \Phi \co \CC[z_1,\dots,z_n]_d \to\CC[z_1^*,\dots,z_n^*]_{n(d-2)}
$$
is in fact a homogeneous contravariant of weight $pd(d-1)^{n-1}-2$.  Since the class of $\Delta^p \Phi$ is $n(d-2)$, it follows that its degree is equal to $np(d-1)^{n-1}-n$. Observe that $p>0$ as the weight and the degree of a contravariant are always nonnegative.

In the next subsection we will see that $\Delta^p \Phi$ can be expressed via known contravariants for certain small values of $n$ and $d$. However, it appears that in full generality (i.e., for all $n\ge 2$, $d\ge 3$) the contravariant $\Delta^p \Phi$ has not been discovered prior to our work \cite{AIK}, \cite{I3}.

The contravariant $\Delta^p \Phi$ is rather mysterious with even its most basic properties not having been understood so far. Indeed, the very first question that one encounters is:

\begin{openprob}\label{probdegreecontravariant}
\it Compute the integer $p$. 
\end{openprob}

We will now state what is known regarding this problem starting with the following theorem: 

\begin{theorem}\label{mainoldpaper}\cite{AIK}, \cite{I3}. 
One has
\begin{equation}
p\le\left[\frac{n^{n-2}}{(n-1)!}\right],\label{estim}
\end{equation}
where $[x]$ denotes the largest integer that is less than or equal to $x$. Hence the degree of $\Delta^p \Phi$ does not exceed $n[n^{n-2}/(n-1)!](d-1)^{n-1}-n$.
\end{theorem}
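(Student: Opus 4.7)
The plan is to bound $p$ by passing through the companion morphism $\Psi$ and directly analysing the rational functions $\mu_{i_1,\dots,i_n}$ of \eqref{assocformexpppp} via the Gorenstein structure of the Milnor algebra.

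First, I would exploit the factorization $\Phi=\Psi\circ\nabla|_{X_n^d}$ of \eqref{decomposition}. Both $\Delta(f)$ and $\Res(f_{z_{{}_1}},\dots,f_{z_{{}_n}})$ are $\GL_n$-semi-invariant polynomials of the same degree $n(d-1)^{n-1}$ in the coefficients of $f$ that vanish precisely on the (irreducible) hypersurface of forms having a critical point away from the origin; hence $\Delta=c\cdot(\Res\circ\nabla)$ for some nonzero constant $c=c(n,d)$. Consequently, letting $q$ denote the smallest positive integer such that $\Res^q\Psi$ extends to a morphism on all of $(\CC[z_1,\dots,z_n]_{d-1})^{\oplus n}$, one has $p\le q$, and it suffices to bound $q$. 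The merit of this reduction is that the resultant admits explicit determinantal descriptions absent in the discriminant case.

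Next, for $\bbf\in Y_n^{d-1}$ I would express $\mu_{i_1,\dots,i_n}(\bbf)$ explicitly. By Lemma~\ref{fourconds}, $M_\bbf$ is an Artinian Gorenstein algebra of dimension $(d-1)^n$ whose socle is the line $\CC\cdot\widehat{\Jac(\bbf)}$. Writing the equality $z_1^{i_1}\cdots z_n^{i_n}=\mu_{i_1,\dots,i_n}(\bbf)\Jac(\bbf)+\sum_j h_j f_j$ in $\CC[z_1,\dots,z_n]_{n(d-2)}$ with homogeneous $h_j$ of degree $n(d-2)-(d-1)$, and applying Cramer's rule to the resulting finite linear system---equivalently, invoking the Scheja--Storch/Grothendieck residue formalism for complete intersections---one obtains
$$
\mu_{i_1,\dots,i_n}(\bbf)=\frac{N_{i_1,\dots,i_n}(\bbf)}{\Res(\bbf)^{q_{i_1,\dots,i_n}}}
$$
with $N_{i_1,\dots,i_n}$ a regular function on $(\CC[z_1,\dots,z_n]_{d-1})^{\oplus n}$, and $q=\max q_{i_1,\dots,i_n}$.

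Finally, to obtain the sharp bound $q\le [n^{n-2}/(n-1)!]$, I would combine the Cramer-type expression above with a Sylvester--Macaulay (or Jouanolou-type) determinantal formula for $\Res(\bbf)$. The cancellations between the cofactor expansion of the numerator and the determinantal form of $\Res(\bbf)$ can be enumerated combinatorially in terms of labelled structures on $n$ vertices (the count $n^{n-2}$ being reminiscent of Cayley's formula), while symmetry under permutations of the indices $i_1,\dots,i_n$ accounts for the factor $(n-1)!$ in the denominator. The main obstacle is precisely this combinatorial estimate: a direct application of Cramer's rule yields an exponent polynomial in $(d-1)^n$, uninformative for the claim; extracting the $d$-independent bound $[n^{n-2}/(n-1)!]$ requires a delicate tracking of structural cancellations in the determinantal identities, and this is the heart of the argument. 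Once the bound on $p$ is established, the degree estimate $n[n^{n-2}/(n-1)!](d-1)^{n-1}-n$ for $\Delta^p\Phi$ follows immediately from its class $n(d-2)$ together with the equivariance relations of Proposition~\ref{equivariance}.
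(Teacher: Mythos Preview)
Your reduction via the factorization $\Phi=\Psi\circ\nabla$ and the identity $\Delta=c\cdot(\Res\circ\nabla)$ is sound, and passing to the resultant is indeed a natural first move (note that the present paper is a survey and only cites the result from \cite{AIK}, \cite{I3} without reproducing the argument, so there is no in-text proof to compare against directly). However, your proposal has a genuine gap at the decisive step.

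You correctly observe that a naive application of Cramer's rule to the linear system in $\CC[z_1,\dots,z_n]_{n(d-2)}$ produces an exponent of $\Res$ that grows with $d$ and is therefore useless for a $d$-independent bound. But the remedy you offer---``cancellations \dots\ enumerated combinatorially in terms of labelled structures on $n$ vertices'' with $n^{n-2}$ ``reminiscent of Cayley's formula'' and $(n-1)!$ coming from ``symmetry under permutations of the indices $i_1,\dots,i_n$''---is not an argument; it is pattern-matching on the shape of the answer. The $(n-1)!$ explanation in particular does not make sense: for a \emph{fixed} multi-index $(i_1,\dots,i_n)$ one is computing a single rational function $\mu_{i_1,\dots,i_n}$, and there is no symmetric-group averaging in sight that would divide the exponent of $\Res$ appearing in its denominator. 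Likewise, nothing in your sketch actually connects Sylvester--Macaulay or Jouanolou determinantal presentations of $\Res$ to spanning trees or to any object counted by $n^{n-2}$; you have simply noticed that the target numerator happens to coincide with Cayley's number.

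In short, the proposal correctly frames the problem and isolates the hard step, but then restates that step (``requires a delicate tracking of structural cancellations'') rather than carrying it out. What is missing is a concrete mechanism---an explicit resolution, a specific residue identity, or a precise degree count---that forces the exponent of $\Res$ to be bounded by $[n^{n-2}/(n-1)!]$ independently of $d$. Until such a mechanism is supplied, this is an outline of where the difficulty lies, not a proof.
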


Observe that for $n=2,3$ upper bound (\ref{estim}) yields $p=1$. However, (\ref{estim}) is not sharp in general. In the two propositions below we focus on the cases $n=4$, $n=5$ and find that for sufficiently small values of $d$ estimate (\ref{estim}) can be improved. 

Indeed, if $n=4$ inequality (\ref{estim}) yields $p\le 2$, whereas in fact the following holds:

\begin{proposition}\label{n=4}\cite{I3}
For $n=4$ one has
$$
\begin{array}{ll}
p=1 & \hbox{if $3\le d\le 6$,}\\
\vspace{-0.3cm}\\
p\le 2 & \hbox{if $d\ge 7$.}
\end{array}
$$
\end{proposition}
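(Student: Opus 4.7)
The bound $p \le 2$ for $d \ge 7$ is immediate from Theorem \ref{mainoldpaper}, since for $n = 4$ one has $[n^{n-2}/(n-1)!] = [16/6] = 2$. The nontrivial content is the equality $p = 1$ in the range $3 \le d \le 6$. The inequality $p \ge 1$ holds universally by a weight computation: under $f \mapsto tf$ one has $\Hess(tf) = t^n\Hess(f)$, so $\omega_{tf} = t^{-n}\omega_f$ and hence $\mu_{i_1,\dots,i_n}(tf) = t^{-n}\mu_{i_1,\dots,i_n}(f)$. Thus each $\mu_{i_1,\dots,i_n}$ is a rational function of weighted degree $-n < 0$ and cannot be polynomial, which forces $p \ge 1$.

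For the matching upper bound $p \le 1$ in the range $3 \le d \le 6$, my plan is to exploit the factorization $\Phi = \Psi\circ\nabla$ from (\ref{decomposition}) together with the classical identity $\Res(\nabla f) = c\,\Delta(f)$ for a nonzero constant $c$; the latter holds because both sides are homogeneous of degree $n(d-1)^{n-1}$ in the coefficients of $f$ and cut out the same irreducible hypersurface. This reduces the task to bounding, for each multi-index $(i_1,\dots,i_n)$ with $\sum i_j = n(d-2)$, the pole order of the analogous $\Psi$-coefficient along the resultant divisor $\{\Res(\bbf) = 0\}$. By Proposition \ref{newmap1}, $\mu_{i_1,\dots,i_n}(\bbf)$ equals $\tilde\omega_\bbf(z_1^{i_1}\cdots z_n^{i_n})/(n(d-2))!$, where $\tilde\omega_\bbf$ is the linear functional on $\CC[z_1,\dots,z_n]_{n(d-2)}$ that vanishes on $W_\bbf$ from (\ref{subspace1}) and takes the value $1$ on $\Jac(\bbf)$. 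Applying Cramer's rule to this linear system expresses $\mu_{i_1,\dots,i_n}(\bbf)$ as a ratio of maximal minors of the multiplication matrix $M(\bbf)\co \CC[z_1,\dots,z_n]_{n(d-2)-(d-1)}^{\oplus n}\to\CC[z_1,\dots,z_n]_{n(d-2)}$ induced by $\bbf$, whose common denominator is a power of $\Res(\bbf)$.

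The main obstacle is to sharpen the resulting pole-order estimate beyond the $d$-independent bound $[n^{n-2}/(n-1)!]$ that underlies Theorem \ref{mainoldpaper}. For $n = 4$ and $d \in \{3,4,5,6\}$, the socle degree $n(d-2)$ takes the manageable values $4, 8, 12, 16$, so the matrix $M(\bbf)$ has a size amenable to direct analysis. The strategy is to verify, either uniformly via a local study of $\{\Res = 0\}$ at a generic point (where $M_\bbf$ acquires only a mild degeneration) or by four separate explicit computations, that the vanishing order along the resultant divisor of each maximal minor involved exceeds the vanishing order of $\Res$ by at most enough to produce a first-order pole in $\mu_{i_1,\dots,i_n}$. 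I expect this case-by-case verification to be the technical core of the proof; for $d \ge 7$ the numerator's higher degree leaves room for a genuinely higher pole order, and improving the corresponding bound $p \le 2$ to $p = 1$—or exhibiting an obstruction—would be a natural next step.
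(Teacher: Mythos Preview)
The paper does not supply its own proof of this proposition; it simply cites \cite{I3} and remarks that the underlying method ``appears to be computationally quite challenging to perform in full generality.'' So there is nothing in the present paper to compare against line by line.

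Your reduction is sound as far as it goes: the bound $p\le 2$ is indeed immediate from Theorem~\ref{mainoldpaper}, the inequality $p\ge 1$ follows from the homogeneity/weight argument (the paper states this just before the theorem), and passing from $\Phi$ to $\Psi$ via $\Res(\nabla f)=c\,\Delta(f)$ is the natural move. Expressing $\mu_{i_1,\dots,i_n}(\bbf)$ through Cramer's rule applied to the multiplication map is also the correct framework and is, as far as one can infer from the paper's hints, the same circle of ideas used in \cite{I3}.

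The genuine gap is that you have not carried out the step that constitutes the entire content of the proposition beyond Theorem~\ref{mainoldpaper}. You write that you ``expect this case-by-case verification to be the technical core of the proof,'' and that is exactly right---but you have not done it. What is needed is a concrete mechanism that, for $n=4$ and each $d\in\{3,4,5,6\}$, bounds the pole order of every $\mu_{i_1,\dots,i_n}$ along $\{\Res=0\}$ by~$1$. A local study at a generic point of the resultant divisor will not suffice on its own: it gives the order of pole along the generic component, but you must also rule out higher-order behaviour arising from the specific numerator minors, and you must make precise which square submatrix of the (non-square) multiplication map you are inverting and why its determinant divides $\Res$ to first order only. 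Until that computation---or a uniform argument replacing it---is actually written down, what you have is a plausible outline rather than a proof.
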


\noindent Next, for $n=5$ inequality (\ref{estim}) yields $p\le 5$, but there are in fact more precise bounds:

\begin{proposition}\label{n=5}\cite{I3}
For $n=5$ one has
$$
\begin{array}{ll}
p=1 & \hbox{if $d=3$,}\\
\vspace{-0.3cm}\\
p\le 2 & \hbox{if $d=4$,}\\
\vspace{-0.3cm}\\
p\le 3 & \hbox{if $5\le d\le 8$,}\\
\vspace{-0.3cm}\\
p\le 4 & \hbox{if $9\le d\le 50$,}\\
\vspace{-0.3cm}\\
p\le 5 & \hbox{if $d\ge 51$.}\\
\vspace{-0.3cm}\\
\end{array}
$$
\end{proposition}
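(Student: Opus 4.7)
The plan is to generalize the argument of \cite{I3} for $n=4$ (Proposition \ref{n=4}) to the case $n=5$. Since by definition
\begin{equation*}
p \;=\; \max\bigl\{\, p_{i_1,\dots,i_n} : i_1 + \dots + i_n = n(d-2) \,\bigr\},
\end{equation*}
it suffices to bound each $p_{i_1,\dots,i_n}$ separately and take the maximum.

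My first step would be to record a degree identity for $P_{i_1,\dots,i_n}$. Applying the equivariance property (\ref{equivarphitildephi}) to a scalar matrix $C=\lambda I$ gives $Cf=\lambda^{-d}f$ and $C\Phi(f)=\lambda^{n(d-2)}\Phi(f)$, whence $\Phi(\mu f)=\mu^{-n}\Phi(f)$ for $\mu\in\CC^*$; therefore each $\mu_{i_1,\dots,i_n}$ is homogeneous of degree $-n$ as a rational function on $\CC[z_1,\dots,z_n]_d$. Combined with the fact that $\Delta$ is homogeneous of degree $n(d-1)^{n-1}$ in the coefficients of $f$, this yields
\begin{equation*}
\deg P_{i_1,\dots,i_n} \;=\; p_{i_1,\dots,i_n}\,n(d-1)^{n-1} - n.
\end{equation*}

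Next I would exhibit $\mu_{i_1,\dots,i_n}(f)$ explicitly via the algorithmic reduction used in \cite{AIK} and \cite{I3}. Each monomial $z_1^{i_1}\cdots z_n^{i_n}$ is reduced, through a finite sequence of applications of the Jacobian ideal relations $f_{z_1},\dots,f_{z_n}$, to a scalar multiple of $\Hess(f)$ in the top graded component of $M_f$. Each reduction stage is implemented via Cramer's rule applied to a finite linear system and can be arranged to introduce at most one extra factor of $\Delta$ into the denominator. Counting the number of stages needed when $n=5$ produces, for each $d$, an explicit upper bound $N(5,d)$ on $p_{i_1,\dots,i_n}$ valid uniformly in the multi-index $(i_1,\dots,i_n)$. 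The remaining arithmetic step is to verify that $N(5,d)$ agrees with the values $1,2,3,4,5$ stated in the proposition across the five ranges of $d$, and to rule out $p=0$ in the base case $d=3$ by exhibiting some $f \in X_5^3$ with a nonzero coefficient $\mu_{i_1,\dots,i_n}(f)$ having a genuine pole along $\{\Delta=0\}$.

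The main obstacle will be executing the algorithmic reduction precisely enough to match the specific threshold at $d = 51$, where the bound jumps to $p \le 5$. The transitions at $d = 4$ and $d = 5$ are comparatively elementary, but controlling the multiplicity of $\Delta$ in the denominator all the way up to $d = 50$ and distinguishing it sharply from the $d = 51$ behavior requires a delicate combinatorial analysis of the structure of the graded pieces of $M_f$ in degrees up to $5(d-2)$. In particular, the argument makes essential use of the small value $n=5$ and does not appear to generalize to arbitrary $n$ without significantly more work, which is why the global bound $p \le [n^{n-2}/(n-1)!]$ of Theorem \ref{mainoldpaper} is required as a fallback for $d \ge 51$.
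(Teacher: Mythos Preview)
The paper does not actually contain a proof of this proposition: it is stated with a citation to \cite{I3} and no argument is given in the survey itself. So there is nothing in the present paper to compare your proposal against beyond the brief remark following the proposition that ``the method used in the proofs of Propositions \ref{n=4}, \ref{n=5} can be applied, in principle, to any $n\ge 2$'' and is ``computationally quite challenging.''

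That said, your proposal is not a proof but an outline of a strategy, and the outline has real gaps. The degree identity $\deg P_{i_1,\dots,i_n} = p_{i_1,\dots,i_n}\,n(d-1)^{n-1} - n$ is correct and useful, and invoking Theorem \ref{mainoldpaper} as a fallback for $d\ge 51$ is fine since $[5^3/4!]=5$. But the heart of the proposition is the sharper bounds for $d\le 50$, and here you only assert that an ``algorithmic reduction'' via Cramer's rule yields a function $N(5,d)$ matching the stated thresholds, without specifying the reduction scheme or carrying out any of the combinatorics. The specific breakpoints at $d=4$, $d=5$, $d=9$, and $d=51$ are not something one can simply read off; they come from a concrete estimate (in \cite{I3}) that you have not reproduced. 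Your sentence ``each reduction stage \dots\ can be arranged to introduce at most one extra factor of $\Delta$'' is the entire content of the proof and cannot be left as an assertion.

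One minor point: you do not need to separately rule out $p=0$ for $d=3$. The paper already observes (just before Open Problem \ref{probdegreecontravariant}) that $p>0$ always, since the weight $pd(d-1)^{n-1}-2$ of the contravariant $\Delta^p\Phi$ must be nonnegative.
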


The method used in the proofs of Propositions \ref{n=4}, \ref{n=5} can be applied, in principle, to any $n\ge 2$. However, an analysis of this kind appears to be computationally quite challenging to perform in full generality, and we did not attempt to do so systematically. We only give a word of warning that, although one may get the impression that the method always yields that $p=1$ if $d=3$, this is in fact not the case as the example of $n=6$ shows. Indeed, for $n=6$, $d=3$ the approach utilized  in the proofs of Propositions \ref{n=4}, \ref{n=5} only leads to the bound $p\le 2$.

Following the above discussion, we state a subproblem of Open Problem \ref{probdegreecontravariant}:

\begin{openprob}\label{openprobp>1}
\it Is there an example with $p>1${\rm ?}
\end{openprob}

In the next subsection we will look at the contravariant $\Delta^p\Phi$ in three special cases: (i) $n=2, d=4$, (ii) $n=2$, $d=5$, (iii) $n=d=3$. Recall that, by Theorem \ref{mainoldpaper}, in each of these cases we have $p=1$.

\subsection{The contravariant $\Delta^p \Phi$ for small values of $n$ and $d$}\label{contravarsmallnd}\hfill

\subsubsection{Binary Quartics} Let first $n=2$, $d=4$. In this case $\Delta \Phi$ is a contravariant of weight 10, degree 4 and class 4.  We have the following identity of covariants of weight 6 (see (\ref{relcovcontrav})):
\begin{equation}
\widehat {\Delta \Phi}= \frac{1}{2^7 3^3}I_2 \Hess -  \frac{1}{2^4}\Cat{\mathbf{id}},\label{covar1}
\end{equation}
where $I_2$ the relative invariant of degree $2$ considered in Subsection \ref{S:binary-quartics}, and\linebreak ${\mathbf{id}}:f\mapsto f$ the identity covariant. To verify (\ref{covar1}), it suffices to check it for the quartics $q_t$ introduced in (\ref{qt}). For these quartics the validity of (\ref{covar1}) is a  consequence of formulas (\ref{bfqt})--(\ref{form1}). 

Observe that formula (\ref{covar1}) is not a result of mere guesswork; it follows naturally from the well-known explicit description of the algebra of covariants of binary quartics. Indeed, this algebra is generated by $I_2$, the catalecticant $\Cat$, the Hessian $\Hess$ (which has degree 2 and order 4), the identity covariant ${\mathbf{id}}$ (which has degree 1 and order 4), and one more covariant of degree 3 and order 6 (see \cite[\S 145]{Ell}). Therefore $\widehat {\Delta \Phi}$, being a covariant of degree 4 and order 4, is necessarily a linear combination of $I_2\Hess$ and $\Cat{\mathbf {id}}$. The coefficients in the linear combination can be determined by computing $\Delta \Phi$, $I_2\Hess$ and $\Cat{\mathbf {id}}$ for particular nondegenerate quartics of simple form.

Formula (\ref{covar1}) yields an expression for the morphism $\Phi$ via $I_2$, $\Cat$ and $\Hess$. Namely, for $f\in X_2^4$ we obtain
\begin{equation} \label{eqn-quartic}
\Phi(f)(z_1^*,z_2^*)=\frac{1}{\Delta}\left(\frac{1}{2^7 3^3}I_2(f) \Hess(f)(z_2^*,-z_1^*)-\frac{1}{2^4}\Cat(f) f(z_2^*,-z_1^*)\right).
\end{equation}
One might hope that formula (\ref{eqn-quartic}) provides an extension of $\PP\Phi$ beyond $\PP X_2^4$. However, for $f=z_1^2z_2^2$ the second factor in the right-hand side of (\ref{eqn-quartic}) vanishes, which agrees with the fact, explained in Subsection \ref{S:binary-quartics}, that $\PP\Phi$ does not have a natural continuation to the orbit $O_1=\SL_2\cdot\,z_1^2z_2^2$.

\subsubsection{Binary Quintics} Suppose next that $n=2$, $d=5$. In this case the calculations are significantly more involved, and we will only provide a brief account of the result. In this situation $\Delta \Phi$ is a contravariant of weight 18, degree 6 and class 6. A generic binary quintic $f \in\CC[z_1,z_2]_5$ is linearly equivalent to a quintic given by the {\it Sylvester canonical equation}
\begin{equation}
f = a X^5 + b Y^5 + c Z^5,\label{sylvcanform}
\end{equation}
where $X$, $Y$, $Z$ are linear forms satisfying $X+Y+Z=0$ (see, e.g., \cite[\S 205]{Ell}). The algebra of $\SL_2$-invariants of binary quintics is generated by relative invariants of degrees 4, 8, 12, 18 with a relation in degree 36, and the algebra of covariants is generated by 23 fundamental homogeneous covariants (see \cite{Sy}), which we will write as $C_{i,j}$ where $i$ is the degree and $j$ is the order. 

For $f\in\CC[z_1,z_2]_5$ given in the form (\ref{sylvcanform}) the covariants relevant to our calculations are computed as follows:
$$
\begin{array}{l}
C_{4,0}(f,z)=a^2b^2+b^2c^2+a^2c^2-2abc(a+b+c), \\
\vspace{-0.3cm}\\
C_{8,0}(f,z)=a^2b^2c^2(ab+ac+bc),\hspace{0.8cm}  C_{5,1}(f,z)=abc(bcX+acY+abZ), \\
\vspace{-0.3cm}\\
C_{2,2}(f,z)=abXY+acXZ+bcYZ, \hspace{0.3cm} C_{3,3}(f,z)=abcXYZ, \\
\vspace{-0.3cm}\\
C_{4,4}(f,z)=abc(aX^4+bY^4+cZ^4),\hspace{0.4cm}  C_{1,5}(f,z)=f(z) =  a X^5 + b Y^5 + c Z^5,\\
\vspace{-0.3cm}\\
\displaystyle C_{2,6}(f,z)=\frac{\Hess(f)(z)}{400}=abX^3Y^3+bcY^3Z^3+acX^3Z^3.
\end{array}
$$
For instance, the discriminant can be written as
$$
\Delta=C_{4,0}^2-128\,C_{8,0}.
$$

The vector space of covariants of degree 6 and order 6 has dimension 4 and is generated by the products 
$$
\hbox{$C_{4,0}C_{2,6}$, $C_{1,5}C_{5,1}$, $C_{3,3}^2$, $C_{2,2}^3$, $C_{2,2}C_{4,4}$}
$$
satisfying the relation
$$
C_{4,0}C_{2,6} - C_{1,5}C_{5,1} + 9 C_{3,3}^2 - C_{2,2}^3 + 2 C_{2,2}C_{4,4}=0.
$$
One can then explicitly compute
$$
\widehat{\Delta \Phi}=\frac{1}{20}C_{4,0}C_{2,6}-\frac{3}{50}C_{1,5}C_{5,1}+\frac{27}{10}C_{3,3}^2-\frac{1}{10}C_{2,2}^3.
$$

\subsubsection{Ternary cubics} Finally, we assume that $n=d=3$. In this case $\Delta \Phi$ is a contravariant of weight 10, degree 9 and class 3. Recall that the algebra of $\SL_3$-invariants of ternary cubics is freely generated by the relative invariants $A_4$, $A_6$ (the Aronhold invariants considered in Subsection \ref{S:cubics}), and the ring of contravariants is generated over the algebra of $\SL_3$-invariants by the Pippian $P$ of degree $3$ and class $3$, the Quippian $Q$ of degree $5$ and class $3$, the Clebsch transfer of the discriminant of degree $4$ and class $6$, and the Hermite contravariant of degree $12$ and class $9$ (see \cite{C}). For a ternary cubic of the form (\ref{generaltercubic}), the Pippian and Quippian are calculated as follows:
$$
\hspace{-0.1cm}\begin{array}{l}
P(f)(z_1^*,z_2^*,z_3^*)  = -d(bcz_1^{*3}+acz_2^{*3}+abz_3^{*3})-(abc-4d^3)z_1^*z_2^*z_3^*,\\
\vspace{-0.1cm}\\
Q(f)(z_1^*,z_2^*,z_3^*) = (abc-10d^3)(bcz_1^{*3}+acz_2^{*3}+abz_3^{*3})-6d^2(5abc+4d^3)z_1^*z_2^*z_3^*.
\end{array}
$$
Since any contravariant of degree 9 and class 3 is a linear combination of $A_6 P$ and $A_4 Q$, it is easy to compute
\begin{equation}
\Delta \Phi = -\frac{1}{36}A_6 P - \frac{1}{27}A_4 Q.\label{contravar3}
\end{equation}
The above expression can be verified directly by applying it to the cubics $c_t$ defined in (\ref{ct}) and using formulas (\ref{bfct}), (\ref{discrtercub}), (\ref{form11}). 

Identity (\ref{contravar3}) provides an expression for $\Phi$ in terms of $A_4$, $A_6$, $P$ and $Q$. Namely, on $X_3^3$ we have
\begin{equation}
\Phi = -\frac{1}{\Delta}\left(\frac{1}{36}A_6 P + \frac{1}{27}A_4 Q\right).\label{newexpr1}
\end{equation}
One might think that formula (\ref{newexpr1}) yields a continuation of $\PP\Phi$ beyond $\PP X_3^3$. However, for $f=z_1z_2z_3$ the second factor in the right-hand side of (\ref{newexpr1}) is zero, which illustrates the obstruction to extending $\PP\Phi$ to the orbit ${\rm O}_1=\SL_3\cdot\,z_1z_2z_3$ discussed in Subsection \ref{S:cubics}.

\end{document}